\newtheorem{theorem}{Theorem}[section]
\newtheorem{lemma}[theorem]{Lemma}
\newtheorem{prop}[theorem]{Proposition}
\newtheorem{cor}[theorem]{Corollary}
\theoremstyle{definition}
\newtheorem{defi}[theorem]{Definition}
\newtheorem{example}[theorem]{Example}
\theoremstyle{remark}
\newtheorem{remark}[theorem]{Remark}
\numberwithin{equation}{section}
\newcommand{\rr}{{\mathbb R}}
\newcommand{\eqd}{\stackrel{d}{=}}
\newcommand{\diag}{\operatorname{diag}}
\newcommand{\plusmax}{\begin{picture}(10,5)
\put(1,0){$\vee$}
\put(0.5,5){$+$}
\end{picture}
}
\newcommand{\plusmin}{\begin{picture}(10,5)
\put(1,0){$\wedge$}
\put(0.5,5){$+$}
\end{picture}
}
\newcommand{\overbar}[1]{\mkern 1.5mu\overline{\mkern-1.5mu#1\mkern-1.5mu}\mkern 1.5mu}
\newcommand{\bunderline}[1]{\mkern 1.5mu\underline{\mkern-1.5mu#1\mkern-1.5mu}\mkern 1.5mu }
\newcommand{\Raum}{\mathbb{R}_{+} \times \bunderline{\overbar{\mathbb{R}}}}
\newcommand{\raum}{\mathbb{R}_{+}}
\newcommand{\Ru}{\bunderline{\mathbb{R}}}
\newcommand{\Rpmu}{\bunderline{\overbar{\mathbb{R}}}}
\newcommand{\Rplus}{\mathbb{R}_{+}}
\newcommand{\Rplushochzwei}{\mathbb{R}_{+}^2}
\newcommand{\Rkomp}{\overbar{\mathbb{R}}}
\newcommand{\ind}[1]{\mathds{1}_{#1}}
\newcommand{\N}{\mathbb{N}}
\newcommand{\Konvergenzninfty}{\xrightarrow[n \rightarrow \infty]{}}
\newcommand{\wKonvergenz}{\xlongrightarrow{w}}
\newcommand{\vageKonvergenz}{\xlongrightarrow{v'}}
\newcommand{\WKonvergenz}{\xLongrightarrow[n \rightarrow \infty]{}}
\newcommand{\wKonvergenzninfty}{\xrightarrow[n \rightarrow \infty]{w}}
\newcommand{\vKonvergenzninfty}{\xrightarrow[n \rightarrow \infty]{v'}}
\newcommand{\intnu}{\int_{0}^{\infty}}
\newcommand{\limn}{\lim_{n \rightarrow \infty}}
\newcommand{\WRaum}{\mathcal{M}^1(\Raum)}
\newcommand{\LargerCdot}{\raisebox{-0.25ex}{\scalebox{1.2}{$\cdot$}}}
\begin{document}

\title[Joint sum/max stability]{On joint sum/max stability and sum/max domains of attraction}
\author{Katharina Hees and Hans-Peter Scheffler} 
\address{Katharina Hees, Institut f\"ur medizinische Biometrie und Informatik, Universit\"at Heidelberg, 69126 Heidelberg, Germany} 
\email{hees@imbi-uni-heidelberg.de}
\address{Hans-Peter Scheffler, Department Mathematik, Universit\"at Siegen, 57072 Siegen, Germany} 
\email{Scheffler@mathematik.uni-siegen.de}
\keywords{sum stable, max stable, joint limit theorem} 
\subjclass[2010]{60E07, 60E10, 60F05} 

\begin{abstract}
Let $(W_i,J_i)_{i \in \mathbb{N}}$ be a sequence of i.i.d. $[0,\infty) \times \rr$-valued random vectors. Considering the partial sum of the first component and the corresponding maximum of the second component, we are interested in the limit distributions that can be obtained under an appropriate scaling. In the case that $W_i$ and $J_i$ are independent, the joint distribution of the sum and the maximum is the product measure of the limit distributions of the two components. But if we allow dependence between the two components, this dependence can still appear in the limit, and we need a new theory to describe the possible limit distributions. This is achieved via harmonic analysis on semigroups, which can be utilized to characterize the scaling limit distributions and describe their domains of attraction. \end{abstract}
\maketitle

\section{Introduction}

Limit theorems for partial sums and partial maxima of sequences of i.i.d.\ random variables have a very long history and form the foundation of many applications of probability theory and statistics. The theories, but not the methods, in those two cases parallel each other in many ways. In both cases the class of possible limit distributions, that are sum-stable and max-stable laws, are well understood. Moreover, the characterization of domains of attraction is in both cases based on regular variation. See e.g. \cite{gnedenko, feller, thebook, resnick} to name a few. 

The methods used in the analysis in those two cases appear, at least at the first glance, to be completely different. In the sum case one usually uses Fourier- or Laplace transform methods, whereas in the max case the distribution function (CDF) is used. However, from a more abstract point of view these two methods are almost identical. They are both harmonic analysis methods on the plus resp.\ the max-semigroup. 

Surprisingly, a thorough analysis of the joint convergence of the sum and the maximum of i.i.d.\ random vectors, where the sum is taken in the first coordinate and the max in the second coordinate has never been carried out. Of course, if the components of the random vector are independent, one can use the classical theories componentwise and get joint convergence. To our knowledge, the only other case considered is the case of complete dependence where the components are identical, see \cite{chow1978sum}.

The purpose of this paper is to fill this gap in the literature and to present a  theory that solves this problem in complete generality. Moreover, there is a need for a general theory describing the dependence between the components of the limit distributions of sum/max stable laws. For example, in \cite{silvestrov} on page 1862 it is explicitly asked how to describe such limit distributions. Moreover, there are various stochastic process models and their limit theorems, that are constructed from the sum of non-negative random variables $W_i$, interpreted as waiting times between events of magnitude $J_i$, which may describe the jumps of a particle, in particular the {\it continuous time random maxima processes} studied in \cite{m3_stoev,shumer}, or the {\it shock models} studied in \cite{shan1,shan2,shan3,anderson,pancheva}. In those papers it is either assumed that the waiting times $W_i$ and the jumps $J_i$ are independent or asymptotically independent, meaning that the components of the limiting random vector are independent.

Motivated by these applications, in this paper we only consider the case of non-negative summands. More precisely, let
 $(W_i,J_i)_{i \in \mathbb{N}}$ be a sequence of i.i.d. $\Rplus \times \rr$-valued random variables. The random variables $W_i$ and $J_i$  can be dependent. Furthermore we define the partial sum 
\begin{align}\label{eq11}
S(n):=\sum_{i=1}^n W_i \hspace{0.5cm}
\end{align}
and the partial maximum
\begin{align} \label{eq11b}
M(n):=\bigvee_{i=1}^n J_i :=\max\{J_1,\dots,J_n\}. 
\end{align}
Assume now that there exist constants  $a_n,b_n>0$ and $c_n\in \mathbb{R}$, such that
\begin{eqnarray}
(a_n S(n),b_n(M(n)-c_n))\Rightarrow (D,A) \text{ as } n \longrightarrow \infty, \label{AnnahmeEinleitung}
\end{eqnarray}
where $A$ and $D$ are non-degenerate. We want to answer the following questions:
\begin{itemize}
\item[(i)] How can we characterize the joint distribution of $(D,A)$ in \eqref{AnnahmeEinleitung}? 
\item[(ii)] How can we describe the dependence between $D$ and $A$?
\item[(iii)] Are there necessary and sufficient conditions on $(W,J)$, such that the convergence in \eqref{AnnahmeEinleitung} is fulfilled?
\end{itemize}
Observe that by the classical theory of sum- or max-stability it follows by projecting on either coordinate in \eqref{AnnahmeEinleitung} that $D$ has a $\beta$ sum-stable distribution for some $0<\beta<1$ and $A$ has one of the three extreme value distributions. 
To answer all these questions we use Harmonic Analysis on the sum/max semigroup and derive a theory that subsumes both the classical theories of sum-stability or max-stability, respectively.  

This paper is organized as follows: In Section 2, by applying results from abstract harmonic analysis on semigroups to the sum/max semigroup defined by
\begin{align}\label{eq13}
(x_1,t_1)  \plusmax (x_2,t_2) := (x_1+x_2,t_1 \vee t_2)
\end{align}
for all $(x_1,t_1),(x_2,t_2) \in \Rplus \times \rr$, we develop the basic machinery. We will give a L\'evy-Khintchine type formula for sum/max infinitely divisible laws based on a modified Laplace transform on the semigroup as well as convergence criteria for triangular arrays. These methods are then used in Section 3 to answer questions (i), (ii) and (iii) in the $\alpha$-Fr\'echet case, where we additionally assume that  $A$ in \eqref{AnnahmeEinleitung} has an $\alpha$-Fr\'echet distribution. The general case 
then follows by transforming the second component in \eqref{AnnahmeEinleitung} to the $1$-Fr\'echet case.   In Section 4 we present some examples showing the usefulness of our results and methods. Technical proofs are given in the appendix.

\section{Harmonic Analysis on semigroups}

Even though the random variables $J_i$ in \eqref{eq11} are real valued, in extreme value theory it is more natural to consider random variables with values in the two-point compactification $\Rpmu=[-\infty,\infty]$. Observe that $-\infty$ is the neutral element with respect to the max operation. The framework for analyzing the convergence in \eqref{AnnahmeEinleitung} is the Abelian topological semigroup $(\Rplus\times\Rpmu, \plusmax)$, where $\plusmax$ is defined in \eqref{eq13}. Observe that the neutral element is $(0,-\infty)$. The semigroup operation $\plusmax$ naturally induces a convolution $\circledast$ on $\mathcal{M}^{b}(\Raum)$, the set of bounded measures. Indeed, let  $\Pi((s_1,y_1),(s_2,y_2)):=(s_1+s_2,y_1 \vee y_2)$.  For $\mu_1,\mu_2\in\mathcal{M}^{b}(\Raum)$ we define $\mu_1\circledast\mu_2=\Pi(\mu_1\otimes\mu_2)$, where $\mu_1\otimes\mu_2$ denotes the product measure. Then we have for independent $\Raum$-valued random vectors $(W_1,J_1)$ and $(W_2,J_2)$ that
\begin{align*}
P_{(W_1,J_1)} \circledast P_{(W_2,J_2)} = P_{(W_1,J_1) \tiny{\plusmax (W_2,J_2)}}=P_{(W_1+W_2,J_1 \vee J_2)}.
\end{align*}
The natural transformation on the space of bounded measures for the usual convolution that transforms the convolution into a product, is the Fourier- or Laplace transform. We will now introduce a similar concept on our semigroup $(\Rplus\times\Rpmu, \plusmax)$ and present its basic properties. In order to do so we first recall some basic facts about Laplace transforms on semigroups.

On an arbitrary semigroup $S$ a generalized Laplace transform $\mathcal{L}: \mu \rightarrow \mathcal{L}(\mu)$ is defined by 
\begin{align*}
\mathcal{L}(\mu)(s)=\int_{\bf{\hat{S}}} \rho(s) d\mu(\rho), \ s \in S,
\end{align*}
where $\bf\hat{S}$ is the set of all bounded semicharacters on $S$ and $\mu\in\mathcal M^b(\bf\hat{S})$ (see 4.2.10 in \cite{Berg}). A semicharacter on $(S,\circ)$ is a function $\rho: S \rightarrow \mathbb{R}$ with the properties
\begin{itemize}
\item[(i)] $\rho(e)=1;$
\item[(ii)] $\rho(s\circ t)=\rho(s) \rho(t) \text{ for } s,t, \in S.$
\end{itemize}
We now consider the topological semigroup $S:=(\Rplus \times\overbar{\rr},\plusmin)$ with neutral element $e=(0,\infty)$, where $\plusmin$ is defined as
\begin{align}
(x_1,t_1)  \plusmin (x_2,t_2) := (x_1+x_2,t_1 \wedge t_2)
\end{align}
for all $(x_1,t_1),(x_2,t_2) \in \Rplus \times \overbar{\rr}$. The set of bounded semicharacters on $S$ is given by
\begin{align}
\hat{\textbf{S}}&=\Big\{e^{-t\LargerCdot}\ind{\left\{\infty\right\}}(\LargerCdot),e^{-t\LargerCdot}\ind{\left[x,\infty\right]}(\LargerCdot),e^{-t\LargerCdot}\ind{\left(x,\infty\right]}(\LargerCdot)\Big| t \in \left[0,\infty\right], x \in \left[-\infty,\infty\right) \Big\}\label{MengeallerHG}
\end{align}
with $\infty \cdot s = \infty$ for $s>0$ and $0 \cdot \infty=0$, hence for $t=\infty$ we get $e^{-t \LargerCdot}=\ind{\left\{\infty\right\}}(\LargerCdot)$.  We consider only a subset of $\hat{\textbf{S}}$, which we denote by $\tilde{\textbf{S}}$:
\begin{align}
\tilde{\textbf{S}}&:=\Big\{\rho_{t,x}(s,y):=e^{-ts}\ind{[-\infty,y \,]}(x) \Big| t \in \left[0,\infty\right), x \in \left[-\infty,\infty\right]\Big\}.
\end{align}
This is again a topological semigroup under pointwise multiplication and the neutral element is the constant function $\textbf{1}$. It is easy to see that this set of semicharacters together with the pointwise multiplication is isomorph to $(\Rplus \times \Rpmu,\plusmax)$. Hence, with a little abuse of notation, by identifying measures on $\tilde{\textbf{S}}$  with measures on $\Raum$  we can define a Laplace transform for measures on $(\Rplus \times \Rpmu,\plusmax)$.

\begin{defi}
For bounded measures $\mu$ on $\Raum$, the \textit{CDF-Laplace transform} (short: \textit{C-L transform}, or \textit{C-L function}) 
$\mathcal{L}: \mu \rightarrow \mathcal{L}(\mu)$ is given by
\begin{align}
\mathcal{L}(\mu)(s,y):=\int_{\Rplus \times \Rpmu} e^{-st} \ind{\left[-\infty,y \right]}(x) \mu(dt,dx), \quad (s,y) \in \Rplus \times \Rkomp. \label{C-L-Transformierte}
\end{align} 
\end{defi}

Observe that setting $s=0$ results in the CDF of the second component, whereas setting $y=\infty$ results in the usual Laplace transform of the first component.
That is, if we consider a random vector $(W,J)$ on $\Raum$ with joint distribution $\mu$ and put $s=0$ resp. $y=\infty$ we get 
\begin{align}
&\mathcal{L}(\mu)(0,y)=\mu(\mathbb{R_{+}}\times [-\infty,y] )=P\left\{J \leq y\right\}=F_J(y) \ \text{ resp.}\\
&\mathcal{L}(\mu)(s,\infty)=\int_0^{\infty}e^{-st}P_{(W,J)}(dt,\mathbb{R_{+}})=\mathbb{E}[e^{-sW}]=\tilde{P}_W(s),
\end{align}
where $\tilde{P}_W$ is the Laplace transform of $P_W$ and $F_J$ the distribution function of $J$, which explains the name CDF-Laplace transform.\\

In the following we collect  some important properties of the C-L transform needed for our analysis.

\begin{lemma}\label{lemmaCL} 
A normalized function $\varphi$ on $(\Rplus \times \Rkomp,\plusmin)$ (meaning that $\varphi(0,\infty)=1$), is the C-L transform of a probability measure $\mu$ on $\Rplus \times \Rpmu$ if and only if $\varphi$ is positive semidefinite, $\varphi(0,y)$ is the distribution function of a probability measure on $\Rpmu$ and $\varphi(s,\infty)$ the Laplace transform of a probability measure on $\Rplus$. 
\end{lemma}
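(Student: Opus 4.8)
I read this as a Bochner--Bernstein type representation theorem for the semigroup $(\Rplus\times\Rkomp,\plusmin)$, so the plan is to prove the two implications separately: the forward direction by a direct computation, and the converse by reconstructing $\mu$ from $\varphi$ through a one--variable Bernstein (Laplace) argument in the first coordinate, glued together by a monotonicity argument in the second coordinate. The inputs are that each $\rho_{t,x}$ is a normalized semicharacter and that $(\Rplus\times\Rkomp,\plusmin)$ is the product of $(\Rplus,+)$ and $(\Rkomp,\wedge)$, so that positive semidefiniteness restricted to a fixed second coordinate is positive semidefiniteness on $(\Rplus,+)$.

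For necessity I would only compute. If $\varphi=\mathcal{L}(\mu)$ with $\mu$ a probability measure, then since $\rho_{t,x}(u\plusmin v)=\rho_{t,x}(u)\rho_{t,x}(v)$ for the semicharacters $\rho_{t,x}$, for points $u_1,\dots,u_n\in\Rplus\times\Rkomp$ and reals $c_1,\dots,c_n$ one has $\sum_{i,j}c_ic_j\varphi(u_i\plusmin u_j)=\int\bigl(\sum_i c_i\rho_{t,x}(u_i)\bigr)^2\,\mu(dt,dx)\ge0$, which is positive semidefiniteness. Putting $s=0$ gives $\varphi(0,y)=\mu(\Rplus\times[-\infty,y])=F_J(y)$, a distribution function, and putting $y=\infty$ gives $\varphi(s,\infty)=\int e^{-st}\,\mu(dt,\Rpmu)$, a Laplace transform of a probability measure on $\Rplus$. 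This direction is routine.

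For the converse I would assume $\varphi$ normalized, positive semidefinite, and satisfying the two marginal conditions, and proceed as follows. Testing with a single point gives $\varphi\ge0$. Next, for $y\le y'$ I would test positive semidefiniteness on the $2n$ points $(s_i,y),(s_i,y')$; since $(s_i,y)\plusmin(s_j,y')=(s_i+s_j,y)$, the associated matrix has block form $\left(\begin{smallmatrix}A&A\\A&B\end{smallmatrix}\right)$ with $A_{ij}=\varphi(s_i+s_j,y)$ and $B_{ij}=\varphi(s_i+s_j,y')$, and evaluating the quadratic form on $(u,-u)$ yields $B-A\succeq0$. The diagonal entries give $\varphi(2s,y)\le\varphi(2s,y')$ for all $s$, so $\varphi$ is nondecreasing in $y$, whence $\varphi\le\varphi(\cdot,\infty)\le\varphi(0,\infty)=1$ and $\varphi$ is bounded. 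Fixing $y$ and testing only on points with constant second coordinate $y$ shows $s\mapsto\varphi(s,y)$ is positive semidefinite and bounded on $(\Rplus,+)$, so by the integral representation of bounded positive semidefinite functions on $(\Rplus,+)$ (equivalently Hausdorff--Bernstein--Widder, see \cite{Berg}) there is a unique bounded measure $\sigma_y$ on $[0,\infty]$ with $\varphi(s,y)=\int_{[0,\infty]}e^{-st}\,\sigma_y(dt)$ and $\sigma_y([0,\infty])=\varphi(0,y)$. The block inequality $B-A\succeq0$ says $s\mapsto\varphi(s,y')-\varphi(s,y)$ is again positive semidefinite on $(\Rplus,+)$, so by uniqueness of the representing measure $\sigma_{y'}-\sigma_y\ge0$; that is, $y\mapsto\sigma_y$ is nondecreasing. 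Condition (3) forces $\sigma_\infty$ to be a probability measure carried by $[0,\infty)$, and $\sigma_y\le\sigma_\infty$ then shows no $\sigma_y$ charges $\{\infty\}$ and $\sigma_y([0,\infty))=F_J(y)$. Finally $0\le\varphi(s,y')-\varphi(s,y)\le\varphi(0,y')-\varphi(0,y)$ together with right--continuity of $F_J$ gives right--continuity of $y\mapsto\sigma_y([0,t])$, so the monotone, right--continuous measure--valued map $y\mapsto\sigma_y$ defines a unique probability measure $\mu$ on $\Raum$ via $\mu(B\times[-\infty,y])=\sigma_y(B)$, and by construction $\mathcal{L}(\mu)(s,y)=\int_{x\le y}e^{-st}\mu(dt,dx)=\int e^{-st}\sigma_y(dt)=\varphi(s,y)$.

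The hard part is the converse, and within it the consistency step: the one--variable Bernstein theorem only produces, for each fixed $y$, a representing measure $\sigma_y$, and the real work is to show that these fit together into a single bivariate law. This is exactly what the $2\times2$ block consequence $B-A\succeq0$ of positive semidefiniteness provides, namely monotonicity of $y\mapsto\sigma_y$. Conditions (2) and (3) are then precisely what prevents mass from leaking to $t=\infty$ or escaping at the boundary in $y$ and what supplies right--continuity; I would also be careful that the ``closed'' semicharacters $\ind{[-\infty,y]}$ appearing in the C--L transform correspond to the right--continuous (rather than left--continuous) version of the second marginal, which is why the squeeze by $F_J$ is the natural way to obtain right--continuity of $y\mapsto\sigma_y$.
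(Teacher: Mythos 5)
Your proof is correct, and for the converse (the substantive direction) it takes a genuinely different route from the paper's. The paper invokes the abstract representation theorem for bounded positive semidefinite functions (Theorem 4.2.8 in \cite{Berg}) \emph{once, on the two-parameter semigroup} $(\Rplus\times\Rkomp,\plusmin)$: this yields a measure on the space of \emph{all} bounded semicharacters \eqref{MengeallerHG}, which besides $e^{-t\,\LargerCdot}\ind{[x,\infty]}$ also contains the ``open'' semicharacters $e^{-t\,\LargerCdot}\ind{(x,\infty]}$ and those with $t=\infty$; the remaining work there is bookkeeping, using right-continuity of $\varphi(0,\cdot)$ to discard or absorb the open-interval part and continuity of $\varphi(\cdot,\infty)$ to kill mass at $t=\infty$. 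You instead apply the one-dimensional representation theorem fiber-wise in $s$ for each fixed $y$, and supply the two-dimensional structure yourself: the block consequence $B-A\succeq 0$ of positive semidefiniteness gives monotonicity $\sigma_y\le\sigma_{y'}$ of the representing measures, and the total-variation squeeze $0\le\sigma_{y'}(B)-\sigma_y(B)\le F_J(y')-F_J(y)$ gives right-continuity, after which the family glues to a bivariate law via the standard Stieltjes/Carath\'eodory construction applied to the $2$-increasing, right-continuous function $(t,y)\mapsto\sigma_y([0,t])$. What each buys: the paper's argument is shorter once the full semicharacter space of the product semigroup has been identified, while yours needs only the semicharacters of $(\Rplus,+)$, makes completely explicit where hypotheses (2) and (3) enter (right-continuity of the fibers; no mass escaping to $t=\infty$), and, as a bonus, actually \emph{derives} boundedness of $\varphi$ from monotonicity in $y$ --- a point the paper's converse simply asserts, although boundedness is needed to apply Berg's theorem. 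Two small cautions: the classical Hausdorff--Bernstein--Widder theorem requires continuity (or at least measurability) of the function, which you have not assumed; what you actually need --- and what \cite{Berg} provides --- is the representation of bounded positive semidefinite functions on the semigroup $(\Rplus,+)$ with no regularity hypothesis, valid because the bounded semicharacters of $(\Rplus,+)$ are exactly $\{e^{-t\,\LargerCdot}:t\in[0,\infty]\}$ (so your parenthetical should lean on Berg, not on HBW). And the final gluing step, while standard, deserves one explicit sentence verifying the rectangle inequality $\mu(B\times(y,y'])=\sigma_{y'}(B)-\sigma_y(B)\ge 0$ and the treatment of possible atoms of the second marginal at $\pm\infty$, which your construction on $\Rpmu$ does accommodate.
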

\begin{proof}See Appendix.
\end{proof}

\begin{prop} \label{EigenschaftenC-L}
Let $\mu_1, \mu_2,\mu \in \mathcal{M}^{1} (\Raum)$ and $\alpha,\beta \in \mathbb{R}$. Then
\begin{itemize}
\item[(a)] $\mathcal{L}\left(\alpha\mu_1+\beta\mu_2\right)(s,y) = \alpha\mathcal{L}(\mu_1)(s,y) +\beta\mathcal{L}(\mu_2)(s,y)$ for all $(s,y) \in \Rplus \times \Rkomp$.
\item[(b)] $\mathcal{L}(\mu_1 \circledast \mu_2)(s,y)=\mathcal{L}(\mu_1)(s,y) \cdot \mathcal{L}(\mu_2)(s,y)$ for all $(s,y) \in \Rplus \times \Rkomp$. \label{Faltungseigenschaft}
\item[(c)] $\mu_1=\mu_2$ if and only if $\mathcal{L}(\mu_1)(s,y)=\mathcal{L}(\mu_2)(s,y)$ \label{Eindeutigkeit} for all $(s,y) \in \Rplus \times \Rkomp$. 
\item[(d)] It is $0 \leq \mathcal{L}(\mu)(s,y) \leq 1$ for all $(s,y) \in \Rplus \times \Rkomp$. \label{LZwischen0und1}
\end{itemize} 
\end{prop}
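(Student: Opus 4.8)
The plan is to verify each of the four claims (a)--(d) by working directly from the integral definition \eqref{C-L-Transformierte} and the basic properties of the convolution $\circledast$ already established in the excerpt. None of these are deep; they are the semigroup analogues of the standard linearity, convolution-to-product, uniqueness, and boundedness properties of the Laplace/Fourier transform. I would treat (a) and (d) as essentially immediate, spend a little more care on (b), and treat (c) as the one genuine point requiring an external input.

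For part (a), linearity is inherited directly from linearity of the integral: since $\mathcal{L}(\mu)(s,y)=\int e^{-st}\ind{[-\infty,y]}(x)\,\mu(dt,dx)$ is an integral against the fixed kernel $\rho_{s,y}(t,x)=e^{-st}\ind{[-\infty,y]}(x)$, we have $\int \rho_{s,y}\,d(\alpha\mu_1+\beta\mu_2)=\alpha\int\rho_{s,y}\,d\mu_1+\beta\int\rho_{s,y}\,d\mu_2$, which is exactly the claim. For part (d), boundedness follows because the integrand satisfies $0\le e^{-st}\ind{[-\infty,y]}(x)\le 1$ pointwise for $(s,y)\in\Rplus\times\Rkomp$, so integrating against a probability measure $\mu$ gives a value in $[0,1]$.

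Part (b) is the key multiplicative identity and the one I would write out. The approach is to use the definition $\mu_1\circledast\mu_2=\Pi(\mu_1\otimes\mu_2)$ with $\Pi((t_1,x_1),(t_2,x_2))=(t_1+t_2,x_1\vee x_2)$, so that by the change-of-variables/pushforward formula
\begin{align*}
\mathcal{L}(\mu_1\circledast\mu_2)(s,y)
=\int\!\!\int e^{-s(t_1+t_2)}\ind{[-\infty,y]}(x_1\vee x_2)\,\mu_1(dt_1,dx_1)\,\mu_2(dt_2,dx_2).
\end{align*}
The point is that the kernel $\rho_{s,y}$ is a semicharacter, i.e. multiplicative for $\plusmin$: $e^{-s(t_1+t_2)}=e^{-st_1}e^{-st_2}$ and, crucially, $\ind{[-\infty,y]}(x_1\vee x_2)=\ind{[-\infty,y]}(x_1)\,\ind{[-\infty,y]}(x_2)$ since $x_1\vee x_2\le y$ holds exactly when both $x_1\le y$ and $x_2\le y$. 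Substituting this factorization and applying Fubini (legitimate because the integrand is bounded and nonnegative and $\mu_1,\mu_2$ are finite) lets the double integral split into the product of the two single integrals, yielding $\mathcal{L}(\mu_1)(s,y)\cdot\mathcal{L}(\mu_2)(s,y)$.

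Part (c) is where the real content lies, since the ``if'' direction is a uniqueness statement: equal C-L transforms force equal measures. The forward implication is trivial. For the converse, I would appeal to uniqueness of the generalized Laplace transform on semigroups from \cite{Berg}: since $\tilde{\textbf{S}}$ separates the points of $\Rplus\times\Rpmu$ (indeed it was noted that $\tilde{\textbf{S}}$ is isomorphic to the semigroup itself) and the C-L transform is precisely the generalized Laplace transform against this family of semicharacters, the standard uniqueness theorem (4.2.10 and the attendant results in \cite{Berg}) applies. Concretely, one can also argue directly: setting $s=0$ recovers the CDF $F_J$ of the second marginal on $\Rpmu$, so the second marginals agree; fixing $y$ and varying $s$ recovers, up to the mass assigned to $\{x\le y\}$, the Laplace transform of the first component restricted to that half-line, and ordinary Laplace-transform uniqueness together with the standard extension from half-lines to a $\pi$-system generating the Borel $\sigma$-algebra pins down $\mu$. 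The main obstacle is thus purely to justify that the restricted-and-transformed family $\{\rho_{s,y}\}$ is rich enough to determine the measure; once the separation/uniqueness result from \cite{Berg} is invoked this is immediate, so I would lean on that reference rather than reproving it.
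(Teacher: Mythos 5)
Your proposal is correct, and in substance it follows the paper's (very terse) proof, with two differences worth noting. For (b) the paper disposes of the claim in one line: after identifying measures on $\Raum$ with measures on the semicharacters $\tilde{\textbf{S}}$, the convolution $\circledast$ becomes the image measure under pointwise multiplication $T(\rho_1,\rho_2)=\rho_1\rho_2$. Your Fubini computation is exactly this fact unwound on the $(t,x)$-side, the key identity being that $\rho_{s,y}$ is multiplicative over $\plusmax$, i.e. $\rho_{s,y}\bigl((t_1,x_1)\plusmax(t_2,x_2)\bigr)=\rho_{s,y}(t_1,x_1)\,\rho_{s,y}(t_2,x_2)$; note this is multiplicativity in the $(t,x)$-variables over $\plusmax$, not over $\plusmin$ as you wrote, though the two displayed identities you actually use are the correct ones. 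For (c) the paper simply cites Theorem 4.2.8 of Berg--Christensen--Ressel (the uniqueness theorem; 4.2.10, which you cite, is only the definition of the generalized Laplace transform), whereas you additionally supply a self-contained argument: for fixed $y$ the function $s\mapsto\mathcal{L}(\mu)(s,y)$ is the classical Laplace transform of the finite measure $B\mapsto\mu(B\times[-\infty,y])$ on $\Rplus$, so classical Laplace uniqueness identifies these measures for every $y$, and the sets $B\times[-\infty,y]$ form a $\pi$-system generating the Borel $\sigma$-algebra of $\Raum$ (containing the whole space, so the Dynkin argument applies to probability measures). This elementary route is a genuine alternative that bypasses the semigroup machinery entirely. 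One caution: your passing remark that $\tilde{\textbf{S}}$ ``separates points'' does not by itself yield measure-determination --- the kernels $\rho_{s,y}$ are not continuous, so no Stone--Weierstrass type reasoning is available --- and it is the $\pi$-system argument (or Berg's theorem) that carries the weight there.
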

\begin{proof}
Property (a) is obvious. The proof of (b) is also straightforward, because the convolution is the image measure under the mapping $T: \hat{\textbf{S}} \times \hat{\textbf{S}} \rightarrow \hat{\textbf{S}}$ with $T(\rho_1,\rho_2):=\rho_1 \rho_2$.
Property (c) follows immediately from Theorem 4.2.8 in \cite{Berg} and (d) is obvious. 
\end{proof}

The Laplace transform is a very useful tool for proving weak convergence of sums of i.i.d.\ random variables using the so called Continuity Theorem.  The next theorem is the analogue of the Continuity Theorem for the Laplace transform in the sum/max case.

\begin{theorem}[Continuity Theorem for the C-L transform] \label{Stetigkeitssatz}\ \\
Let $\mu_n,\mu \in \mathcal{M}^1(\Raum )$ for all $n \in \mathbb{N}$. Then we have:
\begin{itemize}
\item[(a)] If $\mu_n \xrightarrow[]{w}\mu$, then $\mathcal{L}(\mu_n)(s,y) \xrightarrow[]{}\mathcal{L}(\mu)(s,y)$ for all $(s,y) \in \Rplus \times \Rkomp$ in which $\mathcal{L}(\mu)$ is continuous. (This is the case for all but countably many $y \in \Rkomp$.)
\item[(b)] If $\mathcal{L}(\mu_n)(s,y) \xrightarrow[]{} \varphi(s,y)$ in all but countable many $y \in \Rkomp$ and $\lim_{s\downarrow 0}\varphi(s,\infty)=1$, then there exists a measure $\mu \in \mathcal{M}^1(\Raum)$ with $ \mathcal{L}(\mu)=\varphi$ and $\mu_n \xrightarrow[]{w} \mu$.
\end{itemize}
\begin{proof}
(a): With the Portmanteau Theorem (see for example \cite{thebook}, Theorem 1.2.2) 
we know that $\int f(t,x) \mu_n(dt,dx) \rightarrow \int f(t,x) \mu(dt,dx)$ as $n \rightarrow \infty$ for all real-valued, bounded functions $f$ on $\Raum \text{ with } \mu\left(\textnormal{Disc}(f)\right)=0$, where $\textnormal{Disc}(f)$ is the set of discontinuities of $f$. If we choose $f$ as $f_{s,y}(t,x):=e^{-st}\ind{[-\infty,y]}(x)$ it follows that 
\[\mathcal{L}({\mu}_n) (s,y) \longrightarrow \mathcal{L}(\mu)(s,y) \text{ as } n\rightarrow \infty\] 
for all $(s,y) \in \Rplus \times \Rkomp$ in which $\mathcal{L}(\mu)$ is continuous. Because $\textnormal{Disc}(f_{s,y})=\Rplus \times \left\{y\right\}$ and $\mu(\Rplus \times \cdot)$ has as probability measure at most countable many atoms, $\mu(\textnormal{Disc}(f_{s,y})) \neq 0$ for at most countable many $y\in \Rkomp$.\\
(b): Let $ (\mu_n)_{n \in \N} $ be a sequence of probability measures on $\Raum$. With Helly's Selection Theorem (see \cite{feller}, Theorem 8.6.1) we know that for all subsequences $(n_k)_{k \in \N}$ there exists another subsequence $(n_{k_l})_{l \in \mathbb{N}}$ and a measure $\mu \in \mathcal{M}^{\leq 1}(\Raum)$ such that \[\mu_{n_{k_l}} \xlongrightarrow[]{v} \mu \text{ as } l \rightarrow \infty.\]
Then $\mu$ is a subprobability measure, i.e. $\mu(\Raum)\leq 1$. With (a) it follows that \[\mathcal{L}({\mu}_{n_{k_l}})(s,y) \underset{}{\longrightarrow} \mathcal{L}({\mu})(s,y) \text{ as } l \rightarrow \infty\] for all $(s,y)$ where $\mathcal{L}({\mu})$ is continuous. By assumption we know that \[\mathcal{L}({\mu}_{n_{k_l}})(s,y) \underset{}{\longrightarrow} \varphi(s,y) \text{ as } l \rightarrow \infty\] pointwise in all but countable many $y\in \Rkomp$. Then it follows because of uniqueness of the limit that $\mathcal{L}({\mu})(s,y)=\varphi(s,y)$ for all subsequences $(n_k)_{k \in \N}$. So the limits are equal for all subsequences $(n_k)_{k \in \N}$. Because of the uniqueness of the C-L transform it follows that \[\mu_n \xlongrightarrow[]{v} \mu \text{ as } n \rightarrow \infty\] where $\mu(\Raum)\leq 1$. Because of the assumption $\lim_{s\downarrow 0}\varphi(s,\infty)=1$  we get
\begin{align*}
1&=\lim_{s \downarrow 0} \varphi(s,\infty)=\lim_{s \downarrow 0} \mathcal{L}(\mu)(s,\infty)=\lim_{s \downarrow 0} \int_{0}^{\infty} e^{-st} \mu(dt,\Rpmu)\\
&=\int_{0}^{\infty} 1 \ \mu(dt,\raum)=\mu(\Raum).
\end{align*}
Hence it is $\mu(\Raum)=1$, i.e. $\mu \in \mathcal{M}^1(\Raum)$.
\end{proof}
\end{theorem}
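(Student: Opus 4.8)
The plan is to prove the two directions separately, exploiting in both cases that the C-L transform is simply the integral of the bounded kernel $f_{s,y}(t,x):=e^{-st}\ind{[-\infty,y]}(x)$ against $\mu$. Thus, after fixing $(s,y)$, every statement about convergence of $\mathcal{L}(\mu_n)(s,y)$ becomes a statement about convergence of $\int f_{s,y}\,d\mu_n$, which I can attack with the Portmanteau theorem in (a) and with Helly-type compactness in (b).

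For part (a) I would apply the Portmanteau theorem to $f_{s,y}$, which is bounded with $0\le f_{s,y}\le 1$. Its discontinuity set is $\Disc(f_{s,y})=\Rplus\times\{y\}$, since $t\mapsto e^{-st}$ is continuous and $x\mapsto\ind{[-\infty,y]}(x)$ jumps only at $x=y$. As the second marginal $\mu(\Rplus\times\,\cdot\,)$ is a probability measure on $\Rpmu$, it has at most countably many atoms, so $\mu(\Disc(f_{s,y}))=0$ for all but countably many $y$. At each such $(s,y)$ --- exactly the continuity points of $\mathcal{L}(\mu)$ --- Portmanteau yields $\mathcal{L}(\mu_n)(s,y)\to\mathcal{L}(\mu)(s,y)$.

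For part (b) I would argue by subsequences. Since $\Raum$ is locally compact and $\sigma$-compact and the $\mu_n$ are probability measures, Helly's selection theorem gives, along any subsequence, a further subsequence converging vaguely to some subprobability measure $\mu$ with $\mu(\Raum)\le 1$. For $s>0$ the kernel $f_{s,y}$ vanishes at infinity, because $e^{-st}\to 0$ as $t\to\infty$ while the second coordinate ranges in the compact set $\Rpmu$; hence the Portmanteau argument of (a) applies verbatim to vague convergence and gives $\mathcal{L}(\mu)(s,y)=\lim_l \mathcal{L}(\mu_{n_{k_l}})(s,y)$ at continuity points. Combined with the hypothesis $\mathcal{L}(\mu_n)\to\varphi$ (off a countable set of $y$), this forces $\mathcal{L}(\mu)(s,y)=\varphi(s,y)$ on a co-countable set of $y$ for every $s$; since $\mathcal{L}(\mu)$ is right-continuous in $y$, equality then holds everywhere. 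By the uniqueness of the C-L transform (Proposition \ref{EigenschaftenC-L}(c)) the subsequential limit $\mu$ is the same for every subsequence, so the whole sequence converges vaguely to a single $\mu$ with $\mathcal{L}(\mu)=\varphi$.

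It remains --- and this is the main obstacle --- to upgrade vague convergence to weak convergence, i.e. to rule out loss of mass as $t\to\infty$. Here the extra hypothesis $\lim_{s\downarrow0}\varphi(s,\infty)=1$ is precisely the tightness condition that does this: writing $\mathcal{L}(\mu)(s,\infty)=\int_0^\infty e^{-st}\,\mu(dt,\Rpmu)$ and letting $s\downarrow0$, monotone convergence gives $\mu(\Raum)=\lim_{s\downarrow0}\varphi(s,\infty)=1$, so $\mu$ is a genuine probability measure, and vague convergence to a probability limit coincides with weak convergence. The delicate points to verify are that $f_{s,y}$ vanishes at infinity for $s>0$ (so the vague machinery may be invoked in part (b), whereas at $s=0$ it does not, which is exactly why mass could otherwise escape) and the justified interchange of the limit $s\downarrow0$ with the integral.
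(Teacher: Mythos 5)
Your proof is correct and follows essentially the same route as the paper's: Portmanteau applied to the kernel $f_{s,y}(t,x)=e^{-st}\ind{[-\infty,y]}(x)$ for part (a), and Helly selection plus identification of all subsequential limits via uniqueness of the C-L transform, with the hypothesis $\lim_{s\downarrow 0}\varphi(s,\infty)=1$ supplying the mass so that vague convergence upgrades to weak convergence, for part (b). Your additional observation that $f_{s,y}$ vanishes at infinity when $s>0$ --- which is what legitimizes applying the Portmanteau step to merely vague limits toward a subprobability measure --- is in fact a point the paper's own proof glosses over when it invokes (a) for the vague subsequential limits.
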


The following Lemma extends the convergence in Theorem \ref{Stetigkeitssatz} to a kind of uniform convergence on compact subsets needed later.  
\begin{lemma}\label{gleichmKonv} Let $\mu_n, \mu \in \mathcal{M}^1(\Raum )$ for all $n \in \mathbb{N}$ and assume that $\mathcal{L}(\mu)(s,y)$ is continuous in $y \in \Rkomp$. If $\mu_n \xlongrightarrow[]{w}\mu$ and $(s_n,y_n)\xlongrightarrow[]{}(s,y)$ then
$\mathcal{L}(\mu_n)(s_n,y_n) \xrightarrow[]{} \mathcal{L}(\mu)(s,y)$ as $n \rightarrow \infty$.
\end{lemma}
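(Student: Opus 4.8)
The plan is to establish this joint continuity by \emph{decoupling} the two perturbations $s_n\to s$ and $y_n\to y$, because the integrand $f_{s,y}(t,x)=e^{-st}\ind{[-\infty,y]}(x)$ behaves very differently in its two arguments: it is Lipschitz in $s$ but only monotone (and discontinuous) in $y$. Accordingly I would first split
\[
\mathcal{L}(\mu_n)(s_n,y_n)=\big(\mathcal{L}(\mu_n)(s_n,y_n)-\mathcal{L}(\mu_n)(s,y_n)\big)+\mathcal{L}(\mu_n)(s,y_n)
\]
and treat the two summands by different devices.

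For the first summand I would exploit that $x\mapsto e^{-x}$ is $1$-Lipschitz on $[0,\infty)$, so that $|e^{-s_nt}-e^{-st}|\le|s_n-s|\,t$. Since $0\le\ind{[-\infty,y_n]}\le 1$, this yields, for every $T>0$, the bound
\[
\big|\mathcal{L}(\mu_n)(s_n,y_n)-\mathcal{L}(\mu_n)(s,y_n)\big|\le\int|e^{-s_nt}-e^{-st}|\,\mu_n(dt,\Rpmu)\le|s_n-s|\,T+2\,\mu_n\big((T,\infty)\times\Rpmu\big),
\]
which is uniform in $y_n$. Because $\mu_n\xrightarrow{w}\mu$, the family $\{\mu_n\}$ is tight in the first coordinate (the second factor $\Rpmu$ being compact), so for given $\varepsilon$ one chooses $T$ making the tail term uniformly smaller than $\varepsilon$, while $|s_n-s|\,T\to0$; hence the first summand tends to $0$.

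For the second summand the argument $s$ is now frozen, and I would use the monotonicity of $y\mapsto\ind{[-\infty,y]}$ together with $e^{-st}\ge0$. Fixing $\delta>0$ and taking $n$ large enough that $y-\delta\le y_n\le y+\delta$, one gets the sandwich
\[
\mathcal{L}(\mu_n)(s,y-\delta)\le\mathcal{L}(\mu_n)(s,y_n)\le\mathcal{L}(\mu_n)(s,y+\delta).
\]
For the \emph{fixed} arguments $(s,y\pm\delta)$ the Continuity Theorem~\ref{Stetigkeitssatz}(a) gives $\mathcal{L}(\mu_n)(s,y\pm\delta)\to\mathcal{L}(\mu)(s,y\pm\delta)$, and since $\mathcal{L}(\mu)(s,\cdot)$ is assumed continuous, every value $y\pm\delta$ is a continuity point, so no exceptional $y$ need be avoided. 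Passing to $\limsup_n$ and $\liminf_n$ and then letting $\delta\downarrow0$, continuity in $y$ forces $\mathcal{L}(\mu)(s,y\pm\delta)\to\mathcal{L}(\mu)(s,y)$, whence $\mathcal{L}(\mu_n)(s,y_n)\to\mathcal{L}(\mu)(s,y)$. Combining the two summands proves the claim.

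I expect the main obstacle to be precisely the discontinuity of the indicator in the second variable, which forbids a direct appeal to weak convergence for the moving argument $y_n$; the monotone sandwich between the fixed continuity points $y\pm\delta$ is the device that circumvents it, and it leans essentially on the hypothesis that the limiting C-L transform is continuous in $y$ (equivalently, that the second marginal of $\mu$ carries no atoms). A secondary difficulty is that \emph{both} arguments move at once, which is what the separation above is designed to handle: the Lipschitz estimate in $s$ is uniform in $y$ (using tightness), while the monotone estimate in $y$ is carried out at fixed $s$. I would finally remark that the boundary cases $y\in\{-\infty,\infty\}$ cause no trouble, since there $\ind{[-\infty,y]}$ degenerates and the same inequalities persist.
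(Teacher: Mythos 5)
Your proof is correct, but it takes a different route from the paper's. The paper reduces the statement to one-dimensional Laplace transform theory: it defines the section measures $\tilde{\mu}_n(dt):=\mu_n(dt,[-\infty,y_n])$ and $\tilde{\mu}(dt):=\mu(dt,[-\infty,y])$, proves $\tilde{\mu}_n\xrightarrow{w}\tilde{\mu}$ (the moving level $y_n$ is absorbed by the distribution-function argument: $y\mapsto\mu_n(B\times[-\infty,y])$ is monotone and the limit is continuous in $y$), and then invokes the uniform convergence of one-dimensional Laplace transforms of weakly convergent measures to handle $s_n\to s$. You instead decouple the two increments: the $s$-perturbation is controlled by an explicit Lipschitz-plus-tightness estimate that is uniform in $y$, and the $y$-perturbation at frozen $s$ by a monotone sandwich between the fixed continuity points $y\pm\delta$, to which the paper's Continuity Theorem (Theorem~\ref{Stetigkeitssatz}(a)) applies. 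Both arguments ultimately rest on the same two pillars --- monotonicity of $y\mapsto\ind{[-\infty,y]}$ together with the assumed continuity of $\mathcal{L}(\mu)(s,\cdot)$, and some uniformity in $s$ --- but your version is more self-contained (no appeal to uniform convergence of classical Laplace transforms), while the paper's is shorter modulo that citation. Two small points you should tighten: at $y=\pm\infty$ the two-sided sandwich $y-\delta\le y_n\le y+\delta$ is not literally available, so you need the one-sided version (e.g.\ for $y=\infty$, bound below by $\mathcal{L}(\mu_n)(s,M)$ for arbitrary finite $M$ and above by $\mathcal{L}(\mu_n)(s,\infty)$, then let $M\to\infty$ using the continuity hypothesis at $\infty$); and the uniform tightness you invoke can be replaced by the elementary Portmanteau bound $\overline{\lim}_n\,\mu_n((T,\infty)\times\Rpmu)\le\mu([T,\infty)\times\Rpmu)$ at a continuity point $T$ of the first marginal, which suffices since only a $\limsup$ estimate is needed.
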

\begin{proof}
See Appendix.
\end{proof}

As for any type of convolution structure, there is the concept of infinite divisibility. 

\begin{defi} A probability measure $\mu \in \mathcal{M}^1(\Raum)$ is \textit{infinitely divisible with respect to} $\plusmax$ (or short: $\plusmax$-\textit{infinitely divisible}), if for all $n\in \mathbb{N}$ there exists a probability measure $\mu_n \in \mathcal{M}^1(\Raum)$, such that $\mu_n^{\circledast n}=\mu.$
\end{defi}
Trivially, every distribution on $\Rpmu$ is max-infinitely divisible. The following example shows that sum-infinite divisibility in one component and max-infinite divisibility in the other component not necessarily implies $\plusmax$-infinite divisibility.
\begin{example}
Let $(X,Y)$ be a random vector which distribution is given by
\begin{itemize}
\item $P(X=k,Y=1)=\text{Pois}_{\lambda}(k) \text{ if } k \in \mathbb{N}_0 \text{ is even}$;
\item $P(X=k,Y=0)=\text{Pois}_{\lambda}(k) \text{ if } k \in \mathbb{N}_0 \text{ is odd}$;
\item $P(X=k,Y=l)=0$ for $k\in\mathbb{N}_0$, $l\geq 2$;
\end{itemize}
for a $\lambda>0$.
Furthermore the distribution of $Y$ is given by
\begin{align*}
P(Y=1)=P(Y=0)=1/2.
\end{align*}
$Y$ is trivially max-infinite divisible (every univariate distribution is max-infinite divisible). The random variable $X$ is Poisson distributed with parameter $\lambda>0$ and hence sum-infinite divisible. If $(X,Y)$ is $\plusmax$-infinite divisible, there exist i.i.d. random vectors $(X_1,Y_1),(X_2,Y_2)$, such that
\begin{align*}
(X,Y)\overset{d}{=}(X_1,Y_1) \plusmax (X_2,Y_2).
\end{align*}
However, there is no distribution which fulfils this. In fact, by necessity the support of $(X_1,Y_1)$ has to be a subset of $\mathbb{N}_0 \times \left\{0,1\right\}$ and $(X_1,Y_1)$ has no mass in $(0,0)$. Consequently there exists no distribution for $(X_1,Y_1)$, such that $P(X_1+X_2=1,Y_1 \vee Y_2=0)$ is  positive. But on the other hand we have
\begin{align*}
P(X=1,Y=0)=\text{Pois}_{\lambda}(1)>0.
\end{align*} 
So $(X,Y)$ can not be $\plusmax$-infinite divisible.
\end{example} 
The next Lemma shows that the weak limit of a sequence of $\plusmax$-infinite divisible measures is $\plusmax$-infinite divisible as well.
\begin{lemma}\label{lemmawGrenzunendteil} Let $\mu_n, \mu \in \mathcal{M}^1(\Raum)$ for all $n \in \mathbb{N}$ and $\mu_n \wKonvergenz \mu$ as $n \rightarrow \infty$. If $\mu_n$ is $\plusmax$-infinite divisible for each $n \in \mathbb{N}$ then $\mu$ is $\plusmax$-infinite divisible.
\end{lemma}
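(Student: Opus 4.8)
The plan is to transfer everything to the C-L transform, which linearizes $\circledast$ (Proposition~\ref{EigenschaftenC-L}(b)) and takes values in $[0,1]$ (Proposition~\ref{EigenschaftenC-L}(d)), and then to recover the roots of $\mu$ by invoking the Continuity Theorem~\ref{Stetigkeitssatz}. Fix $k \in \mathbb{N}$. Since each $\mu_n$ is $\plusmax$-infinitely divisible, there is $\mu_{n,k} \in \mathcal{M}^1(\Raum)$ with $\mu_{n,k}^{\circledast k} = \mu_n$. By Proposition~\ref{EigenschaftenC-L}(b) this gives $\mathcal{L}(\mu_{n,k})(s,y)^k = \mathcal{L}(\mu_n)(s,y)$, and since $\mathcal{L}(\mu_{n,k})(s,y)$ is a nonnegative real number in $[0,1]$ by Proposition~\ref{EigenschaftenC-L}(d), we may take the unique nonnegative $k$-th root and write $\mathcal{L}(\mu_{n,k})(s,y) = \mathcal{L}(\mu_n)(s,y)^{1/k}$.

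Next I would pass to the limit in $n$. By Theorem~\ref{Stetigkeitssatz}(a), $\mu_n \wKonvergenz \mu$ implies $\mathcal{L}(\mu_n)(s,y) \to \mathcal{L}(\mu)(s,y)$ at every $(s,y)$ where $\mathcal{L}(\mu)$ is continuous, which covers all but countably many $y \in \Rkomp$. The map $r \mapsto r^{1/k}$ is continuous on $[0,1]$, so at each such point $\mathcal{L}(\mu_{n,k})(s,y) = \mathcal{L}(\mu_n)(s,y)^{1/k} \to \mathcal{L}(\mu)(s,y)^{1/k} =: \varphi(s,y)$. Thus $\mathcal{L}(\mu_{n,k})$ converges pointwise to $\varphi$ at all but countably many $y$, which is exactly the hypothesis of Theorem~\ref{Stetigkeitssatz}(b). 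To apply that part I still must check the normalization $\lim_{s \downarrow 0}\varphi(s,\infty) = 1$. Here $\varphi(s,\infty) = \mathcal{L}(\mu)(s,\infty)^{1/k}$, and $\mathcal{L}(\mu)(s,\infty) = \int_0^{\infty} e^{-st}\mu(dt,\Rpmu)$ is the ordinary Laplace transform of the first marginal of $\mu$, a probability measure on $\Rplus$, so it tends to $1$ as $s \downarrow 0$ by dominated convergence; hence $\varphi(s,\infty) \to 1$. Theorem~\ref{Stetigkeitssatz}(b) then furnishes a measure $\nu_k \in \mathcal{M}^1(\Raum)$ with $\mathcal{L}(\nu_k) = \varphi$ and $\mu_{n,k} \wKonvergenz \nu_k$.

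Finally I would identify $\nu_k$ as a $k$-th root of $\mu$. By construction $\mathcal{L}(\nu_k)(s,y)^k = \varphi(s,y)^k = \mathcal{L}(\mu)(s,y)$ for all $(s,y) \in \Rplus \times \Rkomp$, while by Proposition~\ref{EigenschaftenC-L}(b) the left-hand side equals $\mathcal{L}(\nu_k^{\circledast k})(s,y)$; the uniqueness statement Proposition~\ref{EigenschaftenC-L}(c) then yields $\nu_k^{\circledast k} = \mu$. As $k \in \mathbb{N}$ was arbitrary, $\mu$ is $\plusmax$-infinitely divisible.

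I expect the only genuinely delicate point to be the bookkeeping around the exceptional $y$-values in the two invocations of the Continuity Theorem: one must make sure the countable set where $\mathcal{L}(\mu_n) \to \mathcal{L}(\mu)$ may fail is harmless for verifying the pointwise hypothesis of part (b), which it is, precisely because part (b) only demands convergence at all but countably many $y$. Everything else is a direct consequence of the real-valuedness of the C-L transform, which is what makes the nonnegative $k$-th root unambiguous and avoids the branch-selection difficulties one would face with Fourier transforms.
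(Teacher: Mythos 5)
Your proposal is correct and follows essentially the same route as the paper's own proof: take nonnegative $k$-th roots on the C-L transform side, pass to the limit via the Continuity Theorem, verify the normalization $\lim_{s\downarrow 0}\varphi(s,\infty)=1$ using the first marginal, and identify the resulting measure as a $k$-th convolution root of $\mu$ by uniqueness of the C-L transform. The only difference is presentational: you make explicit the role of Proposition \ref{EigenschaftenC-L}(d) in justifying the unambiguous real root and the bookkeeping of the exceptional countable sets, which the paper leaves implicit.
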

\begin{proof}
See Appendix.
\end{proof}
In the following let $x_0$ denote the \textit{left endpoint} of the distribution of $A$ in \eqref{AnnahmeEinleitung}, i.e.
\begin{align*}
x_0:=\inf\left\{y \in \Rpmu: F_A(y)>0\right\}.
\end{align*}
For $F_A$ there are two possible cases, namely either $F_A(x_0)=0$ or there is an atom in $x_0$ so that $F_A(x_0)>0$. Since the limit distributions of rescaled maxima are the extreme value distributions which are continuous, in the following we will only consider the case where $F_A(x_0)=0$. If $\varphi$ is a C-L transform, we call the function $\Psi: S \rightarrow \mathbb{R}$ with
\begin{align}
\nolinebreak{\varphi=\exp(-\Psi)} \hspace{0.5cm} \text{ and } \hspace{0.5cm}  \Psi(0,\infty)=0 
\end{align}
\textit{C-L exponent} (similar to the Laplace exponent in the context of Laplace transforms). The following Theorem gives us a L\'evy-Khintchine representation for the C-L exponent of $\plusmax$-infinite divisible distributions on the semigroup $\Rplus \times \Rpmu$.
\begin{theorem}\label{TheoremLKD}\ \\
A function $\varphi$ is the C-L transform of a $\plusmax$-infinite divisible measure $\mu$ on $\Raum$ with left endpoint $x_0$ such that $\mu(\rr_+\times\{x_0\})=0$, if and only if there exists an $a \in \Rplus$ and a Radon measure $\eta$ on $\Rplus \times [x_0,\infty]$ with $\eta(\left\{(0,x_0)\right\})=0$ satisfying the integrability conditions
\begin{align}
\int_{\Rplus} \min(1,t) \eta (dt,[x_0,\infty]) < \infty \ \text{ and } \ \eta(\Rplus \times (y,\infty]) < \infty \ \forall y > x_0,  \label{levymass}
\end{align}
such that $\Psi:=-\log(\varphi)$ has the representation
\begin{align}
\Psi(s,y)=\begin{cases}\displaystyle a \cdot s +\int_{\Rplus}\int_{\left[x_0,\infty\right]}\left(1-e^{-st}\cdot\ind{\left[x_0,y\right]}(x)\right)\eta(dt,dx) & \forall y>x_0 \label{LKD}\\
\infty & \forall y \leq x_0\end{cases}
\end{align}
for all $(s,y) \in \Rplus \times \Rkomp$. The representation given in \eqref{LKD} is unique and we write $\mu \sim \left[x_0,a,\eta\right]$. We call a measure $\eta$ which fulfils \eqref{levymass} a \textit{L\'evy measure} on the semigroup $(\Rplus \times \Rpmu,\plusmax)$. 
\end{theorem}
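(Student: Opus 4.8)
The plan is to prove the two implications separately: sufficiency by an explicit accompanying-law construction, and necessity by first establishing that $\Psi=-\log\varphi$ is a continuous negative definite function on the semigroup $(\Rplus\times\Rkomp,\plusmin)$ and then reading off $(a,\eta)$ from the C-L exponent. Throughout I would use that the bounded semicharacters of this semigroup are exactly those displayed in \eqref{MengeallerHG}, and that the pointwise product of C-L transforms is again a C-L transform (Proposition \ref{EigenschaftenC-L}(b)).

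For sufficiency, suppose $a$ and $\eta$ are given with the conditions \eqref{levymass}. First I would check that the double integral in \eqref{LKD} is finite for every $y>x_0$: on $\{x>y\}$ the integrand equals $1$ and is controlled by $\eta(\Rplus\times(y,\infty])<\infty$, while on $\{x_0\le x\le y\}$ it is bounded by $1-e^{-st}\le\max(1,s)\min(1,t)$, hence integrable by the first condition in \eqref{levymass}. Next, for $\delta>0$ set $\eta_\delta:=\eta|_{A_\delta}$ with $A_\delta:=\{t>\delta\}\cup\{x>x_0+\delta\}$; the two conditions in \eqref{levymass} make $\eta_\delta$ a finite measure. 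Let $\mu_\delta$ be the $\circledast$-compound Poisson measure generated by $\eta_\delta$, shifted by the point mass at $(a,-\infty)$; by Proposition \ref{EigenschaftenC-L}(a),(b) its C-L transform equals $\exp(-\Psi_\delta)$, where $\Psi_\delta$ is \eqref{LKD} with $\eta$ replaced by $\eta_\delta$. Since the integrand in \eqref{LKD} is non-negative, $\Psi_\delta\uparrow\Psi$ as $\delta\downarrow0$, so $\mathcal{L}(\mu_\delta)\to e^{-\Psi}=\varphi$ pointwise; dominated convergence gives $\lim_{s\downarrow0}\Psi(s,\infty)=0$, i.e.\ $\lim_{s\downarrow0}\varphi(s,\infty)=1$, so Theorem \ref{Stetigkeitssatz}(b) yields a probability measure $\mu$ with $\mathcal{L}(\mu)=\varphi$. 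Replacing $(a,\eta)$ by $(a/n,\eta/n)$ produces, by the same argument, probability measures $\mu_n$ with $\mathcal{L}(\mu_n)=\varphi^{1/n}$, whence Proposition \ref{EigenschaftenC-L}(b),(c) give $\mu_n^{\circledast n}=\mu$ and $\mu$ is $\plusmax$-infinitely divisible.

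For necessity, let $\mu$ be $\plusmax$-infinitely divisible with roots $\mu_n$, $\mu_n^{\circledast n}=\mu$, and $\varphi=\mathcal{L}(\mu)$. I would first show that $\varphi(s,y)>0$ precisely when $y>x_0$: for $y\le x_0$ the hypothesis $\mu(\Rplus\times\{x_0\})=0$ together with the definition of $x_0$ forces the indicator in \eqref{C-L-Transformierte} to be supported on a $\mu$-null set, so $\varphi(s,y)=0$; for $y>x_0$ the map $s\mapsto\varphi(s,y)$ is the Laplace transform of the nonzero measure $\mu(\,\cdot\,\times[-\infty,y])$ and is therefore strictly positive. Thus $\Psi=-\log\varphi$ is finite exactly on $\{y>x_0\}$, satisfies $\Psi(0,\infty)=0$, and equals $\infty$ on $\{y\le x_0\}$, matching the case distinction in \eqref{LKD}. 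Since $\mathcal{L}(\mu_n)=\varphi^{1/n}$ is positive semidefinite (Lemma \ref{lemmaCL}) and products of positive semidefinite functions are positive semidefinite, $\varphi^{t}$ is positive semidefinite for all $t>0$, so by the Schoenberg-type correspondence in \cite{Berg} the function $\Psi$ is continuous negative definite.

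Now comes the identification. From $\mathcal{L}(\mu_n)=\varphi^{1/n}$ I would use $n\bigl(1-\varphi^{1/n}\bigr)\to\Psi$ and rewrite the left side as $n\int\bigl(1-e^{-st}\ind{[x_0,y]}(x)\bigr)\,\mu_n(dt,dx)$. Establishing tightness of the measures $n\mu_n$ on the punctured space $(\Rplus\times[x_0,\infty])\setminus\{(0,x_0)\}$ — using that at $s=0$ one has $n\,\mu_n(\Rplus\times(y,\infty])\to-\log\varphi(0,y)<\infty$ for each $y>x_0$ — yields a Radon limit $\eta$ with $\eta(\{(0,x_0)\})=0$, and the drift is recovered as $a=\lim_{s\to\infty}\Psi(s,\infty)/s\ge0$. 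The two projections then both pin down and verify \eqref{levymass}: setting $s=0$ gives $\eta(\Rplus\times(y,\infty])=-\log\varphi(0,y)$, finite for $y>x_0$, while setting $y=\infty$ recovers the subordinator exponent $\Psi(s,\infty)=as+\int_{\Rplus}(1-e^{-st})\,\eta(dt,[x_0,\infty])$, which is a genuine Laplace exponent and therefore forces $\int_{\Rplus}\min(1,t)\,\eta(dt,[x_0,\infty])<\infty$. Because the only additive functions on $(\Rplus\times\Rkomp,\plusmin)$ are linear in the sum coordinate (the min coordinate is idempotent), the non-integral part of the abstract representation collapses to $a\cdot s$, giving \eqref{LKD}; uniqueness of $(a,\eta)$ follows from these two projections together with Proposition \ref{EigenschaftenC-L}(c).

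I expect the main obstacle to be this last step of the necessity direction. The delicate point is that the root array $\mu_n$ concentrates not at the true neutral element $(0,-\infty)$ but at $(0,x_0)$, so the L\'evy measure must be extracted on the punctured space $(\Rplus\times[x_0,\infty])\setminus\{(0,x_0)\}$; cleanly separating the deterministic drift $a$ — the compensation of the small sum-jumps as $t\downarrow0$ — from the genuine jump intensity $\eta$, and proving the tightness of $\{n\mu_n\}$ away from $(0,x_0)$ that guarantees the limiting $\eta$ really satisfies both integrability conditions in \eqref{levymass}, is where the substantive work lies.
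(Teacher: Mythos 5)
Your overall architecture is genuinely different from the paper's: the paper proves both directions by invoking the abstract L\'evy--Khintchine theorem for negative definite functions on semigroups (Theorem 4.3.20 in \cite{Berg}), and then spends its effort identifying the additive part as $a\cdot s$ and collapsing the integral over the dual semigroup into an integral over $\Rplus\times[x_0,\infty]$; you instead propose a compound-Poisson approximation for sufficiency and a triangular-array compactness argument for necessity. Your sufficiency half is essentially sound and arguably more self-contained than the paper's: the truncation $\eta_\delta=\eta|_{\{t>\delta\}\cup\{x>x_0+\delta\}}$ is finite by \eqref{levymass}, monotone convergence gives $\Psi_\delta\uparrow\Psi$, Theorem \ref{Stetigkeitssatz}(b) produces $\mu$ with $\mathcal L(\mu)=\varphi$, and replacing $(a,\eta)$ by $(a/n,\eta/n)$ produces the $n$-th roots. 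One imprecision: $\Psi$ is not \emph{continuous} negative definite in general ($\varphi(0,\cdot)$ is only a right-continuous CDF); negative definiteness in the semigroup sense is what you have and all you need.

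The necessity half, however, has two genuine gaps. First, the central step --- passing from vague subsequential limits of $n\mu_n$ on the punctured space to the representation \eqref{LKD} --- is announced but not carried out: one must split $n\int(1-e^{-st}\ind{[x_0,y]}(x))\,\mu_n(dt,dx)$ into the marginal part $n\int(1-e^{-st})\,\mu_n(dt,\Rpmu)$ (handled by the classical subordinator limit theorem applied to the sum-marginal, which legitimately yields $a$ and the first condition in \eqref{levymass}) and the tail part $n\int e^{-st}\ind{(y,\infty]}(x)\,\mu_n(dt,dx)$, prove convergence of the latter to $\int e^{-st}\ind{(y,\infty]}(x)\,\eta(dt,dx)$, and then show that \emph{every} subsequential limit produces the same $\eta$ --- which itself requires uniqueness of the representation, proved beforehand. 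You flag this as ``where the substantive work lies,'' but in your approach this \emph{is} the theorem; note also that you cannot borrow the paper's Lemma \ref{Konvergenzsatz1} or Theorem \ref{Konvergenzsatz2} here, since the paper derives those \emph{from} Theorem \ref{TheoremLKD}, so citing them would be circular. Second, your uniqueness argument is wrong as stated: the two projections $s=0$ and $y=\infty$ determine only the two marginals of $\eta$ (together with $a$), and two distinct measures on $\Rplus\times[x_0,\infty]$ can share both marginals. The correct device uses the full bivariate data: for each fixed $y>x_0$,
\begin{align*}
\Psi(s,y)-\Psi(s,\infty)=\int_{\Rplus}e^{-st}\,\eta(dt,(y,\infty])
\end{align*}
is an ordinary Laplace transform in $s$, so classical Laplace uniqueness determines $\eta(\cdot\times(y,\infty])$ for every $y>x_0$; a monotone class argument plus the marginal identity then pins down $\eta$ and $a$. (This differencing idea is essentially what the paper does with $\Psi^{*}$ and the finite measure $M$ in its proof of Lemma \ref{Konvergenzsatz1}.) Until these two points are filled in, the necessity direction remains a plan rather than a proof.
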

\begin{proof}
Let $\varphi$ be the C-L transform of a $\plusmax$-infinite divisible measure $\mu$. Since  \[\varphi(s,y)=\int_{\Rplus \times \Rpmu} e^{-st} \ind{\left[-\infty,y\right]} (x) \mu(dt,dx) ,\] 
we have by our assumptions that $0<\varphi(s,y) \leq1$ for all $(s,y) \in \Rplus \times (x_0,\infty]$. On the set $\Rplus \times \left(-\infty,x_0\right]$ we have $\varphi\equiv0$ and hence $\Psi\equiv\infty$. In the following we consider $\varphi$ restricted on $S_{x_0}:=\Rplus \times (x_0,\infty]$ with the semigroup operation $\plusmin$. The function $\varphi$ is strictly positive, positive semidefinite and $\plusmax$-infinite divisible, consequently the map $\Psi : S_{x_0} \rightarrow \mathbb{R}$ with $\Psi:=-\log(\varphi)$ is due to Theorem 3.2.7 in \cite{Berg} negative semidefinite. With Theorem 4.3.20 in \cite{Berg} it then follows that there exists an additive function $q: S_{x_0} \rightarrow [0,\infty[$ and a radon measure $\tilde{\eta} \in \mathcal{M}_{+}(\bf{\hat{S}_{x_0}}\backslash \left\{1\right\})$ such that
\begin{align}
\Psi(s,y)=\Psi(e)+q(s,y)+\int_{\bf{\hat{S}_{x_0}}\backslash \left\{1\right\}}\left(1-\rho(s,y)\right)\tilde{\eta}(d\rho), \label{LKDPsi1}
\end{align}
where $\bf{\hat{S}_{x_0}}$ is the set of semicharacters on the semigroup $(S_{x_0},\plusmin)$. We now show that the additive function $q$ is of the form $q(s,y)=a \cdot s$ for some $a \geq 0$. In view of the fact that $\varphi(s,y)$ is continuous in $s$ for an arbitrary but fixed $y \in \left(x_0,\infty\right]$, $\Psi$ has to be continuous and hence also $q$ for $s>0$ (the integral in \eqref{LKDPsi1} has at most a discontinuity in $s=0$). Due to the fact that $q$ is additive we have
\begin{align*}
q(s_1+s_2,y_1 \wedge y_2)=q(s_1,y_1)+q(s_2,y_2)
\end{align*}
for any $(s_1,y_1),(s_2,y_2) \in S_{x_0}$. Because $q$ is continuous for an arbitrary but fixed $y$ in $s$ (up to $s=0$) and $q(s_1+s_2,y)=q(s_1,y)+q(s_2,y)$  there exists an $a(y) \geq 0$ such that $q(s,y)=a(y) \cdot s$. Additionally we have 
\begin{align}
q(2s,y_1 \wedge y_2)&=q(s+s,y_1 \wedge y_2)=q(s,y_1)+q(s,y_2)=a(y_1)\cdot s + a(y_2) \cdot s. \label{Beweisq1}
\end{align}  
First we assume $y_1<y_2$. Then we have
\begin{align}
q(2s,y_1 \wedge y_2)&=q(2s,y_1)=a(y_1) \cdot 2s. \label{Beweisq2}
\end{align}
If we subtract \eqref{Beweisq2} from \eqref{Beweisq1} we obtain
\begin{eqnarray*}
a(y_1) &=& a(y_2).
\end{eqnarray*}
 Due to the fact that $y_1,y_2 \in \left(x_0,\infty\right]$ were chosen arbitrarily, it follows that $a(y)$ is independent of $y$ and $q$ has the form $q(s,y)= a \cdot s$ with an $a \geq 0$. We divide the set $\bf{\hat{S}_{x_0}}$ of semicharacters in two disjoint sets 
\begin{align*}
\boldsymbol{\hat{S}^{'}_{x_0}}=\left\{e^{- t \LargerCdot}\ind{\left[x,\infty\right]}| x \in \left[x_0,\infty\right], s \in \left[0,\infty\right]\right\} ,\boldsymbol{\hat{S}^{''}_{x_0}}=\left\{e^{- t \LargerCdot}\ind{\left(x,\infty\right]}| x \in \left[x_0,\infty\right), s \in \left[0,\infty\right]\right\}.
\end{align*}
Accordingly we divide the integral in \eqref{LKDPsi1} and get due to the fact that $\bf\hat{S}^{'}_{x_0}$ and $\bf\hat{S}^{''}_{x_0}$ are isomorphic to $\left[0,\infty\right] \times \left[x_0,\infty\right]$ and $\left[0,\infty\right] \times \left[x_0,\infty\right)$, respectively,
\begin{align}
\Psi(s,y)=a \cdot s&+\int_{\left[0,\infty\right]}\int_{\left[x_0,\infty\right]}\left(1-e^{-st}\cdot\ind{\left[x,\infty\right]}(y)\right)\eta_1(dt,dx) \nonumber \\
&+\int_{\left[0,\infty\right]}\int_{\left[x_0,\infty\right)}\left(1-e^{-st}\cdot\ind{\left(x,\infty\right]}(y)\right)\eta_2(dt,dx), \label{LKDPsi2}
\end{align}
where $\eta_1$ and $\eta_2$ are radon measures on $(\Rplus \times [x_0,\infty],\plusmax)$ resp. $(\Rplus \times [x_0,\infty),\plusmax)$.
If we put $s=0$ in \eqref{LKDPsi2} we get
\begin{align*}
\Psi(0,y)&=\int_{\left[0,\infty\right]}\int_{\left[x_0,\infty\right]}(1-\ind{\left[x_0,y\right]}(x))\eta_1(dt,dx)\\
&\hspace{5mm}+\int_{\left[0,\infty\right]}\int_{\left[x_0,\infty\right)}(1-\ind{\left[x_0,y\right)}(x))\eta_2(dt,dx)\\
&=\int_{\left[x_0,\infty\right]} \ind{\left(y,\infty\right]}(x)\eta_1(\Rplus,dx)+\int_{\left[x_0,\infty\right)} \ind{\left[y,\infty\right]}(x)\eta_2(\Rplus,dx).
\end{align*}
Due to the fact that $\varphi(0,y)$ is right continuous in $y$, $\Psi(0,y)$ is right continuous in $y$, too. Consequently we have $\eta_2(\Rplus \times\left\{y\right\})=0$ for all $y>x_0$ or $\eta_2 \equiv 0$. If $\eta_2(\Rplus \times \left\{y\right\})=0$ it follows that $\eta_2(A \times\left\{y\right\})=0$ for all $A \in \mathcal{B}(\Rplus)$. Hence in both cases it follows together with \eqref{LKDPsi2} that $\Psi$ has the representation
\begin{align} \label{BeweisLKDDarst}
\Psi(s,y)=a \cdot s&+\int_{\left[0,\infty\right]}\int_{\left[x_0,\infty\right]}\left(1-e^{-st}\cdot\ind{\left[x,\infty\right]}(y)\right)\eta(dt,dx),
\end{align}
where $\eta$ is a radon measure on $\Rplus \times [x_0,\infty]$. If we put $y=\infty$ in \eqref{BeweisLKDDarst}, we get
\begin{align*}
\Psi(s,\infty)=a \cdot s+\int_{[0,\infty)}(1-e^{-st})\eta(dt,[x_0,\infty])+1_{]0,\infty[}(s)\cdot \eta(\left\{\infty\right\} \times [x_0,\infty]).
\end{align*}
Since $\Psi(s,\infty)$ is continuous in every $s\in \Rplus$ it follows that
\begin{align}
\eta(\left\{\infty\right\} \times [x_0,\infty])=0. \label{etaBeweisLKD}
\end{align}
Consequently $\Psi$ has the representation
\begin{align}
\Psi(s,y)=a \cdot s + \int_{\left[0,\infty\right)}\int_{\left[x_0,\infty\right]} (1-e^{-st}\cdot \ind{\left[x_0,y\right]}(x))\eta(dt,dx) \  \label{LKDPsi4}
\end{align}
for all $y>x_0$ where $\eta$ is a radon measure on $\Rplus \times [x_0,\infty]$ with $\eta(\left\{(0,x_0)\right\})=0$. Since $\Psi(s,y)<\infty$ for all $(s,y) \in \Rplus \times (x_0,\infty]$ the conditions in \eqref{levymass} hold true.
\\
Conversely, assume that $\Psi$ has the representation in \eqref{LKDPsi4} for all $y > x_0$. In view of the conditions \eqref{levymass}, we get for all $(s,y) \in \Rplus \times (x_0,\infty]$ that

\begin{align*}
\Psi(s,y)&=\int_{\Rplus \times [x_0,\infty]}(1-e^{-st}\ind{[x_0,y]}(x))\eta(dt,dx)\\
&=\int_{\Rplus \times [x_0,\infty]}(1-e^{-st})\eta(dt,dx)+\int_{\Rplus \times [x_0,\infty]}e^{-st}\ind{(y,\infty]}(x)\eta(dt,dx)\\
&\leq \int_{\Rplus}(1-e^{-st}) \eta(dt,[x_0,\infty])+\eta(\Rplus \times (y,\infty])\\
&<\infty.
\end{align*}
We now define a homomorphism $h: \Rplus \times [x_0,\infty] \xrightarrow[]{} \bf{\hat{S}_{x_0}}$ by $h(t,x)=e^{-t \cdot} \ind{[x,\infty]}$ and write $\Psi$ as
\begin{align*}
\Psi(s,y)= \Psi(0,\infty)+q(s,y)+\int_{\bf{\hat{S}_{x_0}} \backslash \left\{1\right\}}(1-\rho(s,y))h(\eta)(d\rho),
\end{align*}
where $(0,\infty)$ is the neutral element on the semigroup $(S_{x_0},\plusmin)$, $q$ an additive function and $h(\eta)$ the image measure of $\eta$ under $h$.
Due to Theorem 4.3.20 in \cite{Berg} is $\Psi$ a negative definite and bounded below function on $S_{x_0}$. Hence the function $\varphi=\exp(-\Psi)$ is positive definite and due to Proposition 3.2.7 in \cite{Berg} infinite divisible. The function $\varphi(0,y)=\exp(-\Psi(0,y))$ is an uniquely determined distribution function and $\varphi(s,\infty)$ a Laplace transform due to
\begin{align*}
&\Psi(s,\infty)=a \cdot s +\int_{\Rplus}(1-e^{-st})\eta(dt,[x_0,\infty]) \text{ and }\\
&\int_{\Rplus} \min(1,t) \, \eta (dt,[x_0,\infty]) < \infty,
\end{align*}
Furthermore we have  $\Psi(0,\infty)=0$. Consequently $\varphi$ is normalized and it follows from  Lemma \ref{lemmaCL} that $\varphi$ is the  C-L transform of a measure $\mu \in \mathcal{M}^1(\Rplus \times [x_0,\infty])$. 
Since  $\varphi(s,y)=0$ for all $(s,y)\in\rr_+\times [-\infty,x_0]$ we get that  $\varphi$ is the  C-L transform of an $\plusmax$-infinite divisible probability measure $\mu$ on $\Rplus \times \Rpmu$ with $\mu(\Rplus \times [-\infty,x_0])=0$.
\end{proof}

\begin{remark}\label{LKDkorollar} If $\varphi(0,x_0)=F_A(x_0)>0$  the only difference is that the case $y=x_0$ in \eqref{LKD} has to be included in the case $y>x_0$.\\
In the following we define the L\'evy measure to be zero on $\Rplus \times [-\infty,x_0)$. Hence the C-L exponent in \eqref{LKD} can be uniquely represented by
\begin{align}
\Psi(s,y)=a \cdot s + \int_{\Rplus}\int_{\Rpmu} (1-e^{-st}\cdot \ind{\left[-\infty,y\right]}(x))\eta(dt,dx), \label{LKDkorollarFormel}
\end{align}
for all $(s,y) \in \Rplus \times (x_0,\infty]$ in the case $\varphi(0,x_0)=0$.
\end{remark}

Hereinafter we say that the set $B\subset\rr_+\times [x_0,\infty]$ is \textit{bounded away from the origin} (here we think of $(0,x_0)$ if we talk about the origin), if $\text{dist}((0,x_0),B)>0$, which means that for all $x=(x_1,x_2) \in B$ exists an $\epsilon>0$ such that $x_1>\epsilon$ or $x_2>x_0 +\epsilon$. In view of the conditions \eqref{levymass}, a L\'evy measure has the property that it assigns finite mass to all sets bounded away from the origin.  We say that  a sequence $(\eta_n)_{n \in \mathbb{N}}$ of measures 
\textit{converges  vaguely}  to a L\'evy measure $\eta$ (with left endpoint $x_0$) if
\begin{align*}
\limn \eta_n(B)=\eta(B)
\end{align*} 
for all $S \in \mathcal{B}(\Rplus \times [x_0,\infty])$ with $\eta(\partial S)=0$ and $\text{dist}((0,x_0),S)>0$. We write \[ \eta_n \xrightarrow[n\rightarrow \infty]{v^{'}} \eta.\] in this case.
\begin{remark} Let $\Psi_n,\Psi$ be C-L exponents  of $\plusmax$-infinitely divisible laws $\mu_n,\mu$, respectively, where $\mu$ has left endpoint $x_0\in[-\infty,\infty]$ with $\mu(\rr_+\times\{x_0\})=0$.
If we want to show the convergence $\Psi_n(s,y) \xrightarrow[]{} \Psi(s,y)$ as $n \rightarrow \infty$ for all $(s,y) \in \Rplus \times \Rkomp$ it is enough to show the convergence for all $(s,y) \in \Rplus \times (x_0,\infty]$. 
This is because $\mathcal{L}(\mu)(0,x_0)=0 $ and 
\[\mathcal{L}(\mu_n)(s,y) \leq \mathcal{L}(\mu_n)(0,x_0)\xrightarrow[n \rightarrow \infty]{} \mathcal{L}(\mu)(0,x_0)=0,\quad y \leq x_0,\]
meaning that \[\mathcal{L}(\mu_n)(s,y) \xrightarrow[n \rightarrow \infty]{} 0=\mathcal{L}(\mu)(s,y)\] for all $(s,y) \in \Rplus \times (-\infty,x_0]$.
\end{remark}

\begin{lemma} \label{Konvergenzsatz1}\ \\
Let $(\mu_n)_{n \in \mathbb{N}}$ be a sequence of $\plusmax$-infinite divisible probability measures on $\Raum$ with $\mu_n \sim [x_n,a_n,\eta_n]$ for each $n \in \mathbb{N}$. Then  $\mu_n\overset{w}{\longrightarrow}\mu$ where $\mu \sim [x_0,a,\eta]$ (where either $x_n\leq x_0$ for all $n \in \mathbb{N}$ or $x_n \rightarrow x_0$) if and only if
\begin{itemize}
\item[(a)] $a_{n} \xrightarrow[]{} a $ for an $a\geq 0$,
\item[(b)] ${\eta_{n}} \xrightarrow[]{v'}\eta$ \text{ and }
\item[(c)] $\underset{\epsilon\downarrow 0}{\lim}\overbar{\underset{n \rightarrow \infty}{\lim}} \int_{\left\{0 \leq t<\epsilon \right\} } t \ \eta_{n}(dt,\Rpmu)=0$.
\end{itemize}
\end{lemma}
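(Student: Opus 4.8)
\emph{Strategy.} The plan is to pass from weak convergence of the measures to pointwise convergence of their C-L exponents and then to read off (a)--(c) from the L\'evy--Khintchine representation of Theorem \ref{TheoremLKD}. Since every $\mu_n$ and $\mu$ is $\plusmax$-infinitely divisible we write $\mathcal{L}(\mu_n)=\exp(-\Psi_n)$ and $\mathcal{L}(\mu)=\exp(-\Psi)$. By the Continuity Theorem (Theorem \ref{Stetigkeitssatz}) together with Proposition \ref{EigenschaftenC-L}(d), the convergence $\mu_n\overset{w}{\to}\mu$ is equivalent to $\mathcal{L}(\mu_n)(s,y)\to\mathcal{L}(\mu)(s,y)$ for all $s\ge 0$ and all but countably many $y$; taking logarithms on $\{y>x_0\}$, where $\mathcal{L}(\mu)>0$, this is in turn equivalent to $\Psi_n(s,y)\to\Psi(s,y)$ for all $s\ge 0$ and all but countably many $y>x_0$. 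The computational backbone is the identity, obtained by splitting the integrand of \eqref{LKD} according to $\{x\le y\}$ versus $\{x>y\}$,
\begin{align}
\Psi(s,y)=a\,s+\int_{\rr_+\times[x_0,y]}(1-e^{-st})\,\eta(dt,dx)+\eta\big(\rr_+\times(y,\infty]\big),
\end{align}
and its analogue for $\Psi_n$ with $(a_n,\eta_n,x_n)$. Here the first term is the drift, the last is the $\eta$-mass of a set bounded away from the origin, and only the middle term feels the degeneracy of the integrand at $(0,x_0)$.

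\emph{Sufficiency.} Assuming (a)--(c), I fix $s>0$ and any $y>x_0$ with $\eta(\rr_+\times\{y\})=0$ and treat the three terms separately. The drift converges by (a), and the tail mass converges by (b) because $\{x>y\}$ is bounded away from the origin and $y$ is an $\eta$-continuity level. For the middle term I split the $t$-range at a level $\varepsilon>0$: over $\{t\ge\varepsilon\}$ the integrand is bounded and continuous on a set bounded away from the origin, so the integral converges by (b) (the set convergence upgrades to integrals of bounded continuous functions by a routine portmanteau argument over $\eta$-continuity sets), while over $\{t<\varepsilon\}$ the contribution is at most $s\int_{\{t<\varepsilon\}}t\,\eta_n(dt,\Rpmu)$, whose $\limsup_n$ tends to $0$ as $\varepsilon\downarrow0$ by (c); the matching tail of $\eta$ vanishes by \eqref{levymass}. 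Sending $\varepsilon\downarrow0$ after $n\to\infty$ gives $\Psi_n(s,y)\to\Psi(s,y)$, and since $\lim_{s\downarrow0}\mathcal{L}(\mu)(s,\infty)=1$ for the probability measure $\mu$, Theorem \ref{Stetigkeitssatz}(b) yields $\mu_n\overset{w}{\to}\mu$.

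\emph{Necessity.} Assume $\mu_n\overset{w}{\to}\mu$. Projecting onto the first coordinate, the sum-marginals converge weakly; these are classical sum-infinitely-divisible laws on $[0,\infty)$ whose ordinary Laplace exponents are exactly $\Psi_n(s,\infty)=a_ns+\int_{\rr_+}(1-e^{-st})\,\overbar{\eta}_n(dt)$ with $\overbar{\eta}_n(dt):=\eta_n(dt,[x_n,\infty])$. Convergence of these Laplace exponents for every $s>0$ is precisely the hypothesis of the classical convergence theorem for infinitely divisible laws on the half-line (see \cite{feller,thebook}), which returns (a), the truncation condition (c), and vague convergence of the projections $\overbar{\eta}_n$ on $(0,\infty)$. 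To obtain the two-dimensional statement (b) I difference in $y$: for $x_0<y_1<y_2$ at continuity levels,
\begin{align}
\Psi_n(s,y_1)-\Psi_n(s,y_2)=\int_{\rr_+\times(y_1,y_2]}e^{-st}\,\eta_n(dt,dx),
\end{align}
so the left side is, for fixed $y_1<y_2$, the Laplace transform in $s$ of the finite strip measure $\eta_n(dt,(y_1,y_2])$. Its convergence for all $s>0$ yields vague convergence of the strip measures on $(0,\infty)$ by the one-dimensional continuity theorem, and evaluating the same difference at $s=0$ --- which is governed by the weak convergence of the max-marginals through $\Psi_n(0,y)=-\log F_{A_n}(y)$ --- pins down the total strip mass, hence the behaviour at $t=0$ within each strip. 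Assembling these strip limits over a generating family of rectangles bounded away from $(0,x_0)$ produces $\eta_n\xrightarrow{v'}\eta$, i.e.\ (b).

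\emph{Main obstacle.} The delicate point is the necessity of (b): the differencing reduces everything to one-dimensional slabs, but reassembling them into genuine two-dimensional vague convergence forces one to control, simultaneously, the strip mass sitting at $t=0$ (where only the combination of the $s>0$ Laplace information and the $s=0$ max-marginal information suffices, since a strip with $y_1>x_0$ is still bounded away from the origin) and the mass of $\eta_n$ drifting toward the true origin $(0,x_0)$, excluded exactly by (c) in the region where the integrand degenerates. The reconciliation of the moving left endpoints $x_n$ with $x_0$ under the standing hypothesis $x_n\le x_0$ or $x_n\to x_0$ adds bookkeeping but no new idea; the remaining drift and tail-mass convergences are routine.
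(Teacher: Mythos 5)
Your \emph{sufficiency} half ((a)--(c) imply $\mu_n\overset{w}{\to}\mu$) is essentially the paper's own argument: split the exponent into drift, a small-$t$ part bounded by $s\int_{\{t<\epsilon\}}t\,\eta_n(dt,\Rpmu)$, and pieces supported away from the origin handled by (b), then close with Theorem \ref{Stetigkeitssatz}(b). That part is fine.

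The \emph{necessity} half takes a genuinely different route from the paper, and it has a real gap. The paper never projects to the marginals: it forms the exponent differences $\Psi_n^*(s,y)=\Psi_n(s+h_1,y\wedge h_2)-\Psi_n(s,y)$, identifies them as C-L transforms of the \emph{finite} measures $M_n(dt,dx)=\bigl(1-e^{-h_1t}\ind{[-\infty,h_2]}(x)\bigr)\eta_n(dt,dx)$ with $M_n(\{(0,x_n)\})=a_nh_1$, deduces $M_n\to M$ weakly from the Continuity Theorem, and obtains (b) by dividing out the factor $K(t,x)$, which is strictly positive away from the origin; only afterwards does it extract (a) from $\Psi_n(\cdot,\infty)$ and then (c) from (a), (b) and the \emph{uniqueness} of the representation \eqref{LKD}. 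Your argument instead rests on the claim that the classical convergence theorem for infinitely divisible laws on $[0,\infty)$, applied to the first marginal, ``returns (a), the truncation condition (c), and vague convergence of the projections.'' That is not what the classical theorem delivers. It delivers vague convergence of the L\'evy measures away from $0$ together with convergence of the \emph{combined} quantity $a_n+\int_{\{t<\epsilon\}}t\,\overbar{\eta}_n(dt)$ in the double-limit sense; it cannot split this into $a_n\to a$ and (c), because small-jump mass of $\overbar{\eta}_n$ can turn into drift in the limit. Concretely, with $a_n=a$ and $\overbar{\eta}_n=\tfrac n2\,\varepsilon_{1/n}$ the marginal exponents converge to $(a+\tfrac12)s$ while $a_n\not\to a+\tfrac12$ and (c) fails; so marginal weak convergence alone can never yield (a) and (c) separately, and the step where you invoke \cite{feller,thebook} does not close. (This separation is exactly the delicate content of the lemma; the paper reaches it only after (b) is secured by the $M_n$-device and by appealing to uniqueness of the L\'evy--Khintchine representation, an order of deduction your proof cannot reproduce once the appeal to the classical theorem is removed.) A secondary, repairable issue: your strip measures are controlled only through Laplace transforms in $s>0$, which are blind to mass escaping to $t=\infty$, whereas $v'$-convergence is demanded on unbounded sets such as $(r,\infty)\times(y,\infty]$; you would need the $s=0$ information plus a tightness argument here, which the paper's device gets for free since the total masses $M_n(\Raum)=\Psi_n^*(0,\infty)$ converge.
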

\begin{proof}
See Appendix.
\end{proof}

\begin{lemma}\label{LemmabegleitendeVerteilung}\ \\
Let $\mu \in \WRaum$ with left endpoint $x_0$ and $c>0$. We define a probability measure $\Pi(c,\mu)$ by 
\begin{align*}
\Pi(c,\mu):=e^{-c}\sum_{k=0}^{\infty}\frac{c^{k}}{k!} \mu^{\circledast k}
\end{align*}
on $\Rplus \times [x_0,\infty]$, where $\mu^{\circledast 0}=\varepsilon_{(0,x_0)}$. Then $\Pi(c,\mu)$ is $\plusmax$-infinite divisible with $\Pi(c,\mu) \sim \left[x_0,0, c \cdot \mu \right]$ and $\mathcal{L}(\Pi(c,\mu))(s,y)>0$ for all $(s,y) \in \Rplus \times [x_0,\infty]$.
\end{lemma}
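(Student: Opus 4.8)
The plan is to reduce all four assertions to a single explicit computation of the C-L transform of $\Pi(c,\mu)$. First I would check that $\Pi(c,\mu)$ is genuinely a probability measure on $\Rplus\times[x_0,\infty]$: each $\mu^{\circledast k}$ is a probability measure (a finite $\circledast$-convolution of $\mu$, with $\mu^{\circledast 0}=\varepsilon_{(0,x_0)}$), every term of the series is non-negative, and the total mass is $e^{-c}\sum_{k\ge 0}c^{k}/k!=e^{-c}e^{c}=1$. Since $\mu$ is carried by $\Rplus\times[x_0,\infty]$ and $\plusmax$ leaves this set invariant (sums stay in $\Rplus$, maxima stay $\ge x_0$), so is $\Pi(c,\mu)$.

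Next I would compute the transform. Writing $L(s,y):=\mathcal{L}(\mu)(s,y)$, Proposition \ref{EigenschaftenC-L}(b) and induction give $\mathcal{L}(\mu^{\circledast k})=L^{k}$, and Proposition \ref{EigenschaftenC-L}(a) handles the finite linear combinations. To pass to the infinite series I would invoke monotone convergence: the integrand $e^{-st}\ind{[-\infty,y]}(x)$ lies in $[0,1]$ and all terms are non-negative, so sum and integral may be interchanged, yielding for every $(s,y)\in\Rplus\times[x_0,\infty]$
\begin{align*}
\mathcal{L}(\Pi(c,\mu))(s,y)=e^{-c}\sum_{k=0}^{\infty}\frac{c^{k}}{k!}L(s,y)^{k}=\exp\bigl(-c(1-L(s,y))\bigr).
\end{align*}
The positivity assertion is then immediate: by Proposition \ref{EigenschaftenC-L}(d) one has $0\le L(s,y)\le 1$, so the exponent $c(1-L(s,y))$ is finite and $\mathcal{L}(\Pi(c,\mu))(s,y)>0$ on all of $\Rplus\times[x_0,\infty]$.

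For infinite divisibility I would use uniqueness of the C-L transform (Proposition \ref{EigenschaftenC-L}(c)): the displayed formula gives $\mathcal{L}(\Pi(c/n,\mu))^{n}=\exp(-c(1-L))=\mathcal{L}(\Pi(c,\mu))$, hence $\Pi(c,\mu)=\Pi(c/n,\mu)^{\circledast n}$ for every $n$, with each factor a probability measure by the first step; thus $\Pi(c,\mu)$ is $\plusmax$-infinitely divisible. To identify the triple I would read off the C-L exponent
\begin{align*}
\Psi(s,y)=-\log\mathcal{L}(\Pi(c,\mu))(s,y)=c\int_{\Rplus\times\Rpmu}\bigl(1-e^{-st}\ind{[-\infty,y]}(x)\bigr)\mu(dt,dx),
\end{align*}
which is exactly the representation \eqref{LKDkorollarFormel} with drift $a=0$ and Lévy measure $\eta=c\mu$; the integrability conditions \eqref{levymass} follow at once since $\mu$ is a probability measure, giving $\int\min(1,t)(c\mu)(dt,\cdot)\le c<\infty$ and $c\mu(\Rplus\times(y,\infty])\le c<\infty$.

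The one genuinely delicate point — the step I would flag as the main obstacle — is the behaviour at the origin $(0,x_0)$. Because the $k=0$ term contributes the atom $e^{-c}\varepsilon_{(0,x_0)}$, the measure $\Pi(c,\mu)$ never satisfies the hypothesis $\mu(\rr_+\times\{x_0\})=0$ of Theorem \ref{TheoremLKD}, so I must invoke the extended representation of Remark \ref{LKDkorollar} and observe that the integrand $1-e^{-st}\ind{[-\infty,y]}(x)$ vanishes at $(t,x)=(0,x_0)$ for every $y\ge x_0$. Hence any mass of $c\mu$ at $(0,x_0)$ is invisible to $\Psi$, consistent with the normalization $\eta(\{(0,x_0)\})=0$ built into the triple $[x_0,0,c\mu]$. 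I would also note the matching convention $L(s,y)^{0}=\mathcal{L}(\varepsilon_{(0,x_0)})(s,y)$, which holds precisely on $y\ge x_0$; for $y<x_0$ every term vanishes and no positivity is claimed.
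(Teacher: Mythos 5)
Your proof is correct, and its computational core is the same as the paper's: both hinge on showing $\mathcal{L}(\Pi(c,\mu))(s,y)=\exp\bigl(-c(1-\mathcal{L}(\mu)(s,y))\bigr)$ for $y\geq x_0$ and then reading off the C-L exponent $\Psi(s,y)=\int\bigl(1-e^{-st}\ind{[-\infty,y]}(x)\bigr)(c\cdot\mu)(dt,dx)$. Where you diverge is in how infinite divisibility is obtained. The paper deduces it in one stroke from the converse direction of Theorem \ref{TheoremLKD}: since $\Psi$ has the L\'evy--Khintchine form with drift $0$ and L\'evy measure $c\mu$, the measure is $\plusmax$-infinitely divisible with representation $[x_0,0,c\cdot\mu]$. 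You instead construct the $n$-th convolution root explicitly, observing that $\mathcal{L}(\Pi(c/n,\mu))^{n}=\mathcal{L}(\Pi(c,\mu))$ (on $y\geq x_0$, and both sides vanish for $y<x_0$), so uniqueness of the C-L transform (Proposition \ref{EigenschaftenC-L}(c)) gives $\Pi(c,\mu)=\Pi(c/n,\mu)^{\circledast n}$; only afterwards do you match $\Psi$ against \eqref{LKDkorollarFormel} to identify the triple. Your route is more elementary for the divisibility claim, since it needs only the multiplicativity and uniqueness of the transform rather than the full representation theorem. It is also more careful on a point the paper glosses over: $\Pi(c,\mu)$ carries the atom $e^{-c}\varepsilon_{(0,x_0)}$, so the hypothesis $\mu(\rr_+\times\{x_0\})=0$ of Theorem \ref{TheoremLKD} fails for it, and strictly one must pass through Remark \ref{LKDkorollar}; you flag this explicitly, together with the checks that $\Pi(c,\mu)$ is a probability measure and that $c\mu$ satisfies the integrability conditions \eqref{levymass}, both of which the paper leaves tacit. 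What the paper's approach buys in exchange is brevity: once Theorem \ref{TheoremLKD} (with its remark) is accepted, divisibility and the representation come simultaneously, with no need for the root construction.
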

\begin{proof}
See Appendix.
\end{proof}

\begin{lemma}\label{LemmabegleitendeVerteilung2}\ \\
Let $\mu_n ,\nu\in \mathcal{M}^{1}(\Raum)$ for each $n \in \mathbb{N}$ with left endpoints $x_n$ and $x_0$, resp., where either $x_n \rightarrow x_0$ or $x_n\leq x_0$ for each $n\in \mathbb{N}$. Then the following are equivalent:
\begin{itemize}
\item[(i)] $\displaystyle \Pi(n,\mu_{n}) \wKonvergenz \nu \text{ as } n \rightarrow \infty$; 
\item[(ii)] $\displaystyle \mu_{n}^{\circledast n} \wKonvergenz \nu \text{ as } n \rightarrow \infty$.
\end{itemize}
\end{lemma}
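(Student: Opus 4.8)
The plan is to translate both convergence statements into the language of the C-L transform and then reduce the whole equivalence to a single elementary comparison of two real sequences. Write $\varphi_n:=\mathcal{L}(\mu_n)$, so that $\varphi_n(s,y)\in[0,1]$ by Proposition \ref{EigenschaftenC-L}(d). By Proposition \ref{EigenschaftenC-L}(b) the $n$-fold convolution satisfies $\mathcal{L}(\mu_n^{\circledast n})(s,y)=\varphi_n(s,y)^n$. On the other hand, Lemma \ref{LemmabegleitendeVerteilung} gives $\Pi(n,\mu_n)\sim[x_n,0,n\mu_n]$, so its C-L exponent is
\begin{align*}
\Psi_n(s,y)=\int_{\Rplus\times\Rpmu}\bigl(1-e^{-st}\ind{[-\infty,y]}(x)\bigr)\,n\mu_n(dt,dx)=n\bigl(1-\varphi_n(s,y)\bigr),
\end{align*}
since $\mu_n$ is a probability measure; hence for every $y$ above the left endpoint $x_n$ we have $\mathcal{L}(\Pi(n,\mu_n))(s,y)=\exp\bigl(-n(1-\varphi_n(s,y))\bigr)$. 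Thus everything reduces to comparing $\varphi_n(s,y)^n$ with $\exp(-n(1-\varphi_n(s,y)))$ as $n\to\infty$, for each fixed $(s,y)$.

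The analytic core is the elementary inequality
\begin{align*}
\bigl|z^n-e^{-n(1-z)}\bigr|\le n(1-z)^2\qquad(z\in[0,1]),
\end{align*}
which I would prove by factoring $z^n-w^n=(z-w)\sum_{k=0}^{n-1}z^k w^{\,n-1-k}$ with $w:=e^{-(1-z)}\in[0,1]$, bounding the sum by $n$, and estimating $|z-e^{-(1-z)}|\le\tfrac12(1-z)^2$ from the alternating Taylor series of $e^{-(1-z)}$. Combined with the free one-sided bound $\varphi_n^n\le\exp(-n(1-\varphi_n))$ (which is just $e^{-x}\ge 1-x$), this yields the pointwise statement I need: if one of the two sequences converges to a limit $L\in[0,1]$, then so does the other, to the same $L$. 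When $L>0$, convergence of either sequence forces $\varphi_n(s,y)\to 1$ with $n(1-\varphi_n(s,y))$ bounded, so the error term $n(1-\varphi_n)^2\to 0$ and the inequality transfers the limit in both directions. When $L=0$, one uses the sandwich $1-\varphi_n\le -\log\varphi_n\le (1-\varphi_n)/\varphi_n$ to see that $\varphi_n^n\to 0$ and $n(1-\varphi_n)\to\infty$ are equivalent.

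With this comparison in hand, each implication runs through the Continuity Theorem \ref{Stetigkeitssatz}. Assume (ii), i.e. $\mu_n^{\circledast n}\wKonvergenz\nu$. By Theorem \ref{Stetigkeitssatz}(a), $\varphi_n(s,y)^n\to\mathcal{L}(\nu)(s,y)$ at every $y$ where $\mathcal{L}(\nu)(s,\cdot)$ is continuous, which is all but countably many $y$. For $y>x_0$ one has $\mathcal{L}(\nu)(s,y)>0$, because the integrand $e^{-st}\ind{[-\infty,y]}$ is strictly positive on the set $\{x\le y\}$, which carries positive $\nu$-mass; hence the $L>0$ case of the comparison gives $\mathcal{L}(\Pi(n,\mu_n))(s,y)=\exp(-n(1-\varphi_n(s,y)))\to\mathcal{L}(\nu)(s,y)$ there. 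For $y\le x_0$ both transforms tend to $0=\mathcal{L}(\nu)(s,y)$. Since $\lim_{s\downarrow 0}\mathcal{L}(\nu)(s,\infty)=\nu(\Raum)=1$, Theorem \ref{Stetigkeitssatz}(b) then yields $\Pi(n,\mu_n)\wKonvergenz\nu$, which is (i). The converse (i)$\Rightarrow$(ii) is symmetric: starting from $\mathcal{L}(\Pi(n,\mu_n))\to\mathcal{L}(\nu)$ one recovers $\varphi_n^n\to\mathcal{L}(\nu)$ by the same comparison (and on $\{y\le x_0\}$ the easy bound $\varphi_n^n\le\exp(-n(1-\varphi_n))$ already forces $\varphi_n^n\to 0$), then applies Theorem \ref{Stetigkeitssatz}(b) again.

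The main obstacle is the behaviour on $\{y\le x_0\}$ in the direction (ii)$\Rightarrow$(i), where the common limit is $0$ and the clean inequality $|z^n-e^{-n(1-z)}|\le n(1-z)^2$ no longer bounds the gap between the two transforms. There one must genuinely show that $\varphi_n^n\to 0$ implies $\exp(-n(1-\varphi_n))\to 0$, i.e. $n(1-\varphi_n(s,y))\to\infty$; the argument is to use the marginal bound $\varphi_n(s,y)\le\mathcal{L}(\mu_n)(0,y)=F_n(y)$, where $F_n$ is the second-marginal CDF, together with the fact that the second-coordinate projection of (ii) gives $F_n(y)^n\to 0$ for $y<x_0$, which forces $n(1-F_n(y))\to\infty$ and hence $n(1-\varphi_n)\to\infty$. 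The remaining bookkeeping — the two regimes $x_n\le x_0$ and $x_n\to x_0$ for the left endpoints, and the reduction of the required exponent convergence to the range $y>x_0$ exactly as in the Remark preceding Lemma \ref{Konvergenzsatz1} — is the other point demanding care but should be routine.
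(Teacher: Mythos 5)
Your proposal is correct and follows essentially the same route as the paper: both reduce the statement, via Proposition \ref{EigenschaftenC-L}, Lemma \ref{LemmabegleitendeVerteilung} and the Continuity Theorem \ref{Stetigkeitssatz}, to the pointwise comparison of $\mathcal{L}(\mu_n)^n(s,y)$ with $\exp\bigl(-n(1-\mathcal{L}(\mu_n)(s,y))\bigr)$. The only difference is in execution: the paper runs the comparison through the asymptotic equivalence $\log z\sim z-1$ as $z\to 1$, which it needs only for $y>x_0$ where the limit $\mathcal{L}(\nu)(s,y)$ is positive, whereas you use a quantitative inequality together with a sandwich argument that also covers the zero-limit region $y\le x_0$ explicitly — a region the paper treats only implicitly.
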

\begin{proof}
See Appendix.
\end{proof}

Finally, the following theorem gives convergence criteria for triangular arrays on  $(\Raum,\plusmax)$.
\begin{theorem}\label{Konvergenzsatz2}\ \\
Let $\mu_n  \in  \mathcal{M}^1(\Raum)$ for each $n \in \mathbb{N}$ with left endpoint $x_n$. Then $\mu_n^{\circledast n }\wKonvergenz \nu$ as $n \rightarrow \infty$, where $\nu$ $\plusmax$-infinite divisible, $\nu \sim \left[x_0,0,\Phi \right]$ (where either $x_n \longrightarrow x_0$ or $x_n\leq x_0$ for all $n \in \mathbb{N}$) if and only if
\begin{itemize}
\item[(a)] $n \cdot \mu_{n} \xrightarrow[]{v^{'}} \Phi$ and
\item[(b)] $\underset{\epsilon \downarrow 0}{\lim}\underset{n \rightarrow \infty}{\overline{\lim}} n \cdot \int_{\left\{0 \leq t<\epsilon\right\}}t \ \mu_{n}(dt,\Rpmu)=0$.
\end{itemize}
\begin{proof}
With Lemma \ref{LemmabegleitendeVerteilung2} and Theorem \ref{Konvergenzsatz1} this assertion now follows easily. In view of Lemma \ref{LemmabegleitendeVerteilung2} $\mu_n^{\circledast n }\wKonvergenz \nu$ is equivalent to $\Pi(n,\mu_{n}) \wKonvergenz \nu$. With Lemma \ref{LemmabegleitendeVerteilung} we know that $\Pi(n,\mu_{n}) \sim \left[x_n,0, n \cdot \mu_{n} \right]$. Hence we get with Lemma \ref{Konvergenzsatz1} that $\Pi(n,\mu_{n}) \wKonvergenz \nu$ is equivalent to
\begin{align*}
n \cdot \mu_{n} \vageKonvergenz \Phi  \ \text{  and  } \ \underset{\epsilon \downarrow 0}{\lim}\underset{n \rightarrow \infty}{\overline{\lim}} n \cdot \int_{\left\{0 \leq t<\epsilon\right\}}t \ \mu_{n}(dt,\Rpmu)=0,
\end{align*}
where $\nu \sim \left[x_0,0,\Phi\right]$.
\end{proof}
\end{theorem}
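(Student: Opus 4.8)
The plan is to avoid any direct analysis of the $n$-fold convolution power $\mu_n^{\circledast n}$ and instead transfer the whole question to a sequence of genuinely $\plusmax$-infinite divisible measures, namely the accompanying laws $\Pi(n,\mu_n)$, for which the convergence criterion of Lemma \ref{Konvergenzsatz1} applies verbatim. The proof is then a chain of three equivalences, each supplied by an earlier result.

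First I would invoke Lemma \ref{LemmabegleitendeVerteilung2}, whose precise role is to guarantee that $\mu_n^{\circledast n} \wKonvergenz \nu$ holds if and only if $\Pi(n,\mu_n) \wKonvergenz \nu$. This replaces the combinatorially awkward convolution power by the Poisson-type mixture at no cost to the limit, and it is exactly here that the left-endpoint dichotomy (either $x_n \to x_0$, or $x_n \le x_0$ for all $n$) is consumed. Next I would read off the canonical triplet of the accompanying law: by Lemma \ref{LemmabegleitendeVerteilung} we have $\Pi(c,\mu) \sim [x_0,0,c\cdot\mu]$, so specializing to $c=n$ and $\mu=\mu_n$ yields $\Pi(n,\mu_n) \sim [x_n,0,n\cdot\mu_n]$. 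The key structural features are that the drift vanishes, $a_n \equiv 0$, and that the Lévy measure is precisely $n\cdot\mu_n$.

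Finally I would apply Lemma \ref{Konvergenzsatz1} to the $\plusmax$-infinite divisible sequence $\Pi(n,\mu_n)\sim[x_n,0,n\cdot\mu_n]$ converging to $\nu \sim [x_0,0,\Phi]$. Condition (a) of that lemma becomes $a_n=0 \to a=0$ and is vacuous; condition (b), $\eta_n = n\cdot\mu_n \vageKonvergenz \Phi$, is precisely hypothesis (a) of the theorem; and condition (c), the truncation control $\lim_{\epsilon \downarrow 0}\overline{\lim}_{n} \int_{\{0 \le t < \epsilon\}} t\,\eta_n(dt,\Rpmu)=0$ with $\eta_n = n\cdot\mu_n$, is precisely hypothesis (b). Concatenating the three equivalences closes the argument in both directions simultaneously.

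At this level the substantive work has already been discharged inside the supporting lemmas, so the only genuine obstacle is bookkeeping rather than analysis: one must verify that $\Pi(n,\mu_n)$ really inherits the left endpoint $x_n$ and that the hypothesis on $(x_n)$ is exactly the one Lemma \ref{Konvergenzsatz1} requires, and one must confirm that forcing $a_n\equiv 0$ hides no mass silently migrating into the drift — the latter being controlled by hypothesis (b), which is the single nontrivial condition transmitted unchanged from $\mu_n^{\circledast n}$ to its infinitely divisible approximation. The deeper difficulty, as expected, lives not in this theorem but in the appendix proofs of Lemma \ref{LemmabegleitendeVerteilung2} and Lemma \ref{Konvergenzsatz1}.
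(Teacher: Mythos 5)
Your proposal is correct and follows exactly the paper's own argument: reduce to the accompanying laws $\Pi(n,\mu_n)$ via Lemma \ref{LemmabegleitendeVerteilung2}, read off the representation $\Pi(n,\mu_n)\sim[x_n,0,n\cdot\mu_n]$ from Lemma \ref{LemmabegleitendeVerteilung}, and then apply the convergence criterion of Lemma \ref{Konvergenzsatz1}. Your additional remarks on the vanishing drift and the left-endpoint hypothesis are consistent with how these lemmas are stated, so nothing is missing.
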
 

\section{Joint convergence}

This section contains the main results of this paper. Using the methods developed in Section 2 above, we answer questions (i), (ii) and (iii) from the introduction. This will be done by
first considering the case that $A$ in \eqref{AnnahmeEinleitung} has an $\alpha$-Fr\'echet distribution for some $\alpha>0$. The general case will then be dealt with, by transforming the second component in \eqref{AnnahmeEinleitung} to the 1-Fr\'echet case, a standard technique in multivariate extreme value theory (see e.g. \cite{resnick}, p. 265).  

Our first result partially answers question (i). As expected, the non-degenerate limit distributions in \eqref{AnnahmeEinleitung} are sum-max stable in the sense of the following definition.

\begin{defi}\label{defsummaxstab}
Let $(D,A)$ be a $\rr_+\times\rr$-valued random vector with non-degenerate marginals. We say that $(D,A)$ is {\it sum-max stable}, if for all $n\geq 1$ there exist numbers $a_n,b_n>0$ and $c_n\in\rr$ such  that for i.i.d. copies $(D_1,A_1),\dots,(D_n,A_n)$ of $(D,A)$ we have
\[(D_1,A_1)\plusmax\cdots\plusmax(D_n,A_n)=\bigl(D_1+\cdots+D_n,A_1\vee\dots\vee A_n)\eqd(a_n^{-1}D,b_n^{-1}A+c_n) .\]
\end{defi}

\begin{theorem}
Let $(D,A)$ be $\rr_+\times\rr$-valued with non-degenerate marginals. Then $(D,A)$ is sum-max stable if and only if $(D,A)$ is a limit distribution in \eqref{AnnahmeEinleitung}.
\end{theorem}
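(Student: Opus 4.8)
The plan is to prove the two implications separately, the forward one being immediate and the converse being the substantial part. For the forward direction, suppose $(D,A)$ is sum-max stable and simply take the driving sequence to be i.i.d.\ copies of $(D,A)$ itself, i.e.\ $(W_i,J_i):=(D_i,A_i)$, so that $S(n)=D_1+\cdots+D_n$ and $M(n)=A_1\vee\cdots\vee A_n$. Applying the map $(t,x)\mapsto(a_n t, b_n(x-c_n))$ to the stability identity $(S(n),M(n))\eqd(a_n^{-1}D,\,b_n^{-1}A+c_n)$ and using $b_n>0$ yields $(a_n S(n),\,b_n(M(n)-c_n))\eqd(D,A)$ for every $n$. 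Thus the convergence in \eqref{AnnahmeEinleitung} holds trivially (with exact equality at each stage), so $(D,A)$ is a limit distribution.

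For the converse, assume \eqref{AnnahmeEinleitung}, i.e.\ there is an i.i.d.\ sequence $(W_i,J_i)$ and constants $a_n,b_n>0$, $c_n\in\rr$ with $(a_n S(n),\,b_n(M(n)-c_n))\Rightarrow(D,A)$. For $a,b>0$ and $c\in\rr$ let $T_{a,b,c}(t,x):=(a t,\,b(x-c))$; since $b>0$ one checks directly that $T_{a,b,c}$ is a homomorphism of $(\rr_+\times\rr,\plusmax)$. Fix $k\ge1$ and split the first $nk$ summands into $k$ consecutive blocks of length $n$; this gives that $(S(nk),M(nk))$ equals in distribution the $\plusmax$-convolution of $k$ i.i.d.\ copies of $(S(n),M(n))$. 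Because $T_{a_n,b_n,c_n}$ is a homomorphism, $T_{a_n,b_n,c_n}(S(nk),M(nk))$ is therefore the $\plusmax$-sum of $k$ i.i.d.\ copies of $(a_n S(n),\,b_n(M(n)-c_n))$, each of which converges weakly to $(D,A)$. By the product property of the C-L transform (Proposition~\ref{EigenschaftenC-L}(b)) together with the continuity theorem (Theorem~\ref{Stetigkeitssatz}), the $k$-fold $\plusmax$-convolution is continuous for weak convergence, so $T_{a_n,b_n,c_n}(S(nk),M(nk))\Rightarrow Y_k:=(D_1+\cdots+D_k,\,A_1\vee\cdots\vee A_k)$, where $(D_j,A_j)$ are i.i.d.\ copies of $(D,A)$.

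On the other hand, reading \eqref{AnnahmeEinleitung} along the subsequence $nk$ gives $T_{a_{nk},b_{nk},c_{nk}}(S(nk),M(nk))\Rightarrow(D,A)$. Writing $R_n:=T_{a_n,b_n,c_n}\circ T_{a_{nk},b_{nk},c_{nk}}^{-1}$, which is again of admissible product-affine form, we have $T_{a_n,b_n,c_n}(S(nk),M(nk))=R_n\bigl(T_{a_{nk},b_{nk},c_{nk}}(S(nk),M(nk))\bigr)$: two weakly convergent sequences whose arguments differ by $R_n$, with both limits $(D,A)$ and $Y_k$ non-degenerate in each marginal. The convergence-of-types theorem, applied separately in the two coordinates — multiplicative scaling $t\mapsto\alpha t$ on $\rr_+$ for the first component and the affine group $x\mapsto\beta(x-\gamma)$ on $\rr$ for the second — forces the coordinatewise constants of $R_n$ to converge, hence $R_n\to R_k=T_{\alpha_k,\beta_k,\gamma_k}$, and then $Y_k\eqd R_k(D,A)=(\alpha_k D,\,\beta_k(A-\gamma_k))$. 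Setting $\tilde a_k:=\alpha_k^{-1}$, $\tilde b_k:=\beta_k^{-1}$ and $\tilde c_k:=-\beta_k\gamma_k$, this reads $(D_1+\cdots+D_k,\,A_1\vee\cdots\vee A_k)\eqd(\tilde a_k^{-1}D,\,\tilde b_k^{-1}A+\tilde c_k)$, which is exactly the defining relation of sum-max stability. As $k\ge1$ was arbitrary, $(D,A)$ is sum-max stable.

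I expect the main obstacle to be the joint convergence-of-types step: one must guarantee that the scaling constants in the two coordinates can be chosen consistently so that the \emph{single} transformation $R_n$ converges. This is precisely where non-degeneracy of both marginals is essential, since it lets $D$ pin down $\alpha_k$ and $A$ pin down the pair $(\beta_k,\gamma_k)$; the argument then reduces to the classical coordinatewise convergence-of-types theorems, after which $U_n\Rightarrow(D,A)$ together with $R_n\to R_k$ gives $R_n U_n\Rightarrow R_k(D,A)$ by a Slutsky-type lemma for converging transformations. A minor point to check is that $R_n$ is genuinely of the admissible form and invertible for all large $n$, which holds because $a_{nk},b_{nk}>0$.
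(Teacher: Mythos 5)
Your proof is correct and follows essentially the same route as the paper: the trivial forward direction, then for the converse a split of the first $nk$ terms into $k$ blocks of length $n$, joint convergence of the $\plusmax$-combination of the rescaled blocks to $(D_1,A_1)\plusmax\cdots\plusmax(D_k,A_k)$, and a coordinatewise convergence-of-types argument comparing the normalizations $(a_n,b_n,c_n)$ and $(a_{nk},b_{nk},c_{nk})$. Your bookkeeping via the homomorphism $T_{a,b,c}$ and the composed map $R_n$ is just a more explicit (and in fact cleaner) rendering of the paper's step where the ratios $a_{nk}/a_n$, $b_{nk}/b_n$ and the centering constants are shown to converge.
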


\begin{proof}
Trivially, every sum-max stable random vector is a limit distribution in \eqref{AnnahmeEinleitung}. Now assume that $(D,A)$ is a non-degenerate limit distribution in \eqref{AnnahmeEinleitung}. Fix any $k\geq 2$. Then we have
\begin{equation}\label{stern2}
\bigl(a_{nk}S(nk),b_{nk}(M(nk)-c_{nk})\bigr)\Longrightarrow (D,A)\quad\text{as $n\to\infty$.}
\end{equation}
For $i=1,\dots,k$ let
\[ (S_n^{(i)},M_n^{(i)})=\bigl(\sum_{j=1}^nW_{n(i-1)+j},\bigvee_{j=1}^n J_{n(i-1)+j}\bigr) \]
so that $(S_n^{(1)},M_n^{(1)}),\dots,(S_n^{(k)},M_n^{(k)})$ are i.i.d. Moreover, by \eqref{AnnahmeEinleitung}
\[ (a_nS_n^{(i)},b_n(M_n^{(i)}-c_n))\Longrightarrow (D_i,A_i)\quad\text{as $n\to\infty$,}\]
where $(D_1,A_1),\dots,(D_k,A_k)$ are i.i.d. copies of $(D,A)$. Then we have
\begin{multline*}
\bigl(a_nS_n^{(1)},b_n(M_n^{(1)}-c_n)\bigr)\plusmax\cdots\plusmax\bigl(a_nS_n^{(k)},b_n(M_n^{(k)}-c_n)\bigr) \\ =\bigl(a_nS(nk),b_n(M(nk)-c_n)\bigr)  \Longrightarrow (D_1,A_1)\plusmax\cdots\plusmax (D_k,A_k)\quad\text{as $n\to\infty$.}
\end{multline*}
Hence, in view of \eqref{stern2}, convergence of types yields 
\[ \frac{a_{nk}}{a_n}\to \tilde a_k>0,\quad \frac{b_{nk}}{b_n}\to \tilde b_k>0\quad\text{and}\quad b_{nk}c_n-c_nb_{nk}\to \tilde c_k \]
as $n\to\infty$ and therefore
\[ (D_1,A_1)\plusmax\cdots\plusmax (D_k,A_k)\eqd (\tilde a_k^{-1}D,\tilde b_k^{-1}A+\tilde c_k) \]
so $(D,A)$ is sum-max stable.
\end{proof}

\begin{defi}\label{Definitionstabil1} Let $(D,A)$ be a $\Rplus \times \mathbb{R}$-valued random vector.
We say that the random vector $(W,J)$  belongs to the \textit{sum-max domain of attraction} of $(D,A)$,  if \eqref{AnnahmeEinleitung} holds for i.i.d.\ copies $(W_i,J_i)$  of $(W,J)$. We write $(W,J) \in \text{sum-max-DOA}(D,A)$. If $c_n=0$ in \eqref{AnnahmeEinleitung}, we say $(W,J)$ belongs to the strict sum-max-DOA of $(D,A)$ and write $(W,J) \in \text{sum-max-DOA}_{\textbf{S}}(D,A)$.
\end{defi}

\begin{cor}
Let $(D,A)$ be $\rr_+\times\rr$-valued with non-degenerate marginals. Then $(D,A)$ is sum-max stable if and only if $\text{sum-max-DOA}(D,A)\neq\emptyset$.
\end{cor}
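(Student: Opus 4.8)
The plan is to read this off from the preceding Theorem together with Definition~\ref{Definitionstabil1}. Indeed, by Definition~\ref{Definitionstabil1} the statement $\text{sum-max-DOA}(D,A)\neq\emptyset$ means precisely that there exists an $\rr_+\times\rr$-valued random vector $(W,J)$ whose i.i.d.\ copies satisfy \eqref{AnnahmeEinleitung} with limit $(D,A)$; equivalently, that $(D,A)$ is a limit distribution in \eqref{AnnahmeEinleitung} in the sense of that Theorem. So both implications are essentially formal once this dictionary is in place.

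For the reverse implication (nonempty DOA $\Rightarrow$ stable) I would simply pick any $(W,J)\in\text{sum-max-DOA}(D,A)$. Then \eqref{AnnahmeEinleitung} holds for its i.i.d.\ copies, so $(D,A)$ is a non-degenerate limit distribution in \eqref{AnnahmeEinleitung}, and the preceding Theorem immediately gives that $(D,A)$ is sum-max stable.

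For the forward implication (stable $\Rightarrow$ nonempty DOA) the cleanest route is self-attraction, which does not even need the Theorem. Since $(D,A)$ is itself $\rr_+\times\rr$-valued, take $(W,J)\eqd(D,A)$. By Definition~\ref{defsummaxstab} there are $a_n,b_n>0$, $c_n\in\rr$ with $(\sum_{i=1}^nW_i,\bigvee_{i=1}^nJ_i)\eqd(a_n^{-1}D,b_n^{-1}A+c_n)$ for every $n$, and rescaling gives
\[(a_nS(n),b_n(M(n)-c_n))\eqd(D,A)\qquad\text{for all }n.\]
This constant-in-distribution sequence trivially converges weakly to $(D,A)$, so \eqref{AnnahmeEinleitung} holds and $(W,J)\in\text{sum-max-DOA}(D,A)$, proving the DOA is nonempty.

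The only point requiring a moment of care --- and the nearest thing to an obstacle --- is bookkeeping rather than substance: one must check that the non-degeneracy of the marginals assumed for $(D,A)$ is exactly what is needed to invoke Definition~\ref{defsummaxstab} and the Theorem, and that the normalizing constants produced by stability give the claimed exact equality in distribution, so that the limit in \eqref{AnnahmeEinleitung} is attained by a constant sequence. No regular-variation or harmonic-analytic input is needed; the Corollary is a formal consequence of the stability/limit equivalence already established.
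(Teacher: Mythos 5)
Your proof is correct and matches the paper's intent: the corollary is stated without proof precisely because it follows immediately from the preceding Theorem together with Definition~\ref{Definitionstabil1}, and your forward direction (self-attraction, with the constant-in-distribution sequence $(a_nS(n),b_n(M(n)-c_n))\eqd(D,A)$) is exactly the "trivial" direction the paper invokes in its proof of that Theorem. Nothing is missing.
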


The next theorem characterizes the sum-max domain of attraction of $(D,A)$ in the case where $A$ has an $\alpha$-Fr\'echet distribution.

\begin{theorem}\label{Theoremjointsum/max} Let $(W,J),(W_i,J_i)_{i \in \mathbb{N}}$ be i.i.d. $\Rplus \times \mathbb{R}$-valued random vectors. Furthermore assume that $(D,A)$ is a $\Rplus \times \mathbb{R}$-valued random vector, where $D$ is strictly $\beta$-stable with $0<\beta<1$ and $A$ is $\alpha$-Fr\'echet distributed with $\alpha > 0$. Then the following are equivalent:
\begin{itemize}
\item[(a)] $(W,J) \in \textnormal{sum-max-DOA}_{S}(D,A)$.
\item[(b)] There exist sequences $(a_n)_{n\in \mathbb{N}}$,$(b_n)_{n \in \mathbb{N}}$ with $a_n,b_n>0$ such that \[n \cdot P_{(a_nW,b_nJ)} \vKonvergenzninfty \eta,\] where $\eta$ is a L\'evy measure on $(\Raum,\plusmax)$.   \label{gemvageKonvergenz}
\end{itemize}

Then $(D,A)$ is sum-max stable and has the L\'evy representation $[0,0,\eta]$. We can use the same sequences $(a_n)_{n \in \mathbb{N}}$ and $(b_n)_{n \in \mathbb{N}}$ in (a) and (b). Furthermore $(a_n)$ is regularly varying with index $-1/\beta$ and $(b_n)$ is regularly varying with index $-1/\alpha$.
\end{theorem}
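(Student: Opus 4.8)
The plan is to reduce everything to the triangular-array criterion of Theorem \ref{Konvergenzsatz2}, applied to the row measures $\mu_n := P_{(a_nW,b_nJ)}$. Writing out the sum/max convolution, the $n$-fold power $\mu_n^{\circledast n}$ is exactly the law of $(a_nS(n),b_nM(n))$, so the strict domain-of-attraction condition in (a) (where $c_n=0$) is literally the statement $\mu_n^{\circledast n}\wKonvergenz P_{(D,A)}$. Since $A$ is $\alpha$-Fr\'echet its left endpoint is $x_0=0$ with no atom there, so $P_{(D,A)}(\rr_+\times\{0\})=0$ and the left-endpoint hypotheses of Theorem \ref{Konvergenzsatz2} and Theorem \ref{TheoremLKD} are met.

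First I would settle the structural facts about the limit. That $(D,A)$ is sum-max stable is immediate from the theorem preceding Definition \ref{Definitionstabil1}, since under (a) the vector $(D,A)$ is a non-degenerate limit in \eqref{AnnahmeEinleitung}. To see that $P_{(D,A)}$ is $\plusmax$-infinitely divisible with representation $[0,0,\eta]$, I would pass to the accompanying laws: by Lemma \ref{LemmabegleitendeVerteilung} each $\Pi(n,\mu_n)$ is $\plusmax$-infinitely divisible with $\Pi(n,\mu_n)\sim[x_n,0,n\mu_n]$, by Lemma \ref{LemmabegleitendeVerteilung2} it has the same weak limit as $\mu_n^{\circledast n}$, and by Lemma \ref{lemmawGrenzunendteil} that limit $P_{(D,A)}$ is $\plusmax$-infinitely divisible. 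Theorem \ref{TheoremLKD} then gives $P_{(D,A)}\sim[0,a,\eta]$, and evaluating the C-L exponent at $y=\infty$ shows $\Psi(s,\infty)=a\,s+\int_{\rr_+}(1-e^{-st})\eta(dt,[0,\infty])$ is the Laplace exponent of $D$; since $D$ is strictly $\beta$-stable with $\beta<1$ this exponent is a pure power $Cs^\beta$ with no linear term, forcing $a=0$.

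With these in hand the equivalence (a)$\Leftrightarrow$(b) is just the two directions of Theorem \ref{Konvergenzsatz2} applied to $\nu=P_{(D,A)}\sim[0,0,\eta]$. The direction (a)$\Rightarrow$(b) is the ``only if'' half: from $\mu_n^{\circledast n}\wKonvergenz\nu$ we read off condition (a) of that theorem, namely $n\mu_n\vageKonvergenz\eta$, which is precisely (b); the same norming sequences are used throughout, which yields the assertion that $(a_n),(b_n)$ may be shared between (a) and (b). The converse (b)$\Rightarrow$(a) is the ``if'' half, and here lies the only real obstacle: Theorem \ref{Konvergenzsatz2} also demands the truncated first-moment condition $\lim_{\epsilon\downarrow0}\overline{\lim}_n\,n\int_{\{0\le t<\epsilon\}}t\,\mu_n(dt,\Rpmu)=0$, which is \emph{not} contained in the vague convergence asserted in (b). I expect to dispatch this by a one-dimensional Karamata argument in the sum coordinate only, since $\mu_n(dt,\Rpmu)=P_{a_nW}(dt)$: vague convergence of the first marginal $nP_{a_nW}$ to the $\beta$-stable L\'evy measure $\eta(\cdot,[0,\infty])=ct^{-1-\beta}dt$ forces $P(W>\cdot)\in\RV_{-\beta}$, and Karamata's theorem then gives $n\int_{[0,\epsilon)}t\,P_{a_nW}(dt)\to\frac{c\beta}{1-\beta}\,\epsilon^{1-\beta}$, which tends to $0$ as $\epsilon\downarrow0$ exactly because $1-\beta>0$. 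This is the crux where the hypothesis $0<\beta<1$ is genuinely used.

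Finally, for the regular-variation statements I would project $\mu_n^{\circledast n}\wKonvergenz P_{(D,A)}$ onto each coordinate: the first gives $a_nS(n)\Rightarrow D$ and the second gives $b_nM(n)\Rightarrow A$. Classical sum-stable theory then forces $(a_n)\in\RV_{-1/\beta}$ and classical Fr\'echet extreme-value theory forces $(b_n)\in\RV_{-1/\alpha}$. The remaining bookkeeping is routine and I would keep it brief: the left-endpoint alternative ``$x_n\to x_0$ or $x_n\le x_0$'' holds with $x_0=0$ because $b_n\to0$ scales the left endpoint of $J$ either down to $0$ or to a value $\le0$, and the identification that the limit $\nu\sim[0,0,\eta]$ produced in (b)$\Rightarrow$(a) really has marginals $D$ and $A$ follows by checking that its marginal C-L exponents ($y=\infty$ and $s=0$) match those of the strictly $\beta$-stable and $\alpha$-Fr\'echet laws.
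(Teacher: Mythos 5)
Your proposal is correct and follows essentially the same route as the paper: both reduce the equivalence to the triangular-array criterion of Theorem \ref{Konvergenzsatz2} applied to $\mu_n=P_{(a_nW,b_nJ)}$, handle the left-endpoint hypothesis the same way, obtain the truncated-moment condition in the direction (b)$\Rightarrow$(a) from one-dimensional regular variation of the tail of $W$ (the paper cites Theorem 8.2.10 of \cite{thebook}, which is exactly your Karamata computation using $0<\beta<1$), and get the regular variation of $(a_n),(b_n)$ by projecting onto the coordinates. Your explicit verification that the drift vanishes (comparing $\Psi(s,\infty)$ with the pure power Laplace exponent $Cs^\beta$) is a sound way of making precise what the paper absorbs into the statement of Theorem \ref{Konvergenzsatz2}.
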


\begin{remark}
Since the left endpoint of the Fr\'echet distribution is $x_0=0$, the convergence in (b) means $n \cdot P_{(a_nW,b_nJ)}(B) \overset{v'}{\longrightarrow} \eta(B)$ as $n \rightarrow \infty$ for all $B \in \mathcal{B}(\Rplus^2)$ with $\eta(\partial B)=0$ and $\textnormal{dist}((0,0),B)>0$.
\end{remark}

\begin{proof}
That assertion (a) implies (b) follows directly with Theorem \ref{Konvergenzsatz2}. We assume that for sequences $(a_n)_{n \in \mathbb{N}}, (b_n)_{n \in \mathbb{N}}$ with $a_n>0$ and $b_n>0$ we have
\begin{align}\label{FrechetDOA1}
(a_nS(n),b_nM(n)) \WKonvergenz (D,A).
\end{align}
We denote \[\mu_n:=P_{(a_nW,b_nJ)} \hspace{0.5cm} \text{ and } \hspace{0.5cm} \mu:=P_{(W,J)}.\] 
Since $(W_i,J_i)_{i \in \mathbb{N}}$ are i.i.d. and distributed as $(W,J)$ equation \eqref{FrechetDOA1} is equivalent to
\begin{align*}
\mu_n^{\circledast n} \wKonvergenzninfty P_{(D,A)}, \text{ where } P_{(D,A)} \sim \left[0,0,\eta \right].
\end{align*}
Let $F(x)=P\{J\leq x\}$ denote the distribution function of $J$.  In case that the left endpoint of $F$ is $-\infty$, the left endpoint of $F(b_n^{-1}x)$ is equal to $-\infty$ for each $n$. If the left endpoint of $F$ is any real number, the left endpoint of $F(b_n^{-1}x)$ converges as $n \rightarrow \infty$ to $x_0=0$. With Theorem \ref{Konvergenzsatz2} it then follows that \[n \cdot P_{(a_nW,b_nJ)} \vKonvergenzninfty \eta.\]
That (b) implies (a) follows with Theorem \ref{Konvergenzsatz2} as well, if we show that 
\begin{align}\label{FrechetDOA2}
n \cdot P_{(a_n W, b_n J)} \vKonvergenzninfty \eta 
\end{align}
implies that
\begin{align*}
 \ \underset{\epsilon \downarrow 0}{\lim}\underset{n \rightarrow \infty}{\overline{\lim}} n \cdot \int_{\left\{0\leq t<\epsilon\right\}} t \ \mu_n(dt,\mathbb{R})=0.
\end{align*}
Due to $W\in \text{DOA}_{S}(D)$ this follows as in the proof for the domain of attraction theorem for stable distributions (see Theorem 8.2.10 in \cite{thebook}). That the sequences $(a_n)_{n \in \mathbb{N}}$ and $(b_n)_{n \in \mathbb{N}}$ are regularly varying follows from the classical theories by projecting on either component.
\end{proof}

The measure $\eta$ in (b) in Theorem \ref{gemvageKonvergenz} has a scaling property as shown next.
\begin{cor} \label{levymaprop} \ \\
For the L\'evy measure $\eta$ in (b) of Theorem \ref{gemvageKonvergenz} we have for all $B \in \mathcal{B}(\Rplus^2)$ that
\begin{align}
(t \cdot \eta) (B) = (t^{E} \eta)(B) \text{ for all } t>0 \label{skalproperty}
\end{align}
with $E=\textnormal{diag}(1/\beta,1/\alpha)$, where $t^E=\diag(t^{1/\beta},t^{1/\alpha})$.
\end{cor}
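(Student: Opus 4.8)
The plan is to read off the scaling relation directly from the vague convergence in part (b) of Theorem \ref{gemvageKonvergenz}, exploiting the regular variation of the norming sequences. Write $\nu:=P_{(W,J)}$ and $\mu_n:=P_{(a_nW,b_nJ)}=D_n(\nu)$, where $D_n=D_{a_n,b_n}$ denotes the push-forward under the diagonal map $(x_1,x_2)\mapsto(a_nx_1,b_nx_2)$. Part (b) gives $n\,\mu_n\vageKonvergenz\eta$, and the theorem tells us that $(a_n)$ is regularly varying with index $-1/\beta$ and $(b_n)$ with index $-1/\alpha$. Fix $t>0$ and set $m=m(n):=\lfloor nt\rfloor$. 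Since $m\to\infty$, relabelling the index gives $m\,\mu_m\vageKonvergenz\eta$ as well.

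The next step is to re-express $m\,\mu_m$ through $n\,\mu_n$. Because $D_m=D_{a_m/a_n,\,b_m/b_n}\circ D_n$ and push-forward commutes with scalar multiples,
\[ m\,\mu_m=\tfrac{m}{n}\,D_{a_m/a_n,\,b_m/b_n}\bigl(n\,\mu_n\bigr). \]
Regular variation yields $m/n\to t$, $a_m/a_n\to t^{-1/\beta}$ and $b_m/b_n\to t^{-1/\alpha}$, so the diagonal maps $D_{a_m/a_n,\,b_m/b_n}$ converge to the fixed invertible map $t^{-E}:=\diag(t^{-1/\beta},t^{-1/\alpha})$ and the scalar prefactors converge to $t$. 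Passing to the limit in both expressions for $m\,\mu_m$ should give $\eta=t\cdot(t^{-E}\eta)$, where $t^{-E}\eta$ denotes the image of $\eta$ under $t^{-E}$. Replacing $B$ by $t^{-E}B$ then yields $\eta(t^{-E}B)=t\,\eta(B)$, i.e. $(t\cdot\eta)(B)=(t^E\eta)(B)$ for every $B\in\mathcal B(\Rplus^2)$, which is precisely \eqref{skalproperty}.

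I expect the main obstacle to be the justification of this last limit passage: one must show that applying diagonal maps $D_{a_m/a_n,\,b_m/b_n}$ converging to the invertible limit $t^{-E}$, together with scalar factors $m/n\to t$, to the $v'$-convergent sequence $n\,\mu_n\vageKonvergenz\eta$ produces exactly $t\cdot(t^{-E}\eta)$. This is a \emph{convergence together} statement for the vague topology $v'$, and it requires care because $v'$-convergence only tests sets $B$ with $\mathrm{dist}((0,0),B)>0$. The key points are that the diagonal scalings $t^{-E}$ map sets bounded away from the origin to sets bounded away from the origin and act continuously there; testing against a convergence-determining family of $\eta$-continuity rectangles bounded away from $(0,0)$, the uniform control of the perturbation $D_{a_m/a_n,\,b_m/b_n}\to t^{-E}$ transfers $n\,\mu_n\vageKonvergenz\eta$ to $D_{a_m/a_n,\,b_m/b_n}(n\,\mu_n)\vageKonvergenz t^{-E}\eta$, while the harmless scalar factor $m/n\to t$ contributes the prefactor $t$.

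As a cross-check one could instead argue via the C-L exponent. Sum-max stability with $c_n=0$, $a_n=n^{-1/\beta}$, $b_n=n^{-1/\alpha}$ translates, after taking C-L transforms of $\mu^{\circledast n}=D_n(\mu)$, into the functional equation $n\,\Psi(s,y)=\Psi\bigl(n^{1/\beta}s,\,n^{-1/\alpha}y\bigr)$ for the C-L exponent $\Psi\sim[0,0,\eta]$; substituting the L\'evy--Khintchine representation \eqref{LKD} and invoking the uniqueness of that representation (Theorem \ref{TheoremLKD}) forces the same scaling on $\eta$ for integer $t=n$, after which $\plusmax$-infinite divisibility extends it to all real $t>0$. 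I nonetheless prefer the first route, since part (b) already hands us both the vague convergence and the regular variation of the norming constants.
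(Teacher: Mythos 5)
Your proposal is correct, but it follows a genuinely different route from the paper's. You stay entirely at the level of the vague convergence in (b): writing $\mu_n=P_{(a_nW,b_nJ)}$, you reindex at $m=\lfloor nt\rfloor$, use the identity $m\mu_m=\tfrac{m}{n}D_{a_m/a_n,\,b_m/b_n}(n\mu_n)$ together with the regular variation of $(a_n)$ and $(b_n)$, and pass to the limit via a mapping (convergence-together) theorem for $v'$-convergence --- which you correctly single out as the one step needing justification. That lemma is the standard device from multivariate regular variation of measures (it is essentially how the scaling property of limit measures is obtained in \cite{thebook}, whose Definition 4.2.8 the paper already invokes), and your sketch --- preimages under maps converging to the invertible $t^{-E}$ of sets bounded away from the origin stay uniformly bounded away from the origin, so one can test on $\eta$-continuity sets --- is the right one. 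The paper instead works one level up, with the laws of $\bigl(a_nS(\lfloor nt\rfloor),b_nM(\lfloor nt\rfloor)\bigr)$: it computes their weak limit twice, once through C-L transforms, giving the distribution with C-L transform $\mathcal{L}(P_{(D,A)})^t$ and hence L\'evy representation $[0,0,t\cdot\eta]$, and once by re-normalizing with $a_na_{\lfloor nt\rfloor}^{-1}\to t^{1/\beta}$, $b_nb_{\lfloor nt\rfloor}^{-1}\to t^{1/\alpha}$, giving $P_{t^E(D,A)}\sim[0,0,t^E\eta]$; it then concludes from the uniqueness of the L\'evy--Khintchine representation (Theorem \ref{TheoremLKD}). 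Your route buys independence from the C-L machinery: you never need to define the fractional power $P_{(D,A)}^t$ or invoke uniqueness of the L\'evy representation, only the $v'$-mapping lemma. The paper's route buys the opposite: no new vague-convergence lemma is needed, since everything is funneled through tools already established in Section 2. Your C-L cross-check is also viable, but as you yourself note, extending the functional equation from integer $n$ to all real $t>0$ requires an extra argument, which is exactly what the $\lfloor nt\rfloor$ device in both your main route and the paper's proof supplies from the start.
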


\begin{proof}
Since $(a_n)_{n \in \mathbb{N}} \in \textnormal{RV}(-1/\beta)$ and $(b_n)_{n \in \mathbb{N}} \in \textnormal{RV}(-1/\alpha)$ in Theorem \ref{gemvageKonvergenz} we know that $\diag(a_n,b_n) \in \textnormal{RV}(-E)$ in the sense of Definition 4.2.8 of \cite{thebook}. Observe that
\begin{align*}
\mathcal{L}(P_{\big(a_n\sum_{i=1}^{\lfloor nt \rfloor} W_i, b_n \bigvee_{i=1}^{\lfloor nt \rfloor} J_i\big)})(\xi,x)&=(\mathcal{L}(P_{(a_nW_i,b_nJ_i)})^{\frac{\lfloor nt \rfloor}{n}})^{n}(\xi,x)\\
&\hspace{5mm}\xrightarrow[n \rightarrow \infty]{}\mathcal{L}(P_{(D,A)})^{t}(\xi,x),
\end{align*}
so that
\begin{align*}
P_{(a_n\sum_{i=1}^{\lfloor nt \rfloor} W_i, b_n \bigvee_{i=1}^{\lfloor nt \rfloor} J_i)} \xrightarrow[n \rightarrow \infty]{w} P_{(D,A)}^{t} \sim [0,0,t \cdot \eta],
\end{align*}
where $P_{(D,A)}^{t}$ is for $t>0$ defined as the distribution which C-L transform is given by $\mathcal{L}(P_{(D,A)})^{t}(\xi,x)$ and hence has the L\'evy representation $[0,0,t \cdot \eta]$.\\ On the other hand we get using   $a_na_{\lfloor nt \rfloor}^{-1}\rightarrow t^{1/\beta}$ and $b_nb_{\lfloor nt \rfloor}^{-1} \rightarrow t^{1/\alpha}$ as $n \rightarrow \infty$ that
\begin{align*}
P_{(a_n\sum_{i=1}^{\lfloor nt \rfloor} W_i, b_n \bigvee_{i=1}^{\lfloor nt \rfloor} J_i)}&=P_{\big(a_na_{\lfloor nt \rfloor}^{-1}a_{\lfloor nt \rfloor}\sum_{i=1}^{\lfloor nt \rfloor} W_i,b_nb_{\lfloor nt \rfloor}^{-1}b_{\lfloor nt \rfloor}\bigvee_{i=1}^{\lfloor nt \rfloor} J_i\big)}\\
&\hspace{5mm}\xrightarrow[n \rightarrow \infty]{w} P_{t^{E}(D,A)} \sim [0,0,t^{E}\eta].
\end{align*}
Because of the uniqueness of the L\'evy-Khintchine representation the assertion follows.
\end{proof}
One of our aims was to describe possible limit distributions that can appear as limits of the sum and the maximum of i.i.d.\ random variables. We call these limit distributions sum-max stable. Due to the harmonic analysis tools in Section 2 we have a method to describe sum-max infinite divisible distributions, namely by the L\'evy-Khintchine representation (see Theorem \ref{TheoremLKD}). The sum-max stable distributions are a special case of sum-max infinite divisible distributions and the next theorem describes the sum-max stable distributions by a representation of its L\'evy measure.

\begin{theorem}(Representation of the L\'evy measure) \label{TheoremLevyMaDarst} \\
Under the assumptions of Theorem \ref{gemvageKonvergenz}, there exist constants  $C\geq 0, K>0$ and a probability measure $\omega \in \mathcal{M}^1(\mathbb{R})$ with $\omega(\Rplus)>0$ and $\int_{0}^{\infty} x^{\alpha} \omega(dx)<\infty$ such that the L\'evy measure $\eta$ of $P_{(D,A)}$ on $\Rplushochzwei$ is given by
\begin{align}
\eta(dt,dx)=\epsilon_{0}(dt) C \alpha x^{-\alpha-1} dx + \ind{(0,\infty) \times \raum} (t,x)\big(t^{\beta / \alpha}\omega\big)(dx) K \beta t^{-\beta-1} dt. \label{LevyMaDarst2}
\end{align}
\end{theorem}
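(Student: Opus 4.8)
The plan is to exploit the scaling relation $t^{E}\eta=t\cdot\eta$ with $E=\diag(1/\beta,1/\alpha)$ established in Corollary \ref{levymaprop}, which says precisely that $\eta$ is homogeneous of operator-stable type under the group $\{t^{E}=\diag(t^{1/\beta},t^{1/\alpha}):t>0\}$. Since each such map preserves the max-axis $\{0\}\times(0,\infty)$ and its complement $\{t>0\}$, I would first split $\eta=\eta_{0}+\eta_{+}$, where $\eta_{0}:=\eta|_{\{t=0\}}$ and $\eta_{+}:=\eta|_{\{t>0\}}$, and note that the restriction of a homogeneous measure to an invariant set is again homogeneous, so $t^{E}\eta_{0}=t\cdot\eta_{0}$ and $t^{E}\eta_{+}=t\cdot\eta_{+}$ hold separately. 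The first term of \eqref{LevyMaDarst2} will come from $\eta_{0}$ and the second from $\eta_{+}$ (the sum-axis $(0,\infty)\times\{0\}$ being absorbed into $\eta_{+}$ as the atom of $\omega$ at $0$).

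For $\eta_{0}$, which lives on $\{0\}\times(0,\infty)$ where $t^{E}$ acts only as $x\mapsto t^{1/\alpha}x$, I would set $g(a):=\eta_{0}(\{0\}\times(a,\infty))$ and read off from $t^{E}\eta_{0}=t\cdot\eta_{0}$ the Cauchy-type relation $g(t^{-1/\alpha}a)=t\,g(a)$; solving it gives $g(a)=C\,a^{-\alpha}$ with $C:=g(1)\ge 0$, i.e.\ $\eta_{0}=\epsilon_{0}(dt)\,C\alpha x^{-\alpha-1}dx$, which is the first term. For $\eta_{+}$ the crucial observation is that its first marginal is already pinned down: putting $y=\infty$ in the L\'evy--Khintchine representation of Theorem \ref{TheoremLKD} shows that $\pi_{1}(\eta)(dt)=\eta(dt,[0,\infty])$ restricted to $(0,\infty)$ is the L\'evy measure of the first component $D$, and since $D$ is strictly $\beta$-stable with $0<\beta<1$ this marginal equals $K\beta t^{-\beta-1}dt$ for some $K>0$ (with zero drift, consistent with $P_{(D,A)}\sim[0,0,\eta]$). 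I would then disintegrate $\eta_{+}(dt,dx)=K\beta t^{-\beta-1}dt\,\kappa_{t}(dx)$ with a probability kernel $(\kappa_{t})_{t>0}$, which exists because this marginal is $\sigma$-finite on $(0,\infty)$.

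The heart of the argument is to extract the scaling of the kernel. Testing $t^{E}\eta_{+}=t\cdot\eta_{+}$ against functions $f(t,x)$ and substituting $t'=u^{1/\beta}t$ turns the homogeneity into the identity $M_{u^{1/\alpha}}\kappa_{u^{-1/\beta}t'}=\kappa_{t'}$ for a.e.\ $t'$, where $M_{c}$ denotes push-forward under $x\mapsto cx$; after the reparametrisation $v=u^{1/\beta}$, $s=u^{-1/\beta}t'$ this reads $\kappa_{vs}=M_{v^{\beta/\alpha}}\kappa_{s}$. Fixing a single good base point $s_{0}$ and setting $\omega:=M_{s_{0}^{-\beta/\alpha}}\kappa_{s_{0}}$ then yields $\kappa_{t}=M_{t^{\beta/\alpha}}\omega=(t^{\beta/\alpha}\omega)$ for a.e.\ $t$, which is exactly the kernel in \eqref{LevyMaDarst2}, and since both sides are measures this a.e.\ identity upgrades to the stated representation of $\eta_{+}$. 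I expect the cleanup of these almost-everywhere statements — passing from the Fubini-type a.e.\ relation in $(u,t')$ to a genuine scaling law for a measurable version of the kernel — to be the main technical obstacle, together with the separate treatment of the degenerate max-axis, where the operation is maximum rather than addition and the operator-stable polar decomposition of \cite{thebook} does not apply verbatim.

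Finally I would verify the side conditions. Applying the L\'evy-measure bound \eqref{levymass}, namely $\eta(\Rplus\times(y,\infty])<\infty$ for $y>0$, to the representation and computing by Fubini that $\eta_{+}(\Rplus\times(y,\infty])=K y^{-\alpha}\int_{0}^{\infty}x^{\alpha}\,\omega(dx)$ forces $\int_{0}^{\infty}x^{\alpha}\,\omega(dx)<\infty$; matching the full tail $\eta(\Rplus\times(y,\infty])=\bigl(C+K\int_{0}^{\infty}x^{\alpha}\,\omega(dx)\bigr)y^{-\alpha}$ against $-\log F_{A}(y)$ confirms consistency with the $\alpha$-Fr\'echet law of $A$. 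Here $K>0$ follows from non-degeneracy of $D$, $C\ge 0$ is automatic, and $\omega(\Rplus)>0$ (i.e.\ $\omega$ is not concentrated at the origin) records that the bulk part genuinely charges the max-coordinate, which together with the non-degeneracy of $A$ completes the proof.
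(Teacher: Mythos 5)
Your proposal is correct (the a.e.\ cleanup you flag is doable by exactly the Fubini argument you sketch, since the final object is the measure $\eta_+$, for which almost-everywhere equality of kernels suffices), but your handling of the bulk part is genuinely different from the paper's. The paper never disintegrates and never invokes the stable marginal of $D$: it works with the warped wedges $T(r,B_1)=\{(t,t^{\beta/\alpha}x):t>r,\ x\in B_1\}$, which form a $\cap$-stable generator of $\mathcal{B}((0,\infty)\times\raum)$, notes $T(r,B_1)=r^{\beta E}T(1,B_1)$ so that the scaling property of Corollary \ref{levymaprop} gives $\eta_1(T(r,B_1))=r^{-\beta}\eta_1(T(1,B_1))$, then defines $K:=\eta_1(T(1,\raum))>0$ and $\omega(B_1):=K^{-1}\eta_1(T(1,B_1))$ and checks that the candidate measure $(t^{\beta/\alpha}\omega)(dx)\,K\beta t^{-\beta-1}dt$ assigns the same mass to every $T(r,B_1)$; uniqueness of measures on a $\pi$-system finishes the argument. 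That approach buys freedom from all the measure-theoretic overhead you identify (existence of the disintegration, the $(u,t')$-null-set bookkeeping, the choice of a good base point $s_0$), and the $t^{-\beta-1}$ density falls out of the two-dimensional homogeneity alone. What your route buys is conceptual transparency: identifying the first marginal of $\eta_+$ with the L\'evy measure $K\beta t^{-\beta-1}dt$ of the strictly $\beta$-stable $D$ is immediate from $\Psi(s,\infty)$ and uniqueness of the subordinator representation, and the kernel relation $\kappa_t=M_{t^{\beta/\alpha}}\omega$ makes the meaning of $\omega$ explicit as the law of $x/t^{\beta/\alpha}$ under the normalized L\'evy measure. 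Your treatments of the axis part agree in substance (your direct solution of $g(t^{-1/\alpha}a)=t\,g(a)$ needs no regularity since it holds for all $t,a>0$, whereas the paper routes through the max-stable CDF $F(y)=\exp(-\eta_2(y,\infty))$ and cites Resnick), and your verification of $\int_0^\infty x^\alpha\,\omega(dx)<\infty$ from the tail condition \eqref{levymass} matches the paper's. One small blemish: your parenthetical reading of $\omega(\Rplus)>0$ as ``$\omega$ is not concentrated at the origin'' is off — $\omega=\epsilon_0$ satisfies $\omega(\Rplus)>0$ and is precisely the case of independent limits (Corollary \ref{UnabhFrechetFall}); in your construction the condition holds trivially because the Fr\'echet left endpoint is $x_0=0$, so $\eta$ lives on $\Rplus\times[0,\infty]$ and hence $\kappa_{s_0}$, and with it $\omega$, is supported in $[0,\infty)$.
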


\begin{proof}
First we define two measures
\begin{align*}
\eta_1\left((r,\infty) \times B_1 \right):=\eta\left((r,\infty) \times B_1\right) \text{ and } \eta_2(B_2):=\eta(\left\{0\right\} \times B_2)
\end{align*}
for all Borel sets $B_1 \in \mathcal{B}(\mathbb{\raum})$, $B_2 \in \mathcal{B}((0,\infty))$ and $r>0$.
The L\'evy measure $\eta$ on $\Rplushochzwei\backslash\left\{(0,0)\right\}$ of the limit distribution $P_{(D,A)}$ can then be represented by
\begin{align*}
\eta(dt,dx)=\epsilon_{0}(dt)\eta_2(dx)+\ind{(0,\infty)\times \raum}(t,x) \eta_1(dt,dx).
\end{align*}
With Corollary \ref{levymaprop} we get for all $t>0$ setting $E=\textnormal{diag}(1/\beta,1/\alpha)$
\begin{align}
t \cdot \eta_2(B_2)=(t^{E}\eta)(\left\{0\right\} \times B_2) =\eta(\left\{0\right\} \times t^{-1/\alpha} B_2)=(t^{1/\alpha}\eta_2)(B_2). \label{BeweisLevyMaDarst1}
\end{align}
The measure $\eta_2$ is a L\'evy measure of a probability distribution on the semigroup $(\Rplus,\vee)$. If $\eta_2\not \equiv 0$, there exists a distribution function F on $\Rplus$, such that $F(y)=\exp(-\eta_2(y,\infty))$ $\text{ for all } y>0$ . From \eqref{BeweisLevyMaDarst1} it follows that
\begin{align*}
F(y)^t=F(t^{-1/\alpha}y) \text{ for all } t>0 \text{ and } y>0.
\end{align*}
Hence it follows (see proof of Proposition 0.3. in \cite{resnick}) that
$F(y)=\exp(-C y^{-\alpha})$ with $C>0$ for all $y>0$. So the measure $\eta_2$ on $\mathcal{B}(\left(0,\infty\right))$ is given by
\begin{align}\label{Darstellungvoneta2}
\eta_2(dx)=C \alpha x^{-\alpha-1} dx.
\end{align}
The measure $\eta_2$ can also be the zero measure and so $\eta_2$ has the representation \eqref{Darstellungvoneta2} with $C\geq 0$. We still have to show that $\eta_1$ has the representation
\begin{align*}
\eta_1(dt,dx)=(t^{\beta / \alpha}\omega)(dx)K\beta t^{-\beta -1} dt.
\end{align*} 
For $B_1 \in \mathcal{B}(\Rplus)$ and $r>0$ we define the set
\begin{align*}
T(r,B_1):=\left\{(t,t^{\beta / \alpha} x) : t>r, x \in B_1 \right\}.
\end{align*} 
All sets of this form are a $\cap$-stable generator of $\mathcal{B}( (0,\infty) \times \raum)$. This follows because the map $(t,x) \rightarrow (t,t^{\beta / \alpha} x)$ is a homeomorphism from $(0,\infty) \times \raum$ onto itself. Furthermore we have $T(r,B_1)=r^{\beta E} T(1,B_1)$ with $E=\textnormal{diag}(1/\beta,1/\alpha)$ and so we get with equation \eqref{skalproperty} that
\begin{align}
\eta_1(T(r,B_1))=\eta_1(r^{\beta E} T(1,B_1)=(r^{-\beta E} \eta_1)(T(1,B_1))=r^{-\beta}\cdot \eta_1(T(1,B_1)). \label{BeweisLevyMaDarst2}
\end{align}
Additionally we get for any probability measure $\omega$ on $\rr$ and a constant $K>0$
\begin{align}
\int_{T(r,B_1)} (t^{\beta / \alpha} \omega) (dy) K \beta t^{-\beta -1} dt &=\int_{r}^{\infty}\int_{t^{\beta / \alpha}B_1} (t^{\beta/\alpha}\omega)(dy)K\beta t^{-\beta -1}dt \nonumber \\
&=\int_{r}^{\infty} \omega(B_1) K \beta t^{-\beta -1} dt=\omega(B_1)Kr^{-\beta}. \label{BeweisLevyMaDarst3}
\end{align}
We define $\omega(B_1):= \frac{1}{K} \eta_1(T(1,B_1))$ where $K$ is given by $K:=\eta_1(T(1,\raum))>0$, since $\eta_1\not \equiv 0$, because of non-degeneracy and the fact that $T(1,\raum)$ is bounded away from zero. It then follows with \eqref{BeweisLevyMaDarst2} and \eqref{BeweisLevyMaDarst3} that
\begin{align*}
\eta_1(T(r,B_1))=r^{-\beta} \eta_1(T(1,B_1))=\omega(B_1) r^{-\beta}K=\int_{T(r,B_1)}(t^{\beta/\alpha}\omega)(dy) K \beta t^{-\beta -1} dt
\end{align*}
for all $r>0$ and $B_1 \in \mathcal{B}(\raum)$. Altogether it follows that the L\'evy measure has the representation \eqref{LevyMaDarst2}. Since $\eta_1$ is a L\'evy measure on $(\Raum,\plusmax)$, it necessarily satisfies condition \eqref{levymass} so that for all $y>0$ we have $\eta_1(\rr_+\times(y,\infty)<\infty$. Using the above established representation of $\eta_1$, a simple calculation shows that this is equivalent to $\int_0^\infty x^\alpha\,\omega(dx)<\infty$. This concludes the proof.
\end{proof}

With the next theorem we are able to construct random vectors which are in the sum-max domain of attraction of particular sum-max stable distributions. A random variable $W$ is in the strict domain of normal attraction of a $\beta$-stable random variable $D$ (short: $W \in \text{DONA}_S(D)$) if one can choose the normalizing constant $a_n=n^{-1/\beta}$. That means we have
\begin{align*}
n^{-1/\beta} S(n) \underset{n \rightarrow \infty}{\Longrightarrow} D. 
\end{align*}
\begin{theorem}\label{RückLevyMaDarst}\ \\
Let $(W_i)_{i \in \mathbb{N}}$ be a sequence of i.i.d. $\Rplus$-valued random variables with $W\overset{d}{=}W_i$ and $W\in \textnormal{DONA}_S(D)$, where $D$ is strictly $\beta$-stable with $0<\beta<1$ and $E\left[e^{-sD}\right]=\exp\left(-K \Gamma(1-\beta)s^{\beta}\right)$ with $ K>0$ for $s \geq 0$.  \eqref{AnnahmeEinleitung}.
Further $(\bar{J_i})_{i \in \mathbb{N}}$ are i.i.d. $\mathbb{R}$-valued random variables with 
\begin{align}
P\left(\bar{J_i} \in B_2 | W_i=t\right)=(t^{\beta / \alpha} \omega) (B_2) \ \forall B_2 \in \mathcal{B}(\mathbb{R}),
\end{align}
where $\omega$ is a probability measure on $\mathbb{R}$ with $\omega(\Rplus)>0$ and $\int_{0}^{\infty} x^{\alpha} \omega(dx)<\infty$.
Then the sequence $(W_i,\bar{J_i})_{i \in \mathbb{N}}$ fulfils \eqref{FrechetDOA1}
with $a_n=n^{-1/\beta}$, $b_n=n^{-1 / \alpha}$ and a limit distribution $P_{(D,A)}$ which L\'evy measure $\eta$ has the form \eqref{LevyMaDarst2} with $C=0$.\\
Furthermore, if we choose i.i.d. $(\tilde{J_i})_{i \in \mathbb{N}}$ with $P(\tilde{J_i} \leq x)=\exp(-Cx^{-\alpha})$ with $C>0$ for all $x>0$ and such that $(W_i,\bar{J_i})$ and $\tilde{J_i}$ are independent for all $i \in \mathbb{N}$, and we define $J_i$ by $J_i:=\tilde{J_i} \vee \bar{J_i}$, then $(W_i,J_i)_{i \in \mathbb{N}}$ fulfil \eqref{FrechetDOA1} with $a_n=n^{-1 / \beta}, b_n=n^{-1 / \alpha}$ and a limit distribution $P_{(D,A)}$ which L\'evy measure has the representation \eqref{LevyMaDarst2} with $C>0$.
\end{theorem}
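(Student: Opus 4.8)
The plan is to obtain the first statement from the triangular-array criterion, Theorem~\ref{Konvergenzsatz2}, applied to $\mu_n:=P_{(a_nW,b_n\bar J)}$ with $a_n=n^{-1/\beta}$, $b_n=n^{-1/\alpha}$. Setting $X:=\bar J/W^{\beta/\alpha}$, the hypothesis $P(\bar J\in B_2\mid W=t)=(t^{\beta/\alpha}\omega)(B_2)$ says exactly that $X$ is independent of $W$ with law $\omega$, so $\bar J=W^{\beta/\alpha}X$. Condition (b) of Theorem~\ref{Konvergenzsatz2} involves only the first marginal $\mu_n(\cdot,\Rpmu)=P_{a_nW}$ and coincides with the truncated-mean condition for $n^{-1/\beta}S(n)\Longrightarrow D$; it therefore holds by hypothesis, since $W\in\textnormal{DONA}_S(D)$. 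Thus everything reduces to verifying condition (a), i.e.\ $n\mu_n\vKonvergenzninfty\eta_1$, where $\eta_1$ is the second summand of \eqref{LevyMaDarst2}.

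The computation of this vague limit is made transparent by the fact that this choice of norming gives $b_n/a_n^{\beta/\alpha}=1$, so that $b_n\bar J=(a_nW)^{\beta/\alpha}X$ identically. With the homeomorphism $h(t,x)=(t,t^{\beta/\alpha}x)$ of $(0,\infty)\times\raum$ already used in the proof of Theorem~\ref{TheoremLevyMaDarst}, this reads $h^{-1}(a_nW,b_n\bar J)=(a_nW,X)$, so on the relevant region the image of $n\mu_n$ under $h^{-1}$ is the product measure $(n\,P_{a_nW})\otimes\omega$. Since $W\in\textnormal{DONA}_S(D)$ yields $n\,P_{a_nW}\xrightarrow{v}\phi_D$ with $\phi_D(dt)=K\beta t^{-\beta-1}\,dt$ (the L\'evy measure read off from $\mathbb E[e^{-sD}]=\exp(-K\Gamma(1-\beta)s^\beta)$), taking the product with the fixed measure $\omega$ and pushing forward by the continuous map $h$ gives $n\mu_n\xrightarrow{v}h(\phi_D\otimes\omega)$, whose restriction to the second coordinate $x>x_0=0$ is exactly $\eta_1$. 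Concretely, on the $\cap$-stable generator $T(r,B_1)=\{(t,t^{\beta/\alpha}x):t>r,\ x\in B_1\}$ one gets
\[
n\mu_n\bigl(T(r,B_1)\bigr)=n\,P\{W>rn^{1/\beta}\}\,\omega(B_1)\longrightarrow Kr^{-\beta}\omega(B_1)=\eta_1\bigl(T(r,B_1)\bigr),
\]
matching the representation derived in the proof of Theorem~\ref{TheoremLevyMaDarst}. (If $\omega$ charges $(-\infty,0)$ those jumps have max-level at or below $x_0$ and only augment the first-coordinate structure of $D$, so they do not affect the max-part of the L\'evy measure.)

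The main obstacle is upgrading this convergence, which a priori holds only on sets bounded away from the line $\{t=0\}$, to the full $v'$-convergence demanded by Theorem~\ref{Konvergenzsatz2}, i.e.\ to sets merely bounded away from the origin $(0,0)$; the delicate point is to exclude mass escaping onto $\{0\}\times(y,\infty]$ for $y>0$, and this is exactly where $\int_0^\infty x^\alpha\,\omega(dx)=:M<\infty$ enters. Because the upper tail of $W$ is regularly varying of index $-\beta$, there is a constant $C''$ with $P\{W>v\}\le C''v^{-\beta}$ for all $v>0$, hence $n\,P\{a_nW>r\}\le C''r^{-\beta}$ uniformly in $n,r$. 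Using $b_n\bar J=(a_nW)^{\beta/\alpha}X$, Fubini, and this bound gives, for fixed $y>0$,
\[
\limsup_{n\to\infty} n\,P\{a_nW\le\delta,\ b_n\bar J>y\}\le C''y^{-\alpha}\int_{y\delta^{-\beta/\alpha}}^{\infty}x^\alpha\,\omega(dx)\xrightarrow[\delta\downarrow0]{}0 ,
\]
and the same computation yields $\eta_1\bigl((0,\delta]\times(y,\infty]\bigr)\to0$, so no mass accumulates near $\{t=0\}$ and the two measures agree in the $v'$-limit. Together with (b), Theorem~\ref{Konvergenzsatz2} then gives $\mu_n^{\circledast n}\wKonvergenz P_{(D,A)}\sim[0,0,\eta_1]$, which is \eqref{FrechetDOA1} with L\'evy measure \eqref{LevyMaDarst2} for $C=0$.

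For the second statement I would use the semigroup structure directly. Since $J_i=\tilde J_i\vee\bar J_i$ and the family $(W_i,\bar J_i)$ is independent of $(\tilde J_i)$,
\[
\bigl(a_nS(n),b_nM(n)\bigr)=\Bigl(a_n\textstyle\sum_{i=1}^nW_i,\ b_n\bigvee_{i=1}^n\bar J_i\Bigr)\plusmax\Bigl(0,\ b_n\bigvee_{i=1}^n\tilde J_i\Bigr)=:U_n\plusmax V_n ,
\]
with $U_n$ and $V_n$ independent. By the first part $U_n\Longrightarrow(D,A_1)\sim[0,0,\eta_1]$, while a direct computation shows $b_n\bigvee_{i=1}^n\tilde J_i$ has law $\exp(-Cx^{-\alpha})$ for every $n$, so $V_n\eqd(0,A_2)$ with $(0,A_2)\sim[0,0,\eta_2]$ and $\eta_2(dt,dx)=\epsilon_0(dt)\,C\alpha x^{-\alpha-1}\,dx$. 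Applying the factorization $\mathcal L(P_{U_n\plusmax V_n})=\mathcal L(P_{U_n})\,\mathcal L(P_{V_n})$ of Proposition~\ref{EigenschaftenC-L}(b) together with the Continuity Theorem~\ref{Stetigkeitssatz}, the C-L transforms converge to $\mathcal L(P_{(D,A_1)})\,\mathcal L(P_{(0,A_2)})$, whose C-L exponent is $\Psi_1+\Psi_2$; by uniqueness of the representation in Theorem~\ref{TheoremLKD} the limit is $\plusmax$-infinitely divisible with L\'evy measure $\eta_1+\eta_2$, i.e.\ \eqref{LevyMaDarst2} with $C>0$.
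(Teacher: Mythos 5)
Your proof is correct and follows essentially the same route as the paper's: both reduce the first claim to the vague convergence $n\,P_{(a_nW,\,b_n\bar J)}\to\eta_1$ (the paper via Theorem \ref{Theoremjointsum/max}, you via its underlying criterion Theorem \ref{Konvergenzsatz2} plus the truncated-mean condition, which the paper also delegates to the classical stable DOA theorem), both exploit the representation $\bar J=W^{\beta/\alpha}X$ with $X\sim\omega$ independent of $W$, and both control the delicate region near $\{t=0\}$ with exactly the same two ingredients --- the DONA tail bound $P(W>t)\le Mt^{-\beta}$ and $\int_0^\infty x^\alpha\,\omega(dx)<\infty$ (the paper through dominated convergence on rectangles $(r,\infty)\times(x,\infty)$, you through a uniform negligibility estimate on $\{a_nW\le\delta\}$). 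Your handling of the $C>0$ case by factoring the C-L transform of $U_n\plusmax V_n$ and invoking uniqueness of the L\'evy representation is a transform-side rewording of the paper's continuous-mapping argument for the independent pair, so the two proofs coincide in substance.
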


\begin{proof}
We first consider the case $C=0$. In view of Theorem \ref{Theoremjointsum/max} it is enough to show that for any continuity set $B\in \mathcal{B}(\Rplushochzwei)$ with $\text{dist}((0,0),B)>0$ we have 
\begin{align}\label{kkk}
n \cdot P_{(n^{-1/\beta} W, n^{-1/\alpha} \bar{J})}(B) \xrightarrow[n \rightarrow \infty]{} \eta_1(B),
\end{align}
where $\eta_1$ is given by \eqref{LevyMaDarst2} with $C=0$.
First let $r>0$ and $x\geq 0$. Then we get
\begin{align*}
n  P_{(n^{-1/\beta} W, n^{-1/\alpha} \bar{J})}((r,\infty) \times (x,\infty)) &=n\cdot P(W>n^{1/\beta}r,\bar{J} >n^{1/\alpha}x)\\
&=n \int_{0}^{\infty} P(\bar{J} > n^{1/\alpha} x | W=t) \ind{(r,\infty)}(n^{-1/\beta} t) P_W(dt) \\
&= n  \int_{0}^{\infty} (t^{\beta/\alpha} \omega ) ( n^{1/\alpha}x,\infty)  \ind{(r,\infty)}(n^{-1/\beta} t) P_W(dt) \\
&= n  \int_{r}^{\infty} (t^{\beta/\alpha} \omega ) ( x,\infty ) P_{n^{-1/\beta}W}(dt)  \\
&\Konvergenzninfty \int_{r}^{\infty} (t^{\beta/\alpha} \omega ) (x,\infty) ) K \beta t^{-\beta-1} dt \\ & =\eta_1((r,\infty) \times (x,\infty)),
\end{align*}
where the last step follows from Proposition 1.2.20 in \cite{thebook}, since the set $(r,\infty)$ is bounded away from zero and furthermore the map $t \rightarrow (t^{\beta/\alpha}\omega)(x,\infty)$ is continuous and bounded.
On the other hand, for $r\geq 0$ and $x>0$ we get
\begin{align*}
n  P_{(n^{-1/\beta} W, n^{-1/\alpha} \bar{J})}(  (r,\infty)\times (x,\infty) )&= n P(W>n^{1/\beta}r,\bar{J} > n^{1/\alpha}x)\\
&= n \int_r^\infty (t^{\beta/\alpha} \omega ) (x,\infty) P_{n^{-1/\beta}W}(dt) \\
&= n\int_0^\infty P(n^{-1/\beta}W>\max(r, (u/x)^{-\alpha/\beta}))\,\omega(du) .
\end{align*}
Observe that
\begin{multline*}
 nP(n^{-1/\beta}W>\max(r,(u/x)^{-\alpha/\beta}))\\  \to \int_{\max(r,(u/x)^{-\alpha/\beta})}^\infty K\beta t^{-\beta-1}\,dt=K\max(r,(u/x)^{-\alpha/\beta})^{-\beta},
 \end{multline*}
 as $n\to\infty$. Moreover, since $W\in \textnormal{DONA}_S(D)$ we know that there exists a constant $M>0$ such that $P(W>t)\leq Mt^{-\beta}$ for all $t>0$. Hence
 \[ nP(n^{-1/\beta}W>\max(r,(u/x)^{-\alpha/\beta})\leq nP(W>n^{1/\beta}(u/x)^{-\alpha/\beta})\leq Mx^{-\alpha}u^\alpha .\]
Since by assumption $\int_0^\infty u^\alpha\,\omega(du)<\infty$, dominated convergence yields
\begin{multline*}
n  P_{(n^{-1/\beta} W, n^{-1/\alpha} \bar{J})}(  (r,\infty)\times (x,\infty) ) \\  \to \int_0^\infty K \max(r,(u/x)^{-\alpha/\beta})^{-\beta}\omega(du)=\eta_1((r,\infty)\times(x,\infty))
\end{multline*}
as $n\to\infty$ again. Hence we have shown, that for $r,x\geq 0$ with $\max(x,r)>0$ we have 
\[ n  P_{(n^{-1/\beta} W, n^{-1/\alpha} \bar{J})}(  (r,\infty)\times (x,\infty) )\to \eta_1((r,\infty)\times(x,\infty))
\quad\text{as $n\to\infty$,} \]
which implies \eqref{kkk}.
In view of Theorem \ref{Theoremjointsum/max} we therefore have 
\begin{align*}
(n^{-1/\beta} \sum_{i=1}^{n} W_i, n^{-1/\alpha}\bigvee_{i=1}^{n} \bar{J_i}) \WKonvergenz (D,\bar{A})
\end{align*}
and the L\'evy measure $\eta_{1}$ of $(D,\bar{A})$ is given by \eqref{LevyMaDarst2} with $C=0$.\\ If we now choose  a sequence of i.i.d. and $\alpha$-Fr\'echet distributed random variables $(\tilde{J_i})_{i \in \mathbb{N}}$ with $P(\tilde{J_i}\leq x):=\exp(-Cx^{-\alpha})$ which are independent of $(W_i,\bar{J_i})$ it follows
\begin{align*}
[(0, n^{-1/\alpha} \bigvee_{i=1}^{n} \tilde{J}_i),(n^{-1/\beta} \sum_{i=1}^{n} W_i,n^{-1/\alpha}\bigvee_{i=1}^{n} \bar{J}_i)] \WKonvergenz [(0,\tilde{A}),(D,\bar{A})].
\end{align*}
The distribution of $(0,\tilde{A})$ has the L\'evy measure $\eta_1(dt,dx)=\epsilon_{0}(dt)C\alpha x^{-\alpha-1}$. Since $(W_i,\bar{J_i})$ and $\tilde{J_i}$ are independent, the random vectors $(0,\tilde{A})$ and $(D,\bar{A})$ are also independent. With the continuous mapping theorem applied to the semigroup operation $\plusmax$ it then follows that
\begin{align*}
(n^{-1/\beta} \sum_{i=1}^{n} W_i, n^{-1/\alpha}\bigvee_{i=1}^{n} J_i) \WKonvergenz \left(D,A\right)
\end{align*}
where $A:=\tilde{A} \vee \bar{A}$. Hence the the L\'evy measure of the distribution of $(D,A)$ is $\eta:=\eta_1+\eta_2$ and thus has the representation in \eqref{LevyMaDarst2} with $C>0$.
\end{proof}
The next Corollary characterizes the case of asymptotic independence i.e.\ $D$ and $A$ are independent.
\begin{cor}\label{UnabhFrechetFall}\ \\
The random variables $A$ and $D$ in Theorem \ref{Theoremjointsum/max} are independent if and only if in the L\'evy representation in \eqref{LevyMaDarst2} we have $C>0$ and $\omega=\epsilon_0$.
\end{cor}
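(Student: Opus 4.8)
The plan is to characterize independence of $D$ and $A$ through the factorization of the C-L transform, which by Proposition \ref{EigenschaftenC-L}(b) and (c) is equivalent to an additive splitting of the C-L exponent $\Psi$. Concretely, $D$ and $A$ are independent if and only if
\[
\mathcal{L}(P_{(D,A)})(s,y)=\mathcal{L}(P_{(D,A)})(s,\infty)\cdot\mathcal{L}(P_{(D,A)})(0,y)
\]
for all $(s,y)$, which upon taking $-\log$ becomes $\Psi(s,y)=\Psi(s,\infty)+\Psi(0,y)$. So the first step is to translate this product condition into a condition on the L\'evy measure $\eta$ via the L\'evy--Khintchine representation \eqref{LKDkorollarFormel}.

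Next I would plug the explicit representation \eqref{LevyMaDarst2} into $\Psi$. Writing $\eta=\eta_2\otimes\varepsilon_0$-part plus the $\eta_1$-part, the term coming from $\eta_2(dx)=C\alpha x^{-\alpha-1}dx$ sits on $\{t=0\}$ and therefore contributes only to $\Psi(0,y)$ (the pure-max part), since $e^{-st}=1$ there. The interesting term is the one from $\eta_1$. Computing $\Psi(s,y)-\Psi(s,\infty)-\Psi(0,y)$ isolates exactly the mixed contribution; independence holds if and only if this difference vanishes identically. I expect the difference to reduce to an integral of the form
\[
\int_{(0,\infty)}\int_{(y,\infty)} e^{-st}\,(t^{\beta/\alpha}\omega)(dx)\,K\beta t^{-\beta-1}\,dt,
\]
i.e.\ the part of $\eta_1$ that genuinely couples a positive waiting time $t>0$ with a jump exceeding $y$. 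The claim then is that this vanishes for all $s>0$ and $y$ precisely when $\omega$ puts no mass on $(0,\infty)$, which together with $\omega(\rr_+)>0$ forces $\omega=\varepsilon_0$; and when $\omega=\varepsilon_0$ one must still have $C>0$ to keep $A$ non-degenerate (otherwise $\eta_2\equiv0$ and, with all jump mass at $x=0$ conditional on $t>0$, the second marginal degenerates).

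For the forward direction I would argue that if $\omega=\varepsilon_0$ and $C>0$, then $\eta_1$ is supported on $(0,\infty)\times\{0\}$, so $\ind{[0,y]}(x)=1$ on the support of $\eta_1$ for every $y>0$; hence the $\eta_1$-term of $\Psi$ depends only on $s$, giving the desired additive split $\Psi(s,y)=(\text{function of }s)+(\text{function of }y)$ and thus independence. For the converse, assuming independence forces the mixed integral above to vanish for all $s>0,\,y\geq 0$; choosing $y>0$ and using that $t\mapsto(t^{\beta/\alpha}\omega)(y,\infty)$ is nonnegative together with the strict positivity of the density $K\beta t^{-\beta-1}e^{-st}$ on $(0,\infty)$ forces $(t^{\beta/\alpha}\omega)(y,\infty)=0$ for Lebesgue-a.e.\ $t$, hence $\omega((y,\infty))=0$ for all $y>0$, i.e.\ $\omega((0,\infty))=0$; combined with $\omega(\rr_+)>0$ this yields $\omega=\varepsilon_0$. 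Finally $C>0$ is needed because otherwise the second marginal $A$ would be degenerate, contradicting the standing non-degeneracy assumption. The main obstacle is the careful bookkeeping in the converse: justifying that vanishing of the Laplace-type integral against the strictly positive kernel really forces the inner measure to vanish a.e.\ in $t$, and then upgrading ``a.e.\ $t$'' to an honest statement about $\omega$ using the scaling structure $t^{\beta/\alpha}\omega$; the rest is a direct computation with the explicit L\'evy density.
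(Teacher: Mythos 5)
Your proposal is correct in substance, and in the direction ``independence implies $C>0$, $\omega=\epsilon_0$'' it takes a genuinely different route from the paper. The paper argues there by observing that independence of $D$ and $A$ forces the L\'evy measure to be the sum of the two marginal L\'evy measures sitting on the axes, $\eta(dt,dx)=\epsilon_0(dt)\Phi_A(dx)+\epsilon_0(dx)\Phi_D(dt)$, and then invokes the uniqueness of the L\'evy--Khintchine representation (Theorem \ref{TheoremLKD}) to match this against \eqref{LevyMaDarst2}, which yields $C>0$ and $t^{\beta/\alpha}\omega=\epsilon_0$, hence $\omega=\epsilon_0$. You instead encode independence as the functional equation $\Psi(s,y)=\Psi(s,\infty)+\Psi(0,y)$ (legitimate by Proposition \ref{EigenschaftenC-L}(b),(c)) and kill the coupling term by a positivity argument. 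Your route is more self-contained: the paper's first step (that independence forces the axis decomposition of $\eta$) is exactly the kind of statement your computation proves, but the paper leaves it implicit; on the other hand, the paper's argument is shorter once that fact and uniqueness are granted. The other direction ($C>0$, $\omega=\epsilon_0$ implies independence) is essentially identical in both treatments, and your derivation of $C>0$ from non-degeneracy of $A$ is a fine substitute for the paper's appeal to $\Phi_A$ being the Fr\'echet L\'evy measure.

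One computational correction: the difference you isolate is not the integral with kernel $e^{-st}$. Using the representation in Proposition \ref{LaplaceExponent} one finds
\[
\Psi(s,y)-\Psi(s,\infty)-\Psi(0,y)=-\int_0^\infty\bigl(1-e^{-st}\bigr)\,\bigl(t^{\beta/\alpha}\omega\bigr)\bigl((y,\infty)\bigr)\,K\beta\,t^{-\beta-1}\,dt ;
\]
the mixed integral with kernel $e^{-st}$ is a summand of $\Psi(s,y)$ itself, not of the difference. This slip is harmless for your argument: the kernel $1-e^{-st}$ is, like $e^{-st}$, strictly positive for $s,t>0$, so vanishing of the difference for all $s>0$, $y>0$ still forces $(t^{\beta/\alpha}\omega)((y,\infty))=\omega\bigl((t^{-\beta/\alpha}y,\infty)\bigr)=0$ for Lebesgue-a.e.\ $t>0$, hence $\omega((u,\infty))=0$ for all $u>0$ by monotonicity of $u\mapsto\omega((u,\infty))$, i.e.\ $\omega((0,\infty))=0$. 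Since in \eqref{LevyMaDarst2} only the restriction of $\omega$ to $\Rplus$ enters and (as constructed in the proof of Theorem \ref{TheoremLevyMaDarst}) $\omega$ is a probability measure concentrated on $\Rplus$, this gives $\omega=\epsilon_0$, completing your argument exactly as intended.
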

\begin{proof}
If $A$ and $D$ are independent, the L\'evy measure has the representation
\begin{align*}
\eta(dt,dx)= \epsilon_{0} (dt) \Phi_{A}(dx)+\epsilon_0(dx) \Phi_D(dt),
\end{align*}
where $\Phi_{A}(dx)=C \alpha x^{-\alpha-1} dx$ with $C>0$, $\alpha>0$ and $\Phi_{D}(dt)=K\beta t^{-\beta-1} dt$ with $K>0$,$0<\beta<1$. With Theorem \ref{TheoremLevyMaDarst} the L\'evy measure has the representation \eqref{LevyMaDarst2}. The uniqueness of the L\'evy measure implies that $C>0$ and $t^{\beta/\alpha}\omega=\epsilon_0$, hence we get $\omega=\epsilon_0$. Conversely, if $C>0$ and $\omega=\epsilon_0$, the L\'evy measure is given by
\begin{align*}
\eta(dt,dx)=\epsilon_{0}(dt) C \alpha x^{-\alpha-1} dx + \epsilon_0(dx) K \beta t^{-\beta -1} dt.
\end{align*}
This implies that the C-L exponent of $(D,A)$ is
\begin{align*}
\Psi(s,y)&=\int_{\Rplus^2} (1-e^{-st} \ind{[0,y]}(x)) \epsilon_{0}(dt) C\alpha x^{-\alpha-1} dx\\ &\hspace{5mm}+\int_{\Rplus^2}(1-e^{-st} \ind{[0,y]}(x)) \epsilon_{0}(dx)K\beta t^{-\beta-1}dt\\
&=-\log F_A(y)+\Psi_D(s),
\end{align*} 
which implies that $A$ and $D$ are independent.
\end{proof}

The following Proposition delivers us a representation for the C-L exponent of the sum-max stable distributions in the $\alpha$-Fr\'echet case.
\begin{prop}\label{LaplaceExponent}\ \\
The C-L exponent of the limit distribution $P_{(D,A)} \sim \left[0,0,\eta\right]$ in Theorem \ref{TheoremLevyMaDarst} is given by
\begin{align}
\Psi(s,y)=K\Gamma(1-\beta)s^{\beta}+y^{-\alpha} \left(C+ \intnu e^{-sty^{\alpha / \beta}} \omega(t^{-\beta / \alpha}, \infty) K \beta t^{-\beta-1} dt \right)  \label{DarstellungPsi}
\end{align}
for all $(s,y) \in \Rplus^2$, $y>0$.
\end{prop}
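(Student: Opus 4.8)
The plan is to insert the explicit Lévy measure \eqref{LevyMaDarst2} directly into the Lévy--Khintchine representation of the C-L exponent and evaluate the two resulting integrals. Since $P_{(D,A)}\sim[0,0,\eta]$ has left endpoint $x_0=0$ and vanishing drift $a=0$, Theorem~\ref{TheoremLKD} in the form \eqref{LKDkorollarFormel} gives, for every $y>0$,
\[
\Psi(s,y)=\int_{\Rplushochzwei}\bigl(1-e^{-st}\ind{[0,y]}(x)\bigr)\,\eta(dt,dx).
\]
First I would split this according to the two summands of \eqref{LevyMaDarst2}, writing $\Psi=I_1+I_2$, where $I_1$ integrates against $\epsilon_0(dt)\,C\alpha x^{-\alpha-1}dx$ and $I_2$ against $\ind{(0,\infty)\times\raum}(t,x)\,(t^{\beta/\alpha}\omega)(dx)\,K\beta t^{-\beta-1}dt$.

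The term $I_1$ is immediate: on the support $t=0$ the factor $e^{-st}$ equals $1$, so the integrand collapses to $\ind{(y,\infty)}(x)$ and
\[
I_1=\int_y^\infty C\alpha x^{-\alpha-1}\,dx=Cy^{-\alpha},
\]
which already produces the isolated $Cy^{-\alpha}$ contribution in \eqref{DarstellungPsi}.

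For $I_2$ I would use the push-forward identity $\int g(x)\,(t^{\beta/\alpha}\omega)(dx)=\int g(t^{\beta/\alpha}u)\,\omega(du)$, where integration over $x\in\raum$ corresponds to $u\geq 0$, together with the decomposition $1-e^{-st}\ind{[0,y]}(x)=(1-e^{-st})+e^{-st}\ind{(y,\infty)}(x)$. The first piece integrates $\omega$ to its total mass $\omega(\raum)=1$ and leaves the scalar integral $\intnu(1-e^{-st})K\beta t^{-\beta-1}\,dt=K\Gamma(1-\beta)s^\beta$, the standard stable computation obtained e.g.\ by differentiating in $s$. The second piece uses $\ind{(y,\infty)}(t^{\beta/\alpha}u)=\ind{\{u>yt^{-\beta/\alpha}\}}$ and yields $\intnu e^{-st}\,\omega\bigl((yt^{-\beta/\alpha},\infty)\bigr)K\beta t^{-\beta-1}\,dt$.

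The last, slightly fiddly, step is the substitution $t=\tau y^{\alpha/\beta}$ in this remaining integral: under it one has $yt^{-\beta/\alpha}=\tau^{-\beta/\alpha}$ and $t^{-\beta-1}\,dt=y^{-\alpha}\tau^{-\beta-1}\,d\tau$, so the integral factors as $y^{-\alpha}\intnu e^{-s\tau y^{\alpha/\beta}}\omega\bigl((\tau^{-\beta/\alpha},\infty)\bigr)K\beta\tau^{-\beta-1}\,d\tau$, exactly matching the $y^{-\alpha}$-bracket in \eqref{DarstellungPsi}. Collecting $I_1$ and $I_2$ then gives the claimed formula. The points requiring most care are the Fubini/push-forward bookkeeping (justified by $\int_0^\infty x^\alpha\,\omega(dx)<\infty$ together with the integrability conditions \eqref{levymass}, ensuring all integrals are finite for $y>0$) and the normalisation $\omega(\raum)=1$ arising from the construction of $\omega$ in Theorem~\ref{TheoremLevyMaDarst}; it is precisely this normalisation that turns the first piece into the clean coefficient-free $K\Gamma(1-\beta)s^\beta$, and it is the step I would verify most carefully.
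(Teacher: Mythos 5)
Your proposal is correct and follows essentially the same route as the paper's own proof: splitting $\eta$ into its two additive parts, reducing the Fr\'echet part to $Cy^{-\alpha}$, decomposing $1-e^{-st}\ind{[0,y]}(x)=(1-e^{-st})+e^{-st}\ind{(y,\infty)}(x)$ so that the normalisation of $\omega$ yields $K\Gamma(1-\beta)s^{\beta}$, and finishing with the substitution $t=\tau y^{\alpha/\beta}$. The details, including the Jacobian bookkeeping $t^{-\beta-1}\,dt=y^{-\alpha}\tau^{-\beta-1}\,d\tau$, match the paper's computation exactly.
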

\begin{proof}
For the proof we look at the two additive parts of the L\'evy measure in \eqref{LevyMaDarst2} separately. For the first part we get
\begin{align*}
\Psi_1(s,y):=&\int_{\Rplus^2}\left(1-e^{-st}\ind{\left[0,y\right]}(x)\right)\epsilon_{0}(dt)C\alpha x^{-\alpha-1} dx\\
=&\int_{0}^{\infty} \ind{(y,\infty)} C \alpha x^{-\alpha -1} dx = Cy^{-\alpha}.
\end{align*}
For the second part we compute
\begin{align*}
\Psi_2(s,y):=&\int_{\Rplus^2}\left(1-e^{-st}\ind{\left[0,y\right]}(x)\right)(t^{\beta/\alpha} \omega) (dx) K \beta t^{-\beta -1} dt \\
=&\int_{0}^{\infty}\left(1-e^{-st}\right)K \beta t^{-\beta -1} dt+ \int_{\Rplus^2} e^{-st} \ind{(y,\infty)}(x) (t^{\beta/\alpha} \omega)(dx) K \beta t^{-\beta-1} dt\\
=&K\Gamma(1-\beta)s^{\beta}+\int_{0}^{\infty} e^{-st} \omega(t^{-\beta/\alpha}y,\infty) K \beta t^{-\beta-1} dt\\
=&K \Gamma(1-\beta)s^{\beta}+y^{-\alpha} \int_{0}^{\infty}e^{-s u y^{\alpha/\beta}}\omega(u^{-\beta/\alpha},\infty) K \beta u^{-\beta-1} du.
\end{align*}
The C-L exponent $\Psi$ of the limit distribution $P_{(D,A)}$ is $\Psi(s,y)=\Psi_1(s,y)+\Psi_2(s,y)$ and this corresponds to \eqref{DarstellungPsi}. 
\end{proof}

After analysing the $\alpha$-Fr\'echet case above, we now consider the general case, where $A$ in \eqref{AnnahmeEinleitung} can have any extreme value distribution. As before, let $x_0\in [-\infty,\infty)$ denote the left endpoint of $F_A$. Furthermore let $x_1$ denote the right endpoint of $F_A$.

\begin{theorem}\label{gencase}
Let $(W,J), (W_i,J_i)_{i\in\N}$ be i.i.d. $\rr_+\times\rr$ valued random vectors. Furthermore let $(D,A)$ be $\rr_+\times\rr$ valued with non-degenerate marginals. Then the following are equivalent:
\begin{enumerate}
\item[(a)] There exist sequences $(a_n),(b_n),(c_n)$ with $a_n,b_n>0$ and $c_n\in\rr$ such that
\begin{equation}\label{eqP1}
\bigl(a_nS(n),b_n\bigl(M(n)-c_n\bigr)\bigr) \WKonvergenz (D,A) ,
\end{equation}
that is $(W,J)\in \textnormal{sum-max-DOA}(D,A)$.
\item[(b)] There exist sequences $(a_n),(b_n),(c_n)$ with $a_n,b_n>0$ and $c_n\in\rr$ such that
\begin{equation}\label{eqP2}
n\cdot P_{(a_nW,b_n(J-c_n))}\vKonvergenzninfty \eta, 
\end{equation}
where $\eta$ is a L\'evy measure on $(\Raum,\plusmax)$.
\end{enumerate}
Then $(D,A)$ is sum-max stable and has L\'evy representation $[x_0,0,\eta]$.
\end{theorem}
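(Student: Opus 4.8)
The plan is to reduce Theorem \ref{gencase} to the already-established $\alpha$-Fr\'echet case, Theorem \ref{Theoremjointsum/max}, by the standard device of transforming the second coordinate to standard $1$-Fr\'echet margins. First I would observe that, since the maximum commutes with any increasing affine map, $b_n(M(n)-c_n)=\bigvee_{i=1}^n b_n(J_i-c_n)$, so that \eqref{eqP1} is literally the statement $\mu_n^{\circledast n}\WKonvergenz P_{(D,A)}$ with $\mu_n:=P_{(a_nW,b_n(J-c_n))}$; the centering $c_n$ is thereby absorbed into the row distribution $\mu_n$ and causes no separate difficulty. Projecting \eqref{eqP1} onto each coordinate and invoking the classical one-dimensional theories shows that $D$ is strictly $\beta$-stable with $0<\beta<1$ (forced by non-negativity) and that $A$ has an extreme value distribution with CDF $G=F_A$ and left endpoint $x_0$.

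Next, following \cite{resnick}, p.~265, I would introduce the increasing homeomorphism $\psi(x):=-1/\log G(x)$ from the interior of the support of $A$ onto $(0,\infty)$, which maps $A$ to a standard $1$-Fr\'echet variable $\tilde A:=\psi(A)$, and set $\tilde J_i:=\psi(J_i)$. Since $\psi$ is increasing, $\psi(M(n))=\bigvee_{i=1}^n\tilde J_i$, and the convergence-of-types argument of classical extreme value theory linearizes the affine normalization into a pure scaling: there is a sequence $\tilde b_n>0$ such that $\tilde b_n\,\psi(b_n^{-1}x+c_n)\to\psi(x)$ for $x$ in the interior of the support of $A$, whence \eqref{eqP1} is equivalent to the $1$-Fr\'echet statement $(a_nS(n),\tilde b_n\bigvee_{i=1}^n\tilde J_i)\WKonvergenz(D,\tilde A)$. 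To this transformed pair I would apply Theorem \ref{Theoremjointsum/max} with $\alpha=1$: the transformed version of (a) is equivalent to $n\cdot P_{(a_nW,\tilde b_n\tilde J)}\vKonvergenzninfty\tilde\eta$ for a L\'evy measure $\tilde\eta$ on $(\Raum,\plusmax)$ with left endpoint $0$.

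Finally I would transport the vague convergence back through $\psi$. The map $(t,y)\mapsto(t,\psi^{-1}(y))$ is a homeomorphism of the relevant state spaces that sends sets bounded away from the origin $(0,0)$ to sets bounded away from $(0,x_0)$, so the pushforward $\eta$ of $\tilde\eta$ is again a L\'evy measure (now with left endpoint $x_0$) and $n\cdot P_{(a_nW,b_n(J-c_n))}\vKonvergenzninfty\eta$, which is exactly \eqref{eqP2}; running the same homeomorphism in the other direction gives the converse, so (a) $\iff$ (b). That $(D,A)$ is sum-max stable is then immediate from the characterization of sum-max stable laws as the non-degenerate limits in \eqref{AnnahmeEinleitung}, and transporting the Fr\'echet L\'evy representation $[0,0,\tilde\eta]$ of $(D,\tilde A)$ back through $\psi$ (the drift in the sum coordinate remaining $0$, as $\psi$ touches only the second coordinate) yields $P_{(D,A)}\sim[x_0,0,\eta]$. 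The truncated-mean (small-jump) condition needed along the way involves only the first coordinate $P_{a_nW}$ and is handled verbatim as in the proof of Theorem \ref{Theoremjointsum/max} using $W\in\DOA_S(D)$.

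I expect the main obstacle to be the second step, namely making the normalizing-constant correspondence and the endpoint bookkeeping rigorous uniformly across the three extreme value types. One must verify that $\psi$ indeed linearizes $b_n(\,\cdot\,-c_n)$ to a scaling and identify the matching sequence $\tilde b_n$, and one must track the left endpoint carefully when $x_0=-\infty$ (Gumbel) or $x_0$ is finite (Weibull, and the Gumbel case with finite lower support), checking in particular the hypothesis of Theorem \ref{Konvergenzsatz2} that the left endpoints of the row distributions $\mu_n$ either converge to $x_0$ or lie below it. By contrast, everything pertaining to the sum coordinate is untouched by the transformation and carries over directly from the $\alpha$-Fr\'echet case.
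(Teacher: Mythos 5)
Your reduction hinges on the claim that there exist constants $\tilde b_n>0$ with $\tilde b_n\,\psi(b_n^{-1}x+c_n)\to\psi(x)$, where $\psi=-1/\log F_A$ is built from the \emph{limit} law. This claim is false in general, and with it the transformation step collapses. Two counterexamples: (i) let $J$ be standard normal, so $A$ is Gumbel, $\psi(x)=e^x$, and $b_n\to\infty$; then $\tilde b_n\psi(b_n^{-1}x+c_n)=\tilde b_n e^{c_n}e^{x/b_n}$, whose $x$-dependence dies out because $e^{x/b_n}\to 1$, so no choice of $\tilde b_n$ can produce the limit $e^x$. Equivalently, $\bigvee_{i\le n}\tilde J_i=e^{M(n)}$ admits no non-degenerate limit under pure scaling, since $e^{J}$ is lognormal and hence again in the Gumbel, not the Fr\'echet, max-domain of attraction. (ii) Let $J$ be uniform on $[0,1]$, so $A$ is Weibull with right endpoint $0$; then $F_A(J_i)=1$ a.s., so $\tilde J_i=\psi(J_i)=+\infty$ a.s. and the transformed data are not even real-valued. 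The underlying error is a conflation of two different transformations: $\psi=-1/\log F_A$ does standardize the \emph{limit} ($\psi(A)$ is exactly $1$-Fr\'echet, which is why the paper legitimately uses $\Gamma=1/(-\log F_A)$ in Theorem \ref{levymeasgen}, where it is applied to $(D,A)$ itself), but Resnick's standardization of the \emph{data} uses $J$'s own distribution function, $U=1/(-\log F_J)$. For that choice, $n^{-1}U(b_n^{-1}x+c_n)\to\psi(x)$ does follow from $F_J^n(b_n^{-1}x+c_n)\to F_A(x)$, and your argument could be repaired with $\tilde J_i:=U(J_i)$ and $\tilde b_n:=1/n$ --- modulo a further point you gloss over: the passage between \eqref{eqP2} and its Fr\'echet counterpart is then through the $n$-dependent maps $(t,x)\mapsto\bigl(t,n^{-1}U(b_n^{-1}x+c_n)\bigr)$, not through one fixed homeomorphism, so you need a lemma that vague convergence is preserved under locally uniformly convergent homeomorphisms (plus care when $F_J$ has atoms).

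Note also that this is not the route the paper takes for this theorem: the paper proves Theorem \ref{gencase} by the same triangular-array argument as Theorem \ref{Theoremjointsum/max}, with no marginal transformation at all. Your own first observation already puts you there: since the maximum commutes with increasing affine maps, \eqref{eqP1} reads $\mu_n^{\circledast n}\Rightarrow P_{(D,A)}$ with $\mu_n=P_{(a_nW,b_n(J-c_n))}$, which is exactly the setting of Theorem \ref{Konvergenzsatz2}; what remains is the left-endpoint bookkeeping for $b_n(J-c_n)$ and the truncated-mean condition, both handled as in the Fr\'echet case. Had you stopped after that observation and invoked Theorem \ref{Konvergenzsatz2} directly, the proof would have been both shorter and correct.
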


\begin{proof}
The proof is similar to the proof of Theorem \ref{Theoremjointsum/max} and left to the reader.
\end{proof}

As in the $\alpha$-Fr\'echet case it is also possible to describe the L\'evy measure $\eta$ in \eqref{eqP2} in the general case.

\begin{theorem}\label{levymeasgen}
Under the assumptions of Theorem \ref{gencase}, there exist constants $C\geq 0, K>0$ and a probability measure $\omega\in\mathcal M^1(\rr)$ with $\omega(\rr_+)>0$ and $\int_0^\infty x\,\omega(dx)<\infty$ such that the L\'evy measure of $(D,A)$ on $\rr_+\times [x_0,x_1]\setminus\{(0,x_0)\}$ is given by
\begin{equation}\label{eqoooo}
\eta(dt,dx)=\varepsilon_0(dt)C\Gamma(x)^{-2}\Gamma'(x)dx+ \ind{(0,\infty) \times (x_0,x_1)} (t,x)\bigl(\Gamma^{-1}(t^\beta\omega)\bigr)(dx)\ K\beta t^{-\beta-1}dt ,
\end{equation}
where $\Gamma(x)=1/(-\log F_A(x))$.
\end{theorem}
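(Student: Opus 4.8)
The plan is to reduce the general statement to the $\alpha$-Fr\'echet case of Theorem \ref{TheoremLevyMaDarst} by means of the marginal transformation $\Gamma(x)=1/(-\log F_A(x))$, which is a strictly increasing homeomorphism of $(x_0,x_1)$ onto $(0,\infty)$ with $\Gamma(x_0)=0$, and which sends $A$ to a $1$-Fr\'echet variable since $P(\Gamma(A)\le y)=F_A(\Gamma^{-1}(y))=\exp(-1/y)$. The first point I would establish is that $(D,\Gamma(A))$ is again sum-max stable, now with a $1$-Fr\'echet second marginal. By Theorem \ref{gencase} the vector $(D,A)$ is sum-max stable, so for each $k$ there are $\tilde a_k,\tilde b_k>0$, $\tilde c_k\in\rr$ with $(D_1,A_1)\plusmax\cdots\plusmax(D_k,A_k)\eqd(\tilde a_k^{-1}D,\tilde b_k^{-1}A+\tilde c_k)$. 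Since $\Gamma$ is increasing it commutes with the maximum, and the max-stability relation $F_A(z)^k=F_A(\tilde b_k(z-\tilde c_k))$ yields, upon taking $-1/\log$, the key identity $\Gamma(\tilde b_k^{-1}y+\tilde c_k)=k\,\Gamma(y)$. Applying the deterministic map $(\id,\Gamma)$ to the stability identity then gives $(D_1,\Gamma(A_1))\plusmax\cdots\plusmax(D_k,\Gamma(A_k))\eqd(\tilde a_k^{-1}D,k\,\Gamma(A))$, so $(D,\Gamma(A))$ is sum-max stable with $D$ strictly $\beta$-stable and $\Gamma(A)$ being $1$-Fr\'echet.

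Next I would apply Theorem \ref{TheoremLevyMaDarst} with $\alpha=1$ to $(D,\Gamma(A))$. This produces constants $C\ge 0$, $K>0$ and a probability measure $\omega$ with $\omega(\rr_+)>0$ and $\int_0^\infty x\,\omega(dx)<\infty$ such that the L\'evy measure $\tilde\eta$ of $(D,\Gamma(A))$ is, via \eqref{LevyMaDarst2} specialized to $\alpha=1$,
\[ \tilde\eta(dt,dy)=\varepsilon_0(dt)\,C\,y^{-2}\,dy+\ind{(0,\infty)\times\raum}(t,y)\,(t^{\beta}\omega)(dy)\,K\beta t^{-\beta-1}\,dt. \]

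The remaining and most structural step is to transfer $\tilde\eta$ back to $\eta$, for which the cleanest route is through the C-L exponents rather than the random vectors. Writing $\varphi,\tilde\varphi$ for the C-L transforms of $(D,A)$ and $(D,\Gamma(A))$, we have $\tilde\varphi(s,y)=\Exp[e^{-sD}\ind{\{\Gamma(A)\le y\}}]=\varphi(s,\Gamma^{-1}(y))$, hence $\tilde\Psi(s,y)=\Psi(s,\Gamma^{-1}(y))$ for the associated exponents. Substituting the L\'evy--Khintchine representation \eqref{LKD} with $a=0$ and changing variables $u=\Gamma(x)$ inside the integral turns $\ind{[x_0,\Gamma^{-1}(y)]}(x)$ into $\ind{[0,y]}(u)$, so that $\tilde\Psi$ is precisely the exponent attached to the pushforward $\Theta_*\eta$, where $\Theta(t,x)=(t,\Gamma(x))$. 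By the uniqueness part of Theorem \ref{TheoremLKD} this forces $\tilde\eta=\Theta_*\eta$, equivalently $\eta=\Theta^{-1}_*\tilde\eta$ with $\Theta^{-1}(t,y)=(t,\Gamma^{-1}(y))$.

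Finally I would read off \eqref{eqoooo} by pulling $\tilde\eta$ back under $\Theta^{-1}$: the substitution $y=\Gamma(x)$, $dy=\Gamma'(x)\,dx$ sends the absolutely continuous part $Cy^{-2}dy$ to $C\Gamma(x)^{-2}\Gamma'(x)\,dx$, while the conditional kernel $(t^\beta\omega)(dy)$ pushes forward under $\Gamma^{-1}$ to $\bigl(\Gamma^{-1}(t^\beta\omega)\bigr)(dx)$ and the range $(0,\infty)$ of $y$ becomes $(x_0,x_1)$; this is exactly \eqref{eqoooo}, and the integrability and positivity of $\omega$ are inherited verbatim from the $\alpha=1$ instance. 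I expect the main obstacle to lie not in any probabilistic content --- which is fully supplied by the reduction and by Theorem \ref{TheoremLevyMaDarst} --- but in the bookkeeping of this change of variables, in particular verifying that $\Gamma$ is a diffeomorphism on the interior of the support for each of the three extreme-value types (so that $\Gamma'$ exists and the density transforms as stated) and that the boundary behaviour $\Gamma(x_0)=0$ is compatible with the $y^{-2}$ singularity at the origin.
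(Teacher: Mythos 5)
Your proposal is correct and follows essentially the same route as the paper's proof: reduce to the $1$-Fr\'echet case via $\Gamma$, invoke Theorem \ref{TheoremLevyMaDarst} for $(D,\Gamma(A))$, relate the C-L exponents by $\Psi(s,y)=\tilde\Psi(s,\Gamma(y))$, and identify the L\'evy measure of $(D,A)$ as an image measure by uniqueness of the L\'evy--Khintchine representation. The only differences are cosmetic --- you push $\eta$ forward under $(t,x)\mapsto(t,\Gamma(x))$ and then invert, whereas the paper directly writes $\eta$ as the image of $\tilde\eta$ under $(t,x)\mapsto(t,\Gamma^{-1}(x))$, and you supply the sum-max stability of $(D,\Gamma(A))$ in detail where the paper merely says ``observe.''
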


\begin{proof}
Observe that $(D,\Gamma(A))$ is sum-max stable where $\Gamma(A)$ is $1$-Fr\'echet. In view of Theorem \ref{TheoremLevyMaDarst}  the L\'evy measure $\tilde\eta$ of $(D,\Gamma(A))$ has the representation
\begin{equation}\label{eqss1}
\tilde\eta(dt,dx)=\varepsilon_0(dt)Cx^{-2}dx + \ind{(0,\infty) \times \rr_+} (t,x)\bigl(t^\beta\omega\bigr)(dx)\ K\beta t^{-\beta-1}dt
\end{equation}
with constants $C\geq 0,K>0$ and $\omega\in\mathcal M^1(\rr)$ with $\omega(\rr_+)>0$ and $\int_0^\infty x\,\omega(dx)<\infty$. Now let $\tilde\Psi$ denote the C-L-exponent of $(D,\Gamma(A))$. Since
\[ \mathcal L\bigl(P_{(D,A)}\bigr)(s,y)=\mathcal L\bigl(P_{(D,\Gamma(A))}\bigr)(s,\Gamma(y))=\exp\bigl(-\tilde\Psi(s,\Gamma(y))\bigr) ,\]
the C-L-exponent of $(D,A)$ is given by $\Psi(s,y)=\tilde\Psi(s,\Gamma(y))$. Setting $g(t,x)=(t,\Gamma^{-1}(x))$ we therefore get
\begin{equation*}
\begin{split}
\Psi(s,y) &= \tilde\Psi(s,\Gamma(y)) \\
&=\int_{\rr_+\times \rr_+}\bigl(1-e^{-st}\ind{[-\infty,\Gamma(y)]}(x)\bigr)\tilde\eta(dt,dx) \\
&=\int_{\rr_+\times\rr_+}\bigl(1-e^{-st}\ind{([-\infty,y]}(\Gamma^{-1}(x))\bigr)\tilde\eta(dt,dx) \\
&=\int_{\rr_+\times [x_0,x_1)}\bigl(1-e^{-st}\ind{[-\infty,y]}(x)\bigr)\, g(\tilde\eta)(dt,dx) ,
\end{split}
\end{equation*}
so $g(\tilde\eta)$ is the L\'evy measure of $(D,A)$. Using \eqref{eqss1} it is easy to see that $g(\tilde\eta)$ has the form \eqref{eqoooo} and the proof is complete.
\end{proof}

\section{Examples}
In this section we present some examples of random vectors $(W,J)$ which are in the domain of attraction of a sum-max stable distribution and calculate the L\'evy measures of the corresponding limit distributions as well as the C-L exponent, using the theory developed in section 3 above. In the following let $(W_i,J_i)_{i \in \mathbb{N}}$ be a sequence of $\Rplus \times \mathbb{R}$-valued random vectors with $(W_i,J_i)\stackrel{d}{=}(W,J)$.

\begin{example}\label{W=JBSP}
First we  consider the case of complete dependence, that is $W_i=J_i$ for all $i \in \mathbb{N}$. This is the case which was already studied in \cite{chow1978sum}. We choose $W$ to be in the strict normal domain of attraction (meaning that we have $a_n=n^{-1/\beta}$ in \eqref{AnnahmeEinleitung}) of a $\beta$-stable random variable $D$ with $0<\beta<1$ and $E\left(e^{-sD}\right)=\exp(-s^{\beta})$. The L\'evy measure of $P_D$ is given by
\begin{align}\label{lm1}
\Phi_{\beta}(dt)=\eta(dt,\Rplus)=\frac{\beta}{\Gamma(1-\beta)}t^{-\beta-1}dt.
\end{align} 
We now choose $a_n=b_n=n^{-1/\beta}$ and $\alpha=\beta$ to get
\begin{align*}
n \cdot P \Big(n^{-1/\beta} W >t, n^{-1/\beta} J > y\Big)&=n\cdot P \left(n^{-1/\beta}W > \max(t,y) \right)\\
&\Konvergenzninfty \frac{1}{\Gamma(1-\beta)}\max(t,y)^{-\beta}
\end{align*}
for $t,y>0$. Thus we know with Theorem \ref{Theoremjointsum/max} that the L\'evy measure $\eta$ is given by $\eta((t,\infty) \times (y,\infty))=\frac{1}{\Gamma(1-\beta)}\max(t,y)^{-\beta}$.
If we choose $\alpha=\beta,\ \omega=\epsilon_{1},\ K=\frac{1}{\Gamma(1-\beta)} \text{ and } C=0$  in equation \eqref{LevyMaDarst2} we as well get
\begin{align*}
&\eta((t,\infty) \times (y,\infty))=\int_{t}^{\infty}\int_{y}^{\infty} (r\epsilon_1) (dx) \frac{\beta}{\Gamma(1-\beta)}r^{-\beta-1}dr\\
&=\int_{0}^{\infty} \ind{(t,\infty)}(r) \ind{(y,\infty)}(r)\frac{\beta}{\Gamma(1-\beta)} r^{-\beta-1} dr=\frac{1}{\Gamma(1-\beta)}\max(t,y)^{-\beta}.
\end{align*}
Hence the limit distribution  in  case of total dependence is uniquely determined by $P_{(D,A)} \sim [0,0,\eta]$ with
\begin{align*}
\eta(dt,dx)=\ind{(0,\infty) \times \Rplus} \epsilon_{t}(dx) \Phi_{\beta} (dt).
\end{align*}
Setting $\alpha=\beta,\ \omega=\epsilon_{1},\ K=\frac{1}{\Gamma(1-\beta)} \text{ and } C=0$ in \eqref{DarstellungPsi}, the C-L exponent in this case is given by
\begin{align}\label{BSP1Gl1}
\Psi(s,y)=s^{\beta} + y ^{-\beta} \left( \int_1^{\infty} e^{-sty}  \frac{\beta}{\Gamma(1-\beta)} t^{-\beta -1} dt \right).
\end{align}
\end{example}

\begin{example}\label{BSPNormalvertGemKonv} \ \\
Again we choose $W$ to be in the strict normal domain of attraction of a $\beta$-stable random variable $D$ with $0<\beta<1$ and $E\left(e^{-sD}\right)=\exp(-s^{\beta})$. Furthermore let $Z$ be a standard normal distributed random variable, i.e. $Z \sim\mathcal{N}_{0,1}$ and $Z$ is independent of $W$. We define $J:=W^{1/2}Z$, hence the conditional distribution of $J$ given $W=t$ is $\mathcal{N}_{0,t}$ distributed. Define a homeomorphism $T: \Rplus \times \mathbb{R} \rightarrow \Rplus \times \mathbb{R}$ with $T(t,x)=(t,t^{1/2}x)$. Then we get for continuity sets $A \subseteq \Rplushochzwei$ that are bounded away from $\left\{(0,0)\right\}$
\begin{align*}
n \cdot P_{(n^{-1/\beta}W,n^{-1/2\beta}J)}(A)&=n \cdot P_{T(n^{-1/\beta}W,Z)}(A)\\
&=n \cdot \left(P_{n^{-1/\beta}W} \otimes P_{Z} \right) (T^{-1}(A))\\
&\Konvergenzninfty (\Phi_{\beta} \otimes \mathcal{N}_{0,1}) (T^{-1}(A)),
\end{align*}
where $\Phi_{\beta}$ is again the L\'evy measure of $D$, given by \eqref{lm1}. Hence the L\'evy measure of $(D,A)$ is  given by
\begin{align*}
\eta(dt,dx)=T\left(\Phi_{\beta} \otimes \mathcal{N}_{0,1} \right) (dt,dx)=\mathcal{N}_{0,t}(dx)\Phi_{\beta}(dt).
\end{align*}
This coincides with \eqref{LevyMaDarst2} in Theorem \ref{TheoremLevyMaDarst}, if we choose $C=0, \alpha=2\beta, \omega=\mathcal{N}_{0,1}$ and $K=\frac{1}{\Gamma(1-\beta)}$. For the C-L exponent we get with \eqref{DarstellungPsi} in Proposition \ref{LaplaceExponent}
\begin{align}\label{BSP2Gl1}
\Psi(s,y)= s^{\beta}+y^{-2\beta} \int_{0}^{\infty} e^{-sty^2} \mathcal{N}_{0,t}(1,\infty) \frac{\beta}{\Gamma(1-\beta)}t^{-\beta-1} dt.
\end{align}
\end{example}

\begin{example}\label{BSPgemeinsame3}\ \\
Again we choose $W$ to be in the strict normal domain of attraction of a $\beta$-stable random variable $D$ with $0<\beta<1$ and $E\left(e^{-sD}\right)=\exp(-s^{\beta})$. Furthermore let $Z$ be a $\gamma-$Fr\'echet distributed random variable with distribution function ${P(Z \leq t)=e^{-C_{1}t^{-\gamma}}}$ with $C_{1}>0$ and $\gamma>0$, and $Z$ is independent of $W$. We define ${J:=W^{1/\gamma}Z}$.
Let $T: \Rplus \times \mathbb{R} \rightarrow \Rplus \times \mathbb{R}$ be the homeomorphism with $T(t,x)=(t,t^{1/\gamma} x)$. We then have for all continuity sets $B \subseteq \Rplushochzwei$ bounded away from $\left\{(0,0)\right\}$
\begin{align*}
n \cdot P_{(n^{-1/\beta}W, n^{-\frac{1}{\beta \gamma}}J)} (B)&=n \cdot P_{T(n^{-1/\beta}W,Z)}(B) \nonumber \\
&=n \cdot P_{(n^{-1/\beta}W,Z)}(T^{-1}(B))\\
&=n \cdot \left( P_{n^{-1/\beta}W} \otimes P_{Z} \right) (T^{-1}(B)) \nonumber \\
& \Konvergenzninfty \left(\Phi_{\beta} \otimes P_{Z}\right) (T^{-1}(B))=T(\Phi_{\beta} \otimes P_{Z}) (B),
\end{align*}
where $\Phi_{\beta}$ denotes the L\'evy measure of $P_D$. Consequently the L\'evy measure of $(D,A)$ is given by
\begin{align*}
\eta(dt,dx)=\bigl(t^{1/\gamma} P_{Z}\bigr)(dx) \frac{\beta}{\Gamma(1-\beta)}t^{-\beta-1}dt.
\end{align*}
This coincides with Theorem \ref{TheoremLevyMaDarst} if we let $\omega=P_{Z}, \alpha=\beta\gamma$, $K=1/\Gamma(1-\beta)$ and $C=0$ in \eqref{LevyMaDarst2}. With Theorem \ref{Theoremjointsum/max} we know
\begin{align*}
(n^{-1/\beta} S(n), n^{-1/\beta \gamma} M(n)) \xLongrightarrow[n \rightarrow \infty]{} (D,A),
\end{align*}
where $D$ strictly stable with $0<\beta<1$ and $A$ is $\alpha=\beta \gamma$-Fr\'echet distributed. The condition $\int_{0}^{\infty} x^{\alpha} \omega(dx)<\infty$ is fulfilled then due to $0<\beta<1$ is $\alpha=\beta \gamma < \gamma$ and $\omega$ is $\gamma$-Fr\'echet distributed. This means that $(W,J)$ is in the sum-max domain of attraction of $(D,A)$. With Proposition \ref{LaplaceExponent} we compute the C-L exponent with $K:=1/\Gamma(1-\beta)$:
\begin{align*}
\begin{split}
\Psi(s,y)&=s^{\beta}+y^{-\beta \gamma} \int_{0}^{\infty} e^{-sty^{\gamma} } \omega(t^{-1/\gamma},\infty) \beta K t^{-\beta-1} dt\\
&=s^{\beta}+y^{-\beta \gamma} \int_{0}^{\infty}e^{-sty^{\gamma}} (1-e^{-C_{1}t}) \beta Kt^{-\beta-1} dt\\
&=s^{\beta}+y^{-\beta \gamma}\left(\int_{0}^{\infty}(e^{-sty^{\gamma}}-1)\beta Kt^{-\beta-1} dt+\int_{0}^{\infty}(1-e^{-t(sy^\gamma+C_{1})})\beta Kt^{-\beta-1} dt\right)\\
&=s^{\beta}+y^{-\beta \gamma}\left(-(sy^{\gamma})^{\beta}+(sy^{\gamma}+C_1)^{\beta}\right)\\
&=y^{-\beta \gamma}(sy^{\gamma}+C_1)^{\beta}\\
&=(s+C_{1}y^{-\gamma})^{\beta}.
\end{split}
\end{align*}
\end{example}

\section{Appendix: Proofs}
In this section we give some of the technical proofs of section 2 above.
\begin{proof}[Proof of Lemma \ref{lemmaCL}]
First we assume that $\varphi$ is the C-L transform of a probability measure $\mu \in \mathcal{M}^1(\Rplus \times \Rpmu)$. The map
\[h:(\Rplus \times \Rpmu,\plusmax) \rightarrow \boldsymbol{\hat{S}}, \ (t,x) \rightarrow e^{-t \, \LargerCdot \,} \ind{[-\infty,\, \LargerCdot \, ]}(x)\] is an injective homomorphism, where  $\bf{\hat{S}}$ is the set of all bounded semicharacters on $S=(\Rplus \times \Rpmu, \plusmin)$ in \eqref{MengeallerHG}. We get
\begin{align*}
\varphi(s,y)&=\int_{\Rplus}\int_{\Rpmu} e^{-st}\ind{[-\infty,y]}(x) \mu(dt,dx)\\
&=\int_{\hat{\textbf{S}}}\rho(s,y)h(\mu)(d\rho), \text{ for all } (s,y) \in \Rplus \times \Rkomp.
\end{align*}
Theorem 4.2.5 in \cite{Berg} implies that $\varphi$ is positive semidefinite. It is obvious that $\varphi$ is also bounded and normalized. If we put $s=0$ we get \[\varphi(0,y)=\mu(\Rplus \times [-\infty,y])\] for all $y \in \Rkomp$ and hence the distribution function of a probability measure on $\Rpmu$. Otherwise, if we put $y=\infty$ we get \[\varphi(s,\infty)=\int_{\Rplus}e^{-st}\mu(dt,\Rpmu)\] for all $s \in \Rplus$, hence the Laplace transform of a probability measure on $\Rplus$.\par \noindent
Conversely $\varphi$ is now a positive semidefinit, bounded and normalized function on $(\Rplus \times \Rkomp,\plusmin)$. Theorem 4.2.8 in \cite{Berg} implies, that there exists exactly one probability measure $\mu$ on the set of bounded semicharacters $\bf{\hat{S}}$ of the semigroup $S=(\Rplus \times \Rpmu,\plusmin)$, such that
\begin{align*}
\varphi(s,y)=\int_{\hat{\textbf{S}}}\rho(s,y)\mu(d\rho), \text{ for all } (s,y) \in \Rplus \times \Rkomp.
\end{align*}
We divide $\bf{\hat{S}}$ in \eqref{MengeallerHG} in the two disjoint subsets
\begin{align*}
\bf{\hat{S}^{'}}&=\Big\{e^{-t\LargerCdot}\ind{[x,\infty]}(\LargerCdot), x \in [-\infty,\infty], t \in [0,\infty]\Big\} \text{ and } \\ 
\bf{\hat{S}^{''}}&=\Big\{e^{-t\LargerCdot}\ind{(x,\infty]}(\LargerCdot), x \in [-\infty,\infty), t \in [0,\infty]\Big\}.
\end{align*} 
We define the isomorphisms $h_1:\Rplus \times \Rpmu \rightarrow  \bf{\hat{S}^{'}}$ and $h_2 :\Rplus \times \Ru \rightarrow  \bf{\hat{S}^{''}}$ by \[h_1(t,x):=e^{-t \, \LargerCdot \,}\ind{[x,\infty]}(\LargerCdot) \hspace{0.5cm} \text{and} \hspace{0.5cm} h_2(t,x):=e^{-t \, \LargerCdot \,}\ind{(x,\infty]}(\LargerCdot).\] Hence we get
\begin{align*}
\varphi(s,y)&=\int_{[0,\infty]}\int_{\Rpmu}e^{-st}\ind{[-\infty,y]}(x)h_1^{-1}(\mu)(dt,dx)+\int_{[0,\infty]}\int_{\Ru}e^{-st}\ind{[-\infty,y)}(x)h_2^{-1}(\mu)(dt,dx)\\
&=\int_{\Rpmu} \ind{[-\infty,y]}(x)\left\{\int_{[0,\infty)}e^{-st}h_1^{-1}(\mu)(dt,dx) + h_1^{-1}(\mu)(\left\{\infty\right\},dx) \cdot \ind{\left\{0\right\}}(s)\right\}\\
&\hspace{5mm}+\int_{\Ru} \ind{[-\infty,y)}(x)\left\{\int_{[0,\infty)}e^{-st}h_2^{-1}(\mu)(dt,dx) +h_2^{-1}(\mu)(\left\{\infty\right\},dx) \cdot \ind{\left\{0\right\}}(s)\right\}.
\end{align*}
Due to the right continuity of $\varphi(0,y)$ in $y \in \Rkomp$ there are only  two possible cases: Either $h^{-1}_2(\mu)([0,\infty]\times\left\{y\right\})=0$ for all $y \in \Rkomp$ or  $h^{-1}_2(\mu)([0,\infty] \times \, \LargerCdot) \equiv 0$. In the first case we choose $\tilde{\mu}:=h^{-1}_1(\mu)+h^{-1}_2(\mu)$. In the second case the last integral disappears and we choose $\tilde{\mu}:=h^{-1}_1(\mu)$. Since $\varphi(s,\infty)$ is continuous in $s$ it follows that $h_i^{-1}(\mu)(\left\{\infty\right\},dx)=0$ for $i=1,2$. Due to the fact that $\varphi$ is normalized, $\mu$ is a probability measure. Hence we get the desired form in \eqref{C-L-Transformierte}.
\end{proof}

\begin{proof}[Proof of Lemma \ref{gleichmKonv}]
We write
\begin{align*}
\mathcal{L}(\mu_n)(s_n,y_n)&=\int_{\Raum} e^{-s_nt} \ind{[-\infty,y_n]}(x) \mu_n(dt,dx)\\
&=\int_{\mathbb{R}_+} e^{-s_nt} \mu_n(dt,[-\infty,y_n])\\
&=\textnormal{L}(\tilde{\mu}_n)(s_n),
\end{align*}
where we define the measures $\tilde{\mu}_n(dt):=\mu_n(dt,[-\infty,y_n])$ and $\tilde{\mu}:=\mu(dt,[-\infty,y])$. The assertion follows, if we can show that
\begin{align}
\tilde{\mu}_n \wKonvergenzninfty \tilde{\mu}. \label{Konvergenzmutilde}
\end{align}
Then due to the uniform convergence of the Laplace transform it follows 
\begin{align*}
\textnormal{L}(\tilde{\mu_n}) (s_n) \xrightarrow[n \rightarrow \infty]{} \textnormal{L}(\tilde{\mu})(s).
\end{align*}
So it remains to show \eqref{Konvergenzmutilde}. Because $\mu_n \xlongrightarrow[]{w}\mu$, 
we know that
\begin{align}
\mu_n(A_1 \times A_2) \xrightarrow[n \rightarrow \infty]{} \mu(A_1 \times A_2) \label{Kompglm1}
\end{align}
for $ A_1 \times A_2 \in \mathcal{B}(\Rplus \times \Rpmu)$ with $\mu(\partial(A_1 \times A_2))=0$. Hence
\begin{align}
\mu_n(B \times [-\infty,y]) \xrightarrow[n \rightarrow \infty]{} \mu(B \times [-\infty,y]) \label{Kompglm2}
\end{align}
for all $B \in \mathcal{B}(\Rplus)$ with $\mu(\partial B \times [-\infty,y])=0$, then \eqref{Kompglm2} is fulfilled for all sets $B \times [-\infty,y]$ with $\mu(\partial(B \times [-\infty,y]))=0$. It is \[\partial(B \times [-\infty,y])=\partial B \times [-\infty,y] \cup B \times \left\{y\right\}\] and because $y$ is a point of continuity of the function $\mu(\Rplus \times [-\infty,y])$, it follows from $\mu(\partial B \times [-\infty,y])=0$ that $\mu(\partial(B \times [-\infty,y]))=0$. For a set $B \in \mathcal{B}(\Rplus)$, $\mu_n(B \times [-\infty,y])$ is an increasing, right continuous function which is continuous in $y$, and so an (improper) distribution function. But then it follows that \[\mu_n(B \times [-\infty,y_n]) \xlongrightarrow[n \rightarrow \infty]{} \mu(B \times [-\infty,y])\]
if $y_n \longrightarrow y$ for $n\rightarrow \infty$ an \eqref{Konvergenzmutilde} holds true.
\end{proof}

\begin{proof}[Proof of Lemma \ref{lemmawGrenzunendteil}]
From Theorem \ref{Stetigkeitssatz} we know that $\mathcal{L}(\mu_n)(s,y) \xlongrightarrow[]{} \mathcal{L}(\mu)(s,y)$ as $n \rightarrow \infty$ in all $(s,y) \in \Rplus \times \Rkomp$ but countable many $y\in \Rkomp$. Since the probability measures $\mu_n$ are $\plusmax$-infinite divisible there exists a measure $\mu_{m,n}$ for all $n,m \geq 1$ such that $\mu_n=\mu^{\circledast m}_{m,n}$. Because of Proposition \ref{EigenschaftenC-L} (b) and (c) this is equivalent to \[\mathcal{L}(\mu_n)(s,y)=\mathcal{L}(\mu_{m,n})^m(s,y)\] for all $(s,y) \in \Rplus \times \Rkomp$. It then follows that
\[\mathcal{L}(\mu_{m,n})(s,y)=\mathcal{L}(\mu_{n})^{1/m}(s,y) \xlongrightarrow[n \rightarrow \infty]{}  (\mathcal{L}(\mu))^{1/m}(s,y)\] 
in all $(s,y) \in \Rplus \times \Rkomp$ but countable many $y \in \Rkomp$. Since
\begin{align*}
\lim_{s \downarrow 0} \left(\mathcal{L}(\mu)\right)^{1/m}(s,\infty)=\left(\lim_{s \downarrow 0}  \int_{0}^{\infty} e^{-st} \mu(dt,\Rpmu)\right)^{1/m}=(\mu(\Raum))^{1/m}=1 
\end{align*}
it follows from Theorem \ref{Stetigkeitssatz} that there exists a measure $\nu \in \mathcal{M}^1(\Raum)$ with $\mathcal{L}(\nu)=\left(\mathcal{L}(\mu)\right)^{1/m}$.
Hence $\mu=\nu^{\circledast m}$, so $\mu$ is $\plusmax$-infinite divisible. 
\end{proof}

\begin{proof}[Proof of Lemma \ref{Konvergenzsatz1}]
In view of Lemma \ref{lemmawGrenzunendteil} we already know that $\mu$ is $\plusmax$-infinite divisible. By Theorem \ref{TheoremLKD} an Remark \ref{LKDkorollar} we know that the C-L exponent has the form 
\begin{align*}
\Psi(s,y)=a \cdot s +\int_{\Rplus}\int_{\Rpmu}\left(1-e^{-st}\cdot\ind{\left[-\infty,y\right]}(x)\right)\eta(dt,dx) \ \forall (s,y) \in \Rplus \times (x_0,\infty].
\end{align*}
First we define for any $h_1>0$ and any $h_2>x_0$ a function $\Psi^{*}$ on $\Rplus \times (x_0,\infty]$ by
\begin{align*}
\Psi^{*}(s,y)=\Psi(s+h_1,y \wedge h_2)-\Psi(s,y).
\end{align*}
For $\Psi^{*}$ we get for all $(s,y) \in \Rplus \times (x_0,\infty]$
\begin{align*}
\Psi^{*}(s,y)&=ah_1+\int_{\Rplus \times \Rpmu}(e^{-st}\ind{[-\infty,y]}(x)-e^{-(s+h_1)t}\ind{[-\infty,y \wedge h_2]}(x))\eta(dt,dx)\\
&=ah_1+\int_{\Rplus \times \Rpmu}e^{-st}\ind{[-\infty,y]}(x)(1-e^{-h_1t}\ind{[-\infty,h_2]}(x))\eta(dt,dx)\\
&=ah_1+\int_{\Rplus \times \Rpmu}e^{-st}\ind{[-\infty,y]}(x)K(t,x)\eta(dt,dx),
\end{align*}
where $K(t,x):=1-e^{-h_1t}\ind{[-\infty,h_2]}(x)$. By Taylor expansion we get for all $x \leq h_2$
\begin{align*}
K(t,x)=h_{1}t+o (t) \text{ as } t\rightarrow 0. 
\end{align*}
Now we define a measure $M$ on $\Rplus \times [x_0,\infty]$ by
\begin{align*}
&M(dt,dx):=K(t,x)\eta(dt,dx) \text{ on } \Rplus \times [x_0,\infty]\backslash \left\{(0,x_0)\right\}\\
&\hspace{5mm}\text{and }\\ 
&M(\left\{(0,x_0)\right\}):=ah_1.
\end{align*}
This is a finite measure, because for $0<\epsilon<1$ we get
\begin{align*}
&M(\Rplus \times [x_0,\infty])\\
&=ah_1+\int_{t \geq \epsilon}\int_{x_0 \leq x \leq h_2}(1-e^{-h_1t})\eta(dt,dx)+\int_{0<t<\epsilon}\int_{x_0 \leq x \leq h_2}(1-e^{-h_1t})\eta(dt,dx)\\
&\hspace{5mm}+\int_{t \geq \epsilon}\int_{x>h_2} 1 \ \eta(dt,dx)+\int_{0<t<\epsilon}\int_{x>h_2} 1 \ \eta(dt,dx) < \infty.
\end{align*}
The first integral is finite, because $\eta([\epsilon,\infty] \times [x_0,\infty]) < \infty$. The second integral is finite because of the Taylor expansion of $K$ and due to the integrability condition of $\eta$ we have that $\int_{t < \epsilon} t \ \eta(dt,[x_0,\infty]) <\infty$. The third integral is finite, since we have $\eta([\epsilon,\infty) \times (h_2,\infty])<\infty$ and the last integral due to $\eta(\Rplus \times (h_2,\infty])<\infty.$
Hence the function $\Psi^{*}$ is the C-L transform of the finite measure $M$ and therefore, in view of  Proposition \ref{EigenschaftenC-L} (c), $M$ is uniquely determined   by $\Psi^{*}$ and hence also by $\Psi$. \par
\ \\
Now we show (b). The measures $\mu_n \sim [x_n,a_n,\eta_n]$ are {$\plusmax$-infinite} divisible measures with $\mu_n \wKonvergenz \mu$ as $n \rightarrow \infty$ and ${\mu \sim [x_0,a,\eta]}$. Define $\Psi_{n}^{*}$ and $M_n$ as $\Psi^{*}$ and $M$ above,that is
\begin{align}
&M_n(dt,dx)=K(t,x) \ \eta_n(dt,dx) \\
&\hspace{5mm}\text{and } \nonumber \\
&\Psi_n^{*}(s,y)=a_nh_1+\int_{\Raum} e^{-st} \ind{[-\infty,y]}(x)\left(1-e^{-h_1 t}\ind{[-\infty,h_2]}(x)\right) \eta_n(dt,dx),
\end{align}
where $n$ is in the case $x_n \xrightarrow[]{} x_0$ if chosen large enough to ensure $x_n \leq h_2$.
In view of  the Continuity Theorem it follows that $\Psi_n(s,y) \rightarrow \Psi(s,y)$ in all $(s,y)\in \Rplus \times \Rkomp$ but countable many $y>x_0$ and hence also $\Psi_n^{*}(s,y) \rightarrow \Psi^{*}(s,y)$. Because $\Psi^{*}$ is the C-L transform of $M$, we get with Theorem \ref{Stetigkeitssatz}, that \[M_n \xlongrightarrow[n \rightarrow \infty]{w} M.\]\\
Now we choose $ S \in \mathcal{B}(\Rplus \times [h_2,\infty])$ with $\text{dist}((0,h_2),S)>0$ and $\eta(\partial S )=0$. Then is $M(\partial S)=0$ as well and it follows \[M_n(S)=\int_{S}K(t,x)  \eta_n (dt,dx ) \xrightarrow[n \rightarrow \infty]{} M(S)=\int_{S}K(t,x) \eta(dt,dx).\] Due to $K(t,x)>0$ if $t > 0$ or $x>h_2$ it follows that $\eta_n(S) \xrightarrow[]{} \eta(S)$ as $n \rightarrow \infty$. Since $h_2>x_0$ was chosen arbitrarily, it follows ${\eta_{n}} \xrightarrow[]{v'} \eta$, i.e. (b) is fulfilled. Because ${\mu_n \xrightarrow[]{w} \mu}$ implies $\Psi_n(s,y) \rightarrow \Psi(s,y)$ as $n \rightarrow \infty$ for all $(s,y)\in \mathbb{R}_{+} \times \Rkomp$ where $\Psi$ is continuous, we have $\Psi_n(s,\infty) \xrightarrow[]{} \Psi(s,\infty)$
and it follows, that $a_n \xrightarrow[]{} a$ as $n \rightarrow \infty$. Hence (a) is also fulfilled. It remains to show (c). For all but countable many $y>x_0$ we have
\begin{align*}
\Psi(s,y)&=\limn \left[a_ns + \int_{\Rplus \times \Rpmu}(1-e^{-st}\ind{[-\infty,y]}(x))\eta_n(dt,dx)\right]\\
&=as+\overbar{\limn} \left[\int_{\Rplus \times \Rpmu}(1-e^{-st})\eta_n(dt,dx)+\int_{\Rplus \times \Rpmu}e^{-st}\ind{(y,\infty)}(x)\eta_n(dt,dx)\right].
\end{align*}
We divide the first integral into two parts and get
\begin{align*}
\Psi(s,y)&=as+\lim_{\epsilon \downarrow 0}\overbar{\limn}\left[\int_{\left\{t:0 \leq t<\epsilon \right\}}(1-e^{-st})\eta_n(dt,\Rpmu)+\int_{\left\{t: t \geq \epsilon\right\}}(1-e^{-st})\eta_n(dt,\Rpmu)\right]\\
&\hspace{5mm}+\int_{\Rplus \times \Rpmu}e^{-st}\ind{(y,\infty)}(x)\eta(dt,dx)\\
&=as+\lim_{\epsilon \downarrow 0}\overbar{\limn}\left[\int_{\left\{t:0 \leq t<\epsilon \right\}}(1-e^{-st})\eta_n(dt,\Rpmu)\right]+\int_{\Rplus}(1-e^{-st})\eta(dt,\Rpmu)\\
&\hspace{5mm}+\int_{\Rplus \times \Rpmu}e^{-st}\ind{(y,\infty)}(x)\eta(dt,dx)\\
&=s\left(a+\lim_{\epsilon \downarrow 0}\overbar{\limn}\left[\int_{\left\{t:0 \leq t<\epsilon \right\}}t\ \eta_n(dt,\Rpmu)\right]\right)+\int_{\Rplus \times\Rpmu}(1-e^{-st}\ind{[-\infty,y]}(x))\eta(dt,dx).
\end{align*}
Hence (c)  also holds.\par
\ \\
Conversely we assume that (a)-(c) is fulfilled. It then follows for all $y>x_0$ with the same decomposition:
\begin{align*}
\limn \Psi_n(s,y)
&=as+ \lim_{\epsilon \downarrow 0} \overbar{\limn}\bigg[\int_{\left\{t:0<t<\epsilon \right\}} st \ \eta_n(dt,\Rpmu)\bigg]\\
&\hspace{5mm}+\int_{\Rplus \times \Rpmu}(1-e^{-st}\ind{[-\infty,y]}(x))\eta(dt,dx)\\
&=\Psi(s,y).
\end{align*}
Hence\[\mathcal{L}(\mu_n)(s,y)=e^{-\Psi_n(s,y)} \xrightarrow[n \rightarrow \infty]{}e^{-\Psi(s,y)}=\mathcal{L}(\mu)(s,y)\] for at most countable many $y>x_0$ and as a consequence it follows with Theorem \ref{Stetigkeitssatz} \[\mu_n \xrightarrow[n \rightarrow \infty]{w} \mu\] with $\mu \sim [x_0,a,\eta]$.
\end{proof}

\begin{proof}[Proof of Lemma \ref{LemmabegleitendeVerteilung}]
This probability measure is the analogue to the  compound Poisson distribution induced by the  convolution $\circledast$ which itself is induced by the semigroup operation $\plusmax$. Hence it follows similar as for the usual Poisson distribution (see \cite[Corollary 3.1.8.]{thebook}) that the C-L transform is given by $\exp(-c\left[1-\mathcal{L}(\mu)(s,y)\right])$ for all $y\geq x_0$:
\begin{align*}
\mathcal{L}(\Pi(c,\mu))(s,y)&=e^{-c} \sum_{k=0}^{\infty} \frac{c^{k}}{k!}(\mathcal{L}(\mu)(s,y))^{k}\\
&=\exp\left(-c (1-\mathcal{L}(\mu)(s,y))\right) \text{ for all } y\geq x_0.
\end{align*}
For the C-L exponent it then follows that
\begin{align*}
\Psi(s,y)&=c (1-\mathcal{L}(\mu)(s,y))\\
&=\int_{\Raum} (1-e^{-st}\ind{[-\infty,y]}(x)) (c \cdot \mu)(dt,dx).
\end{align*}
The measure $\Pi(c,\mu)$ has the same left endpoint as the measure $\mu$ and since $\mu^{\circledast 0}=\varepsilon_{(0,x_0)}$, it is obvious that $\mathcal{L}(\Pi(c,\mu))(s,y)>0$ for all $(s,y) \in \Rplus \times [x_0,\infty]$.  Hence, by Theorem \ref{TheoremLKD} the measure $\Pi(c,\mu)$ is $\plusmax$-infinite divisible with L\'evy representation $\Pi(c,\mu) \sim \left[x_0,0, c \cdot \mu \right]$.
\end{proof}

\begin{proof}[Proof of Lemma \ref{LemmabegleitendeVerteilung2}]
This also follows along the same lines as for to  for the compound Poisson distribution. With Theorem \ref{Stetigkeitssatz} and Proposition \ref{EigenschaftenC-L} (b)
$\mu_n^{\circledast n} \wKonvergenz \nu$ as $n \rightarrow \infty$ if and only if \[\mathcal{L}(\mu_n)^n(s,y) \rightarrow \mathcal{L}(\nu)(s,y) \text{ as } n \rightarrow \infty\] in all $(s,y) \in \Rplus \times \Rkomp$ but countable many $y>x_0$.
Since $\mathcal{L}(\nu)(s,y)>0$ for all $y>x_0$, this is equivalent to 
\[n \cdot \log \mathcal{L}(\mu_n)(s,y) \xrightarrow[]{} \log \mathcal{L}(\nu)(s,y) \text{ as } n \rightarrow \infty\] in all but countable many $y>x_0$. Because $\mathcal{L}(\mu_n)(s,y) \rightarrow 1$ as $n \rightarrow \infty$ for all $y>x_0$ and $\log z \sim z-1$ as $z \rightarrow 1$, this is equivalent to
\[n \cdot (\mathcal{L}(\mu_{n})(s,y)-1) \xrightarrow[]{} \log \mathcal{L}(\nu)(s,y) \text{ as } n \rightarrow \infty\] in all but countable many $y>x_0$.
And this is equivalent to
\[\exp ( n\left[\mathcal{L}(\mu_n)(s,y)-1\right]) \xrightarrow[]{} \mathcal{L}(\nu)(s,y) \text{ as } n \rightarrow \infty\] in all but countable many $y>x_0$ and because of 
Theorem \ref{Stetigkeitssatz} and Lemma \ref{LemmabegleitendeVerteilung}, this is equivalent to \[\Pi(n,\mu_n) \xrightarrow[]{w} \nu \text{ as } n \rightarrow \infty.\]
\end{proof}

\end{document}